\renewcommand{\dag}[1]{{#1}^\dagger}
\newcommand{\dagp}[1]{\bp{#1}^\dagger}
\newcommand{\Cob}{{\mathcal{C}ob}}
\newcommand{\A}{{\mathsf A}}
\newcommand{\Proj}{{\mathsf {Proj}}}
\newcommand{\Mat}{\operatorname{Mat}}
\newcommand{\lk}{\operatorname{lk}}
\newcommand{\coev}{\stackrel{\longrightarrow}{\operatorname{coev}}}
\newcommand{\ev}{\stackrel{\longrightarrow}{\operatorname{ev}}}
\newcommand{\tev}{\stackrel{\longleftarrow}{\operatorname{ev}}}
\newcommand{\tcoev}{\stackrel{\longleftarrow}{\operatorname{coev}}}
\newcommand{\brk}[1]{{\left\langle{#1}\right\rangle}}
\newcommand{\FK}{\ensuremath{\mathbb{K}} }
\newcommand{\ve}{\varepsilon}
\newcommand{\ro}{r}
\newcommand{\g}{\ensuremath{\mathfrak{g}}}
\newcommand{\La}{{\mathscr L}}
\newcommand{\X}{{\mathsf X}}
\newcommand{\Zr}{{\mathsf Z}}
\newcommand{\qr}{{q}}
\newcommand{\coh}{\omega}
\newcommand{\vp}{\varphi}
\newcommand{\V}{\mathsf{V}}
\newcommand{\VV}{\mathbb{V}}
\newcommand{\hS}{\wh{S}}
\newcommand{\Su}{\Sigma}
\newcommand{\dSu}{\wt\Sigma}
\newcommand{\dM}{\wt M}
\newcommand{\e}{{\operatorname{e}}}
\newcommand{\slt}{{\mathfrak{sl}(2)}}
\newcommand{\Uq}{{U_q\slt}}
\newcommand{\UqMed}{{\wb U_q\slt}}
\newcommand{\UsltH}{{U_q^{H}\slt}}
\newcommand{\Ubar}{{\wb U_q^{H}\slt}}
\newcommand{\unit}{\ensuremath{\mathbb{I}}}
\newcommand{\cat}{\mathscr{C}}
\newcommand{\catd}{\mathscr{D}}
\newcommand{\Id}{\operatorname{Id}}
\newcommand{\bp}[1]{{\left(#1\right)}}
\newcommand{\qn}[1]{{\left\{#1\right\}}}
\newcommand{\qN}[1]{{\left[#1\right]}}
\newcommand{\qd}{{\mathsf d}}
\newcommand{\qdim}{\operatorname{qdim}}
\newcommand{\End}{\operatorname{End}}
\newcommand{\Aut}{\operatorname{Aut}}
\newcommand{\Hom}{\operatorname{Hom}}
\newcommand{\Span}{\operatorname{Span}}
\newcommand{\ptr}{\operatorname{ptr}}
\newcommand{\C}{\ensuremath{\mathbb{C}} }
\newcommand{\Z}{\ensuremath{\mathbb{Z}} }
\newcommand{\R}{\ensuremath{\mathbb{R}} }
\newcommand{\N}{\ensuremath{\mathbb{N}} }
\newcommand{\Q}{\ensuremath{\mathbb{Q}} }
\newcommand{\wt}{\widetilde}
\newcommand{\wh}{\widehat}
\newcommand{\wb}{\overline}
\newcommand{\Cp}{{\ddot\C}}
\newcommand{\dep}{\delta}
\newcommand{\et}{{\quad\text{and}\quad}}
\renewcommand{\vec}{w}
\newcommand{\vecl}{\vec^L}%vd
\newcommand{\vecr}{\vec^R}%vu
\newcommand{\vech}{\vec^H}%vl
\newcommand{\vecs}{\vec^S}%vr
\newcommand{\md}{\operatorname{\mathsf{d}}}
\newcommand{\mt}{\operatorname{\mathsf{t}}}
\newcommand{\ob}{\operatorname{Ob}(\cat)}
\newcommand{\mathsmall}[1]{\mbox{\small$#1$}}
\newcommand{\sqr}{\operatorname{sqr}}
\newcommand{\catHe}{\cat^{\dagger}}
\newcommand{\catdHe}{\catd^{\dagger}}
\newcommand{\con}[1]{\bar{#1}^{\dagger}}
\def\cal#1{\mathcal{#1}}%
\newtheorem{theo}{Theorem}[section]
\newtheorem*{theo*}{Theorem}
\newtheorem{lemma}[theo]{Lemma}
\newtheorem{prop}[theo]{Proposition}
\newtheorem{cor}[theo]{Corollary}
\theoremstyle{definition}
\newtheorem{defi}[theo]{Definition}
\newtheorem{rem}[theo]{Remark}
\newtheorem{example}[theo]{Example}
\newtheorem{exo}[theo]{Exercise}
\theoremstyle{remark}
\newcounter{exo} \newcounter{numexercice}
\renewcommand{\theexo}{\arabic{exo}}
 \newcommand{\maps}{\colon}
\tikzstyle directed=[postaction={decorate,decoration={markings,
    mark=at position #1 with {\arrow{>}}}}]
\tikzset{middlearrow/.style={
        decoration={markings,
            mark= at position 0.5 with {\arrow{#1}} ,
        },
        postaction={decorate}
    }
}
\begin{document}
%\title{Hermitian structure on a category of modules of unrolled quantum $\slt$}
\title{A Hermitian TQFT from a non-semisimple category of quantum $\slt$-modules}

\author[N. Geer]{Nathan Geer}
\address{Mathematics \& Statistics\\
  Utah State University \\
  Logan, Utah 84322, USA}
  \email{nathan.geer@gmail.com}
\author[A.D. Lauda]{Aaron D. Lauda}
\address{Department of Mathematics\\
 University of Southern California \\
  Los Angeles, California 90089, USA}
  \email{lauda@usc.edu}
%  \thanks{Research was sponsored by the Army Research Office and was
%accomplished under Grant Number W911NF-20-1-0075. The views and conclusions contained in this
%document are those of the authors and should not be interpreted as representing the official policies, either
%expressed or implied, of the Army Research Office or the U.S. Government. The U.S. Government is authorized to reproduce and distribute reprints for Government purposes notwithstanding any copyright
%notation herein.}

%
\author[B. Patureau-Mirand]{Bertrand Patureau-Mirand}
\address{UMR 6205, LMBA,\\
  universit\'e de Bretagne-Sud, BP 573,\\
  56017 Vannes, France }
\email{bertrand.patureau@univ-ubs.fr}
\author[J. Sussan]{Joshua Sussan}
\address{Department of Mathematics\\
  CUNY Medgar Evers \\
  Brooklyn, NY 11225, USA}
  \email{jsussan@mec.cuny.edu}
  \address{Mathematics Program\\
 The Graduate Center, CUNY \\
  New York, NY 10016, USA}
  \email{jsussan@gc.cuny.edu}
\begin{abstract}
We endow a non-semisimple category of modules of unrolled quantum $\slt$ with a Hermitian structure.  We also prove that the TQFT constructed in arXiv:1202.3553 using this category is Hermitian.  This gives rise to projective representations of the mapping class group in the group of indefinite unitary matrices.
  \end{abstract}

\maketitle
\setcounter{tocdepth}{3}

\section{Introduction}

%Hermitian TQFT giving rise to indefinite inner product.  Such

Unitary topological quantum field theories  are closely related to various physical systems.  In particular, they are connected to topological phases of matter~\cite{KKR,LW,MR2608953, kirillov2011stringnet}.
There is hope that these mathematical theories could be realized physically and perhaps be used for fault tolerant quantum computing~\cite{Kit,FKLW,NKK}.   Many fundamental examples of these unitary TQFTs come from the representation theory of quantum groups at roots of unity.  The standard procedure is to perform some semisimplification on a category of representations and then use the resulting category to construct $3$-manifold invariants and their extensions to $(2+1)$-TQFTs.

If one instead works with the full representation category of a quantum group at a root of unity, without passing to the semisimplificiation, it was not obvious how to construct TQFTs, as the standard quantum trace on projective modules for quantum groups at roots of unity vanish.  The first and third authors, in collaboration with Kujawa, introduced in \cite{GKP1} a modified trace. This modified trace has the remarkable property that it does not vanish on projective objects and retains most of the important properties of the standard quantum trace.  This construction led to new link invariants, $3$-manifold invariants  \cite{CGP1}, and $(2+1)$-TQFTs \cite{BCGP1}.  We refer to these invariants collectively as \emph{non-semisimple} invariants.

Unlike the usual semisimple theory where quantum dimensions of simple objects are strictly positive, the modified dimensions of many objects in the non-semisimple theory are real, but not positive.  This means that there is no hope that the TQFT constructed in \cite{BCGP1} is unitary.  However, in this note we show that the TQFT arising from a non-semisimple category $\catdHe$ of representations of the unrolled quantum group for $\mathfrak{sl}_2$  is Hermitian.  This means that the TQFT will produce nondegenerate bilinear forms with an indefinite signature.
%However, in this note we show that the resulting TQFTs are likely to be \emph{indefinite unitary}, meaning that it produces forms of a given signature.     More specifically, we focus our attention on the TQFT arising from a non-semisimple category $\catdHe$ of representations of the unrolled quantum group for $\mathfrak{sl}_2$.  We prove that the resulting TQFT is Hermitian.

The notion of a Hermitian ribbon category was introduced by Turaev \cite{Tu}.  One of our main results is the following.
 \begin{theo*}$\catdHe$ is a \emph{Hermitian}
   ribbon category in the sense of Definition~\ref{def:Hermition}.
 \end{theo*}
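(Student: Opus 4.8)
The strategy is to exhibit the conjugation on $\catdHe$ explicitly and then verify the clauses of Definition~\ref{def:Hermition} one at a time, reducing each to a property of a single antilinear (anti)automorphism $\omega$ of the unrolled quantum group $\Ubar$ at the fixed root of unity. The point that makes everything work is that our $q$ satisfies $\bar q=q^{-1}$, so complex conjugation of scalars is precisely the operation that sends the $R$-matrix to its inverse, the ribbon element to its inverse, and the antipode to $S^{-1}$.

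First I would define an antilinear algebra map $\omega\colon\Ubar\to\Ubar$ on the generators $E,F,K^{\pm1},H$ with $\omega(q)=q^{-1}$, chosen so that $\omega^{2}=\Id$, so that $\omega$ intertwines the comultiplication with its opposite (up to the flip) and the counit with complex conjugation, and so that $\omega\circ S=S^{-1}\circ\omega$ up to conjugation by a power of $K$; the precise variance is pinned down by matching the sign conventions of Definition~\ref{def:Hermition}. That $\omega$ respects the defining relations of $\Ubar$ is a routine check on generators. Using $\omega$ one sends a weight module $V$ to the conjugate module $\overline V$, carried by the complex-conjugate vector space with $\Ubar$ acting through $\omega$, and a morphism $f\colon V\to W$ to $\overline f\colon\overline W\to\overline V$, the adjoint of $f$ with respect to the nondegenerate Hermitian forms that the objects of the $\dagger$-category $\catdHe$ are equipped with (this is exactly the extra data recorded by the superscript $\dagger$). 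Since $\omega$ is an involution compatible with the comultiplication and the counit, the assignment $\overline{(-)}$ is an antilinear, involutive, contravariant functor that is monoidal: $\overline{\overline f}=f$, $\overline{g\circ f}=\overline f\circ\overline g$, $\overline{f\otimes g}=\overline f\otimes\overline g$, and there are natural isomorphisms $\overline{V\otimes W}\cong\overline W\otimes\overline V$ and $\overline\unit\cong\unit$.

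Next I would check compatibility with the ribbon structure. Compatibility with the two dualities follows from $\omega\circ S=S^{-1}\circ\omega$ together with the behaviour of $\omega$ on the pivotal element implementing the double dual, and yields the prescribed formulas relating $\overline{b_V},\overline{d_V}$ (and their left/right partners) to the structure maps of $\overline V$. Compatibility with the braiding reduces to the identity $(\omega\otimes\omega)(R)=R_{21}^{-1}$ (or $R^{-1}$, according to the variance of $\omega$): writing the $R$-matrix of the unrolled algebra as $R=q^{H\otimes H/2}\check R$ with $\check R$ the truncated sum of divided-power monomials in $E\otimes F$, antilinearity together with $\bar q=q^{-1}$ and the chosen action of $\omega$ on $E,F,H$ turns $R$ into the required expression, so the braiding and its conjugate are mutually inverse. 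In the same way $\omega$ sends the ribbon element to its inverse, giving $\overline{\theta_V}=\theta_V^{-1}$. Finally I would record that the Hermitian forms on simple and projective objects are compatible with the modified trace of \cite{BCGP1} --- equivalently that $\overline f$ has modified trace equal to the complex conjugate of that of $f$ --- which follows from the uniqueness (up to scalar) of the modified trace on the ideal of projectives, the scalar being forced to be real by evaluating on a self-conjugate projective. Assembling these pieces gives all the clauses of Definition~\ref{def:Hermition}.

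The main obstacle is the braiding/twist computation. The factor $q^{H\otimes H/2}$ in the unrolled $R$-matrix is not an element of a completion of $\Ubar^{\otimes2}$ but a weight-dependent operator built from half-integer powers of $q$, so one has to track carefully how antilinearity interacts with these half-powers and with the chosen square root of $q$, and then arrange the resulting signs to be simultaneously consistent with the conventions of Definition~\ref{def:Hermition} for the braiding, the twist, both dualities, and the modified trace. By comparison, the algebra-relation and Hopf-compatibility verifications of the first two steps are straightforward.
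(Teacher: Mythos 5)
There is a genuine gap in your proposal: you never explain how to equip a tensor product $W_1\otimes W_2$ of Hermitian modules with a compatible Hermitian form, and this is the central technical difficulty that the paper's argument is built to solve. The naive product form $(x\otimes y,x'\otimes y')_p=(x,x')(y,y')$ is \emph{not} compatible with the dagger, because $\Delta(\dag u)=\dag{u_2}\otimes\dag{u_1}$ has a flip built in. To repair this one must intertwine $\Delta$ with $\Delta^{\mathrm{op}}$ by an \emph{involution}, and the ordinary braiding $\tau R$ is not involutive. The paper's fix is the half-twist $\dt$ and the operator $\X_{V,W}=\dt_{W\otimes V}^{-1}c_{V,W}(\dt_V\otimes\dt_W)$, which is an involution with the right coherence (Lemma~\ref{L:X}); Theorem~\ref{Th:tensor} then defines $(v,v')=(v,\tau(\X_{W_1\otimes W_2}v'))_p$ and checks both $\Ubar$-compatibility and Hermitian symmetry, the latter being a genuinely delicate computation with $R$, $\theta$, and the unipotent square-root $\sqr$. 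Your sketch asserts the conjugation functor is ``monoidal'' with isomorphisms $\overline{V\otimes W}\cong\overline W\otimes\overline V$, but offers no mechanism for passing back to a symmetric form on $V\otimes W$, and never mentions the half-twist. You correctly flag the $q^{H\otimes H/2}$ factor as a sensitive point, but you locate the difficulty in the braiding/twist verification rather than in the definition of the tensor structure itself, which is where most of the work actually lies.

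A second, smaller gap: you rely on ``the Hermitian forms on simple and projective objects'' as if both cases were handled uniformly, citing uniqueness of the modified trace to make the scalar real. But the existence argument via $\con V\simeq V^*$ combined with a normalizing scalar (Lemma~\ref{L:sesq}) only works when $\End_\cat(V)=\C\Id_V$, i.e.\ for simple $V$. The projective covers $P_i$ are not simple, and the paper needs a direct, explicit construction of the Hermitian form $(\cdot,\cdot)_{\alpha,\beta}$ on $P_i$ (Proposition~\ref{prop:form-Pi}), together with Lemma~\ref{L:indecHe} (tensoring with $\C^H_{kr}$) and the orthogonal Krull--Schmidt statement Lemma~\ref{L:catHeKS}, none of which appear in your outline. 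Your first two steps — the antilinear (anti)involution on $\Ubar$, the compatibility with $\Delta$, $S$, $R$, $\theta$ — do match the paper's Lemma on the dagger and Proposition~\ref{daggeronc}, and the modified-trace observation matches Lemma~\ref{L:dagmt}, so the beginning and end of your argument are on track; the missing middle is the existence of Hermitian forms on non-simple projectives and the half-twist construction of the tensor Hermitian structure.
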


We stress that analogous fundamental results for the semisimplified categories coming from quantum groups were achieved by Kirillov \cite{Kir96} and Wenzl \cite{Wen98}.  In order to accommodate projective objects in $\catdHe$, we needed to modify some arguments of \cite{Kir96, Wen98}.

Finally, in the last section we apply the Hermicity of $\catdHe $ to show that the TQFT constructed in \cite{BCGP1} is Hermitian in the sense of Turaev \cite{Tu}.

\begin{theo*}
The TQFT $(\VV,\mathsf{Z}) $ introduced in \cite{BCGP1} is Hermitian.
\end{theo*}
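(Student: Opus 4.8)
The plan is to combine the duality pairing $\VV(\Sigma)\otimes\VV(-\Sigma)\to\C$ that the TQFT already provides with an anti-linear isomorphism $\VV(\Sigma)\to\VV(-\Sigma)$ built from the conjugation functor on $\catdHe$, and then to verify Turaev's axioms. Recall that the TQFT of \cite{BCGP1} is obtained by a universal (skein-theoretic) construction from the renormalized closed $3$-manifold invariant $\Nr$ of \cite{CGP1}: for a decorated surface $\Sigma$, the space $\VV(\Sigma)$ is the quotient of the free $\C$-vector space on decorated cobordisms $M\colon\emptyset\to\Sigma$ by the radical of the bilinear form $\brk{M_1,M_2}=\Nr(M_1\cup_\Sigma\wb{M}_2)$, and $\Nr$ is itself assembled from the Reshetikhin--Turaev functor $F$ on the ribbon category $\catdHe$ together with the modified trace, all manifolds and surfaces carrying the usual extra data ($\C/2\Z$-cohomology classes, colored banded points, and a Lagrangian recording the anomaly). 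To prove Hermiticity in the sense of \cite{Tu} it suffices to construct, for each such $\Sigma$, an anti-linear isomorphism $\Phi_\Sigma\colon\VV(\Sigma)\to\VV(-\Sigma)$ that is an involution, is compatible with disjoint union and gluing, and is natural in cobordisms; the Hermitian form on $\VV(\Sigma)$ is then $h_\Sigma(x,y)=\brkk{x,\Phi_\Sigma(y)}$, where $\brkk{-,-}\colon\VV(\Sigma)\otimes\VV(-\Sigma)\to\C$ is the nondegenerate gluing pairing of the TQFT, and the Hermitian adjunction relation $\mathsf{Z}(-M)=\overline{\mathsf{Z}(M)}$ is exactly the naturality of $\Phi$.

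First I would upgrade Definition~\ref{def:Hermition} to the functorial statement that $F$ intertwines mirror image with conjugation: for every $\catdHe$-colored ribbon graph $T$ one has $F(\wb T)=\overline{F(T)}$, where $\wb T$ is $T$ reflected, with each color $V$ replaced by $\wb V$ and each coupon $f$ by $\wb f$, and the bar on morphisms is the conjugation functor. This follows formally from the compatibility of a Hermitian ribbon structure with duality, the inverse braiding, and the twist. The one ingredient that is \emph{not} formal is the reality of the modified trace, $\mt(\wb f)=\overline{\mt(f)}$ for $f\in\End_{\catdHe}(V)$ with $V$ projective; I would obtain this from the uniqueness (up to an overall scalar) of the modified trace on the ideal of projectives, by checking that $f\mapsto\overline{\mt(\wb f)}$ is again a right modified trace and then pinning down the scalar on a single projective generator, where it can be computed directly from the Hermitian form. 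Granting these, orientation reversal of a decorated closed $3$-manifold $M$ amounts to mirroring a surgery link and conjugating the cohomology class and all colors, so that $F(\wb T)=\overline{F(T)}$ together with the reality of the normalization factors yields $\Nr(-M)=\overline{\Nr(M)}$.

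The transfer to the TQFT is then formal. Since $\Nr(-M)=\overline{\Nr(M)}$, the map sending a generating cobordism $M$ to $\wb M$, together with complex conjugation of scalars, respects the bilinear forms $\brk{-,-}$ and hence descends to a well-defined anti-linear map $\Phi_\Sigma$; the same construction applied to $-\Sigma$ provides a two-sided inverse, so $\Phi_\Sigma$ is an anti-linear isomorphism and, under the canonical identification $-(-\Sigma)=\Sigma$, an involution. Its compatibility with disjoint union and with gluing is inherited from the corresponding properties of orientation reversal and of $\Nr$, and the naturality $\Phi_{\Sigma_1}\circ\mathsf{Z}(M)=\mathsf{Z}(\wb M)\circ\Phi_{\Sigma_0}$ for a cobordism $M\colon\Sigma_0\to\Sigma_1$ again reduces, after pairing against test cobordisms, to the identity $\Nr(-M)=\overline{\Nr(M)}$. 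Finally $h_\Sigma$ is sesquilinear by construction, nondegenerate since $\brkk{-,-}$ is nondegenerate and $\Phi_\Sigma$ is bijective, and satisfies $h_\Sigma(y,x)=\overline{h_\Sigma(x,y)}$ because $\Phi_\Sigma$ is an involution and $\brkk{-,-}$ is symmetric under interchanging $\Sigma$ with $-\Sigma$; these are precisely Turaev's axioms for a Hermitian TQFT.

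The main obstacle lies in the middle step. The identity $\mt(\wb f)=\overline{\mt(f)}$ is handled by the uniqueness argument above, but the behavior of the remaining scalar invariants of the closed theory under simultaneous mirror-reversal and conjugation must be checked by hand: one needs the global normalization factor, the stabilization coefficients $\Delta_\pm$, and the anomaly phase (a root of unity determined by a Maslov index / signature defect) each to be real or of modulus $1$, with their product transforming under $M\mapsto -M$ by complex conjugation alone. It is precisely this modulus-one anomaly term, rather than a genuine positive factor, that keeps the forms $h_\Sigma$ indefinite and lands the mapping class group representations in an \emph{indefinite} unitary group; once it is verified, the remainder is routine.
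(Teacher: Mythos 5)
Your proposal follows essentially the same route as the paper: the dagger on decorated cobordisms (mirror image plus conjugation of coupon colors), the reality of the modified trace established via uniqueness of the trace on the ideal of projectives (the paper's Lemma~\ref{L:dagmt}), the identity $\mathsf{Z}(\dM^\dagger)=\overline{\mathsf{Z}(\dM)}$ with the real/unimodular normalization factors $\lambda,\eta,\dep$ (the paper's Lemma~\ref{lem:dagofZ}), and the descent to state spaces through the universal construction. The only difference is a cosmetic reframing --- you phrase the Hermitian structure as an antilinear isomorphism $\Phi_\Sigma\colon\VV(\Sigma)\to\VV(-\Sigma)$ composed with the gluing pairing, whereas the paper defines the Hermitian pairing on $\VV(\dSu)$ directly from the bijection $\dagger\colon\Cob(\emptyset,\dSu)\to\Cob(\dSu,\emptyset)$ --- but these are equivalent.
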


In Proposition~\ref{prop:mapping} we show that this implies that the mapping class group action induced by the non-semisimple TQFT produces a projective representation in the group of indefinite unitary matrices.

Even with an indefinite normed inner product, the  Hermitian  %indefinite unitary
TQFTs defined here may have physical relevance.  Indeed, quantum mechanics with indefinite norms have been studied going back to Dirac~\cite{Dirac} and Pauli~\cite{Pauli}.  Even with the indefinite norms, they observed a formalism consistent with deterministic quantum mechanics, including positive energy eigenvalues, normalizable wave functions, and time evolution by an exponential of the Hamiltonian that is self-adjoint in the indefinite norm.  More recently, the study of pseudo-Hermitian quantum mechanics has been intensively studied~\cite{Mostafazadeh_2002,Mostafazadeh_2020}, motivated by connections to $\cal{PT}$-symmetric quantum theory~\cite{Bender_2005}.  In all of these studies, indefinite normed Hilbert spaces admit Hamiltonians that are Hermitian with respect to the indefinite inner product, yet still have real spectrum, and unitary evolution.   In forthcoming work, we will show that the TQFTs studied here give rise to topological phases fitting into this framework.

\subsection{Acknowledgements}
N.G. is supported by NSF DMS-1664387.
A.D.L. is partially supported by NSF grant DMS-1902092 and Army Research Office W911NF-20-1-0075.
J.S. is partially supported by the NSF grant DMS-1807161 and PSC CUNY Award 64012-00 52.

\section{A quantization of $\slt$ and its associated ribbon
  category}\label{S:QUantSL2H}
In this section we recall the algebra $\Ubar$ and a category of
modules over this algebra.  Fix a positive integer $r$.  Let $r'=r$ if $r$ is odd and $r'=\frac{r}{2}$ otherwise.  Let $\C$ be
the complex numbers and $\Cp=(\C\setminus \Z)\cup r\Z.$ Let
$q=e^\frac{\pi\sqrt{-1}}{r}$ be a $2r^{th}$-root of unity.  We use the
notation $q^x=e^{\frac{\pi\sqrt{-1} x}{r}}$.  For $n\in \N$, we also set
 $$\qn{x}=q^x-q^{-x},\quad\qN{x}=\frac{\qn x}{\qn1},\quad\qn{n}!=\qn{n}\qn{n-1}\cdots\qn{1}\et\qN{n}!=\qN{n}\qN{n-1}\cdots\qN{1}.$$

\subsection{The Drinfel'd-Jimbo quantum group}
Let $\Uq$ be the $\C$-algebra given by generators $E, F, K, K^{-1}$
and relations:
\begin{align}\label{E:RelDCUqsl}
  KK^{-1}&=K^{-1}K=1, & KEK^{-1}&=q^2E, & KFK^{-1}&=q^{-2}F, &
  [E,F]&=\frac{K-K^{-1}}{q-q^{-1}}.
\end{align}
The algebra $\Uq$ is a Hopf algebra where the coproduct, counit and
antipode are defined by
\begin{align}\label{E:HopfAlgDCUqsl}
  \Delta(E)&= 1\otimes E + E\otimes K,
  &\varepsilon(E)&= 0,
  &S(E)&=-EK^{-1},
  \\
  \Delta(F)&=K^{-1} \otimes F + F\otimes 1,
  &\varepsilon(F)&=0,& S(F)&=-KF,
    \\
  \Delta(K)&=K\otimes K
  &\varepsilon(K)&=1,
  & S(K)&=K^{-1}%,
.\label{E:HopfAlgDCUqsle}
\end{align}
Let $\UqMed$ be the algebra $\Uq$ modulo the relations
$E^\ro=F^\ro=0$.

 \subsection{A modified version of $\Uq$}\label{SS:UqH}  Let $\UsltH$ be the
$\C$-algebra given by generators $E, F, K, K^{-1}, H$ and
relations in \eqref{E:RelDCUqsl} along with the relations:
\begin{align*}
  HK&=KH,
& [H,E]&=2E, & [H,F]&=-2F.
\end{align*}
The algebra $\UsltH$ is a Hopf algebra where the coproduct, counit and
antipode are defined in
\eqref{E:HopfAlgDCUqsl}--\eqref{E:HopfAlgDCUqsle} and by
\begin{align*}
  \Delta(H)&=H\otimes 1 + 1 \otimes H,
  & \varepsilon(H)&=0,
  &S(H)&=-H.
\end{align*}
Define $\Ubar$ to be the Hopf algebra $\UsltH$ modulo the relations
$E^\ro=F^\ro=0$.

Let $V$ be a finite dimensional $\Ubar$-module.  An eigenvalue
$\lambda\in \C$ of the operator $H:V\to V$ is called a \emph{weight}
of $V$ and the associated eigenspace is called a \emph{weight space}.
A vector $v$ in the $\lambda$-eigenspace
of $H$ is a \emph{weight vector} of \emph{weight} $\lambda$, i.e. $Hv=\lambda
v$.  We call $V$ a \emph{weight module} if $V$ splits as a direct sum
of weight spaces and $\qr^H=K$ as operators on $V$, i.e. $Kv=q^\lambda
v$ for any vector $v$ of weight $\lambda$.  Let $\cat$ be the category
of finite dimensional weight $\Ubar$-modules.

Since $\Ubar$ is a Hopf algebra, $\cat$ is a tensor category where
the unit $\unit$ is the 1-dimensional trivial module $\C$.  Moreover,
$\cat$ is $\C$-linear: hom-sets are $\C$-modules, the composition and
tensor product of morphisms are $\C$-bilinear, and
$\End_\cat(\unit)=\C\Id_\unit$.  When it is clear, we denote the unit
$\unit$ by $\C$.  We say a module $V$ is \emph{simple} if it has no
proper submodules.
% BP
For a module $V$ and a morphism $f\in\End_\cat(V)$, we write
$\brk f_V=\lambda\in\C$ if $f-\lambda\Id_V$ is nilpotent.  If $V$ is
simple, then Schur's lemma implies that $\End_\cat(V)=\C\Id_V$. Thus for
$f\in \End_\cat(V)$, we have $f=\brk{f}_V \Id_V$.

% If $V$ is simple then Schur's lemma implies that
% $\End_\cat(V)=\C\Id_V$.  If $\End_\cat(V)=\C\Id_V$ then for
% $f\in \End_\cat(V)$ we denote $\brk{f}$ as the scalar determined by
% $f=\brk{f}\Id_V$.

We will now recall the fact that the category $\cat$ is a ribbon category.
  Let $V$ and $W$ be
objects of $\cat$.  Let $\{v_i\}$ be a basis of $V$ and $\{v_i^*\}$ be
a dual basis of $V^*=\Hom_\C(V,\C)$.  Then
\begin{align*}
  %b_{V}
  \coev_V :& \C \rightarrow V\otimes V^{*}, \text{ given by } 1 \mapsto \sum
  v_i\otimes v_i^*,  &
  % d_{V}
  \ev_V: & V^*\otimes V\rightarrow \C, \text{ given by }
  f\otimes w \mapsto f(w)
\end{align*}
are duality morphisms of $\cat$.
In \cite{Oh}, Ohtsuki truncates the usual formula of the $h$-adic
quantum $\slt$ $R$-matrix to define an operator on $V\otimes W$ by
\begin{equation}
  \label{eq:R}
  R=\qr^{H\otimes H/2} \sum_{n=0}^{\ro-1} \frac{\{1\}^{2n}}{\{n\}!}\qr^{n(n-1)/2}
  E^n\otimes F^n.
\end{equation}
where $q^{H\otimes H/2}$ is the operator given by
$$q^{H\otimes H/2}(v\otimes v') =q^{\lambda \lambda'/2}v\otimes v'$$
for weight vectors $v$ and $v'$ of weights of $\lambda$ and
$\lambda'$. The $R$-matrix is not an element in $\Ubar\otimes \Ubar$.
However the action of $R$ on the tensor product of two objects of
$\cat$ is a well defined linear map.
Moreover, $R$ gives rise to a braiding $c_{V,W}:V\otimes W
\rightarrow W \otimes V$ on $\cat$ defined by $v\otimes w \mapsto
\tau(R(v\otimes w))$ where $\tau$ is the permutation $x\otimes
y\mapsto y\otimes x$.
This braiding follows from the invertibility of the $R$-matrix.  An explicit inverse
(see \cite[Section 2.1.2]{BDGG} and \cite{Oh}) is given by
\begin{equation}
  \label{eq:Rinverse}
  R^{-1}= (\sum_{n=0}^{\ro-1} (-1)^n  \frac{\{1\}^{2n}}{\{n\}!}\qr^{-n(n-1)/2}
  E^n\otimes F^n) \qr^{-H\otimes H/2}.
\end{equation}

Let $\theta$ be the operator given by
\begin{equation}
\theta=K^{\ro-1}\sum_{n=0}^{\ro-1}
\frac{\{1\}^{2n}}{\{n\}!}\qr^{n(n-1)/2} S(F^n)\qr^{-H^2/2}E^n
\end{equation}
where $q^{-H^2/2}$ is an operator defined on a weight vector $v_\lambda$ by
$q^{-H^2/2}.v_\lambda = q^{-\lambda^2/2}v_\lambda.$
%B:
Ohtsuki shows that the family of maps $\theta_V:V\rightarrow V$ in
$\cat$ defined by $v\mapsto \theta^{-1}v$ is a twist (see
\cite{jM,Oh}).

 Now the ribbon structure on $\cat$ yields right duality morphisms
\begin{equation}\label{E:d'b'}
  \tev_{V}=\ev_{V}c_{V,V^*}(\theta_V\otimes\Id_{V^*})\text{ and }\tcoev_V =(\Id_{V^*}\otimes\theta_V)c_{V,V^*}\coev_V
\end{equation}
which are compatible with the left duality morphisms $\{\coev_V\}_V$ and
$\{\ev_V\}_V$.  These duality morphisms are given explicitly by \begin{align*}
  \tcoev_{V} \maps & \C \rightarrow V^*\otimes V, \text{ where } 1 \mapsto
  \sum v_i^* \otimes K^{r-1}v_i, \\ \tev_{V} \maps & V\otimes V^*\rightarrow
  \C, \text{ where } v\otimes f \mapsto f(K^{1-r}v).
\end{align*}
The \emph{quantum dimension} $\qdim(V)$ of an object $V$ in $\cat$ is defined by
\[
\qdim(V)= \brk{\tev_V\circ \coev_V}_\unit=\sum  v_i^*(K^{1-r}v_i) \ .
\]

 For $g\in\C/2\Z$, define
$\cat_{g}$ as the full subcategory of weight modules whose weights
are all in the class  $g$ (mod $2\Z$).
Then $\cat=\{\cat_g\}_{g\in \C/2\Z}$ is a $\C/2\Z$-graded category (where
$\C/2\Z$ is an additive group). Let $V\in\cat_g$ and $V'\in\cat_{g'}$.
Then the weights of $V\otimes V'$ are congruent to $g+g' \mod 2\Z$,
and so the tensor product is in $\cat_{g+g'}$.  Also, if $g\neq g'$
 then $\Hom_\cat(V, V')=0$ since morphisms in $\cat$  preserve weights.
Finally, if $f\in V^*=\Hom_\C(V,\C)$, then by definition the action of
$H$ on $f$ is given by $(Hf)(v)=f(S(H)v)=-f(Hv)$
and so
$V^{*}\in\cat_{-g}$.

We now consider the following class of finite dimensional highest weight modules.
For each $\alpha\in \C$, we let $V_\alpha$ be the $r$-dimensional
highest weight $\Ubar$-module of highest weight $\alpha + r-1$.  The
module $V_\alpha$ has a basis $\{v_0,\ldots,v_{r-1}\}$ whose action is
given by
\begin{equation}\label{E:BasisV}
H.v_i=(\alpha + r-1-2i) v_i,\quad E.v_i= \frac{\qn i\qn{i-\alpha}}{\qn1^2}
v_{i-1} ,\quad F.v_i=v_{i+1}.
\end{equation}
For all $\alpha\in \C$, the quantum dimension of $V_\alpha$ is zero:
$$\qdim(V_\alpha)= \sum_{i=0}^{r-1} v_i^*(K^{1-r}v_i)=
 \sum_{i=0}^{r-1} q^{(r-1)(\alpha + r-1-2i)} =
 q^{(r-1)(\alpha + r-1)}\frac{1-q^{2r}}{1-q^{2}}=0.$$

 For $a\in \Z$, let $\C^H_{ar}$ be the one
dimensional module in $\cat_{\bar 0}$ where both $E$ and $F$ act by zero and $H$ acts by $ar$.
 For each $n \in \{0,\ldots,r-2\}$,  let $S_n$ be the usual
$(n+1)$-dimensional simple highest weight $\overline U_q^{H}\mathfrak{sl}(2)$-module with
highest weight $n$. The module $S_n$ has highest weight vector $s_0$ such that $Es_0=0 $ and $Hs_0=ns_0$.
Then
$\{s_0, s_1,\ldots, s_n\}$  is a basis of $S_n$ where $Fs_i=s_{i+1}$, $H.s_i=(n-2i)s_i$, $E.s_0=0=F^{n+1}.s_0$
and $E.s_i=\frac{\qn i\qn{n+1-i}}{\qn1^2}s_{i-1}$.
Every simple module of $\cat$ is isomorphic to exactly one of the
modules in the list:
\begin{itemize}
	\item  $S_n\otimes  \C^H_{ar}$, for $n=0,\cdots, r-2$ and $a\in \Z$,
	\item  $V_\alpha$  for  $\alpha\in(\C\setminus \Z)\cup
	r\Z$.
\end{itemize}

For $i\in \{0,...,r-2\}$, let $P_i$ be the projective and
indecomposable module with highest weight $2r-2-i$, defined in
Proposition 6.2 of \cite{CGP2}.    Moreover, any indecomposable projective weight module has a
highest weight, and such a module $P\in\cat_{\wb0}\cup\cat_{\wb1}$ with
highest weight $(k+2)r-i-2$ is isomorphic to $P_i\otimes \C^H_{kr}$.

\section{Categorical preliminaries.}

\subsection{Hermitian ribbon category}

Here we follow \cite[Section 5.1]{Tu}.  Let $\cat$ be a strict monoidal category.  A dagger, or conjugation,  on $\cat$, assigns to each morphism $f\maps V \to W$ a morphism $f^{\dagger} \maps W \to V$ such that \
\begin{equation}
  (f^{\dagger})^{\dagger}  =f, \quad (f\otimes g)^{\dagger}  = f^{\dagger} \otimes g^{\dagger} , \quad
 (f\circ g)^{\dagger}  = g^{\dagger}  \circ f^{\dagger}.
\end{equation}
These relations imply $ \Id_V^{\dagger} = \Id_V$.  In other words, $\dagger$ is an object preserving contravariant involution on $\cat$.

\begin{defi}\label{def:Hermition}
 A {\em Hermitian ribbon category} is a ribbon monoidal category $\cat$ equipped with a conjugation satisfying the following conditions:
\begin{enumerate}[(i)]
\item for any objects $V$, $W$ of $\cat$, we have
\begin{equation}
   c_{V\otimes W}^{\dagger} = (c_{V,W})^{-1},
\end{equation}

\item for any object $V$ of $\cat$, we have \footnote{We use an equivalent definition to the one Turaev uses for $\ev_V^{\dagger}$. }
\begin{equation}\label{eq:Hermitian+dual}
   \theta_V^{\dagger}= (\theta_V)^{-1},
 \quad
 \coev_V^{\dagger} = \ev_V c_{V,V^{\ast}}(\theta_V \otimes \Id_{V^{\ast}}),
\quad
 \ev_V^{\dagger} =
 (\Id_{V^{\ast}} \otimes \theta_V) c_{V,V^{\ast}} \coev_V .
\end{equation}
\end{enumerate}
\end{defi}
\subsection{Modified traces on the projective modules.}
Let $\Proj$ be the full subcategory of $\cat$ consisting of projective
$\Ubar$-modules.  The subcategory $\Proj$ is an ideal (see also
\cite{GKP1}).  That is, it is closed under retracts (i.e.\ if $W \in \Proj$ and
$\alpha: X \to W$ and $\beta: W \to X$ satisfy $\beta \circ \alpha =
\Id_{X}$, then $X \in \Proj$) and if $X$ is in $\cat$, and $Y$ is in $\Proj$,
then $X \otimes Y$ is in $\Proj$.

For any objects $V,W$ of $\cat$ and any endomorphism $f$ of $V\otimes
W$, set
\begin{equation}\label{E:trL}
\ptr_{L}(f)=(\ev_{V}\otimes \Id_{W})\circ(\Id_{V^{*}}\otimes
f)\circ(\tcoev_{V}\otimes \Id_{W}) \in \End_{\cat}(W),
\end{equation} and
\begin{equation}\label{E:trR}
\ptr_{R}(f)=(\Id_{V}\otimes \tev_{W}) \circ (f \otimes \Id_{W^{*}})
\circ(\Id_{V}\otimes \coev_{W}) \in \End_{\cat}(V).
\end{equation}

\begin{defi}\label{D:trace}  A \emph{trace on $\Proj$} is a family of linear functions
$$\{\mt_V:\End_\cat(V)\rightarrow K\}$$
where $V$ runs over all objects of $\Proj$, such that the following two
conditions hold.
\begin{enumerate}
\item  If $U\in \Proj$, and $W\in \ob$, then for any $f\in \End_\cat(U\otimes W)$, we have
\begin{equation}\label{E:VW}
\mt_{U\otimes W}\left(f \right)=\mt_U \left( \ptr_R(f)\right).
\end{equation}
\item  If $U,V\in \Proj$, then for any morphisms $f:V\rightarrow U $, and $g:U\rightarrow V$  in $\cat$, we have
\begin{equation}\label{E:fggf}
\mt_V(g\circ f)=\mt_U(f \circ g).
\end{equation}
\end{enumerate}
\end{defi}
There exists up to a scalar a unique trace on $\Proj$. It is non-degenerate (cf Theorem 5.5 of \cite{GKP3}), in the following way. Let
$V,W\in\cat$ with $V$ projective. Then the pairing
$\brk{\cdot,\cdot}_{V,W}: \Hom_\cat(W,V)\otimes\Hom_\cat(V,W)\to\C$
given by $$\brk{f,g}_{V,W}=\mt_V(fg)$$ is non-degenerate.  It is
symmetric in the following sense. If $W$ is also projective, then
\begin{equation}
  \label{eq:pairing}
  \brk{g,f}_{W,V}=\brk{f,g}_{V,W} \ .
\end{equation}
If $W$ is not projective, then we take Equation \eqref{eq:pairing} as a
definition.

The modified dimension $\md(M)$, of an object $M$, is the modified trace of the identity morphism of $M$.  The modified trace on $\cat$ in this paper is normalized by
$$\md(V_\alpha)=\frac{\md_0\qn\alpha}{\qn{r\alpha}}$$
for a fixed real number $\md_0$.
\section{Hermitian ribbon structure on quantum $\mathfrak{sl}_2$-modules}\label{s:hrs}

A map $f:V\to W$ between two complex vector spaces is called \emph{antilinear} if $f(av+bv')=\bar a f(v)+ \bar b f(v')$ where $\bar a$ and $\bar b$ are the complex conjugates of the complex numbers $a$ and $b$, respectively.
Consider the operation
% $\dagger:  \Ubar \to \Ubar$
$\dagger:  x\mapsto \dagger(x):=\dag x$
defined on generators of $\Ubar$ by
% \begin{align*}\label{}
%  q & \to q^{-1}\\
% E   & \to F   \\
%  F  &  \to E  \\
%  K & \to K^{-1}\\
%  H & \to H.
%\end{align*}
%
\[
 \dag{E}=F, \quad \dag{F}=E, \quad \dag{K} = K^{-1}, \quad \dag{H} = H^{} .
% \dag{fx}=\bar{f}\dag{x} \quad \text{for $f \in \Q(q)$ and $x \in \Ubar$}.
 \]
\begin{lemma}
  The operator $\dagger: \Ubar \to \Ubar$ induces an antilinear, antialgebra
  involution which is also a coalgebra antimorphism.  That is,
  $\text{for any } a \in \Q(q) \text{ and } x,y \in \Ubar,$
\[
  \dagp{ax}=\bar{a}\dag{x} \quad \dagp{xy}=\dag{y}\dag{x} \quad \dagp{\dag x} =x \quad \Delta(\dag{x}) =(\dagger\otimes\dagger)(\tau(\Delta x)).
 %\text{ for } a \in \Q(q) \text{ and } x,y \in \Ubar.
\]%$\text{ for } a \in \Q(q) \text{ and } x,y \in \Ubar.$
% BP:
Furthermore, $S(\dag x)=\dagp{ S(x)}$, %$\dag{g}=g^{-1}$,
$(\dagger\otimes\dagger)\bp{R}=\tau(R^{-1})$, and $\dag{\theta}=\theta^{-1}$.
\end{lemma}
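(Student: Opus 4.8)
The plan is to verify each assertion by checking it on the generators $E,F,K,K^{-1},H$ and then extending by the (anti)multiplicativity and antilinearity axioms. First I would confirm that $\dagger$ is a well-defined antilinear antialgebra map: since $\dagger$ reverses products and conjugates scalars, it suffices to check that the defining relations \eqref{E:RelDCUqsl} together with $HK=KH$, $[H,E]=2E$, $[H,F]=-2F$, and the truncation relations $E^r=F^r=0$ are preserved. For instance, applying $\dagger$ to $KEK^{-1}=q^2E$ gives $K^{-1}FK=\bar{q}^{\,2}F$; since $q=e^{\pi\sqrt{-1}/r}$ has $\bar q = q^{-1}$, this reads $K^{-1}FK=q^{-2}F$, which is exactly $KFK^{-1}=q^2F$ rewritten, i.e.\ the relation $KFK^{-1}=q^{-2}F$ solved for $F$. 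The commutator relation $[E,F]=\frac{K-K^{-1}}{q-q^{-1}}$ is sent to $[E,F] = \frac{K^{-1}-K}{\bar q-\bar q^{-1}} = \frac{K^{-1}-K}{q^{-1}-q}=\frac{K-K^{-1}}{q-q^{-1}}$, so it is fixed; the relations involving $H$ are visibly preserved since $\dag H = H$ flips the sign of the bracket in the right way, and $E^r=F^r=0$ are swapped with each other. Involutivity $\dagp{\dag x}=x$ is immediate on generators. Thus $\dagger$ descends to $\Ubar$.

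Next I would check the coalgebra antimorphism property $\Delta(\dag x) = (\dagger\otimes\dagger)(\tau(\Delta x))$ on generators, using \eqref{E:HopfAlgDCUqsl}--\eqref{E:HopfAlgDCUqsle}. For $E$: $\Delta(\dag E)=\Delta(F)=K^{-1}\otimes F+F\otimes 1$, while $(\dagger\otimes\dagger)(\tau(\Delta E)) = (\dagger\otimes\dagger)(E\otimes 1 + K\otimes E) = F\otimes 1 + K^{-1}\otimes F$; these agree. The cases of $F$, $K$, $H$ are equally routine. Since both sides are antimultiplicative-compatible with the (coopposite) coproduct, this extends to all of $\Ubar$. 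The antipode identity $S(\dag x)=\dagp{S(x)}$ is likewise checked on generators: $S(\dag E)=S(F)=-KF$ and $\dagp{S(E)}=\dagp{-EK^{-1}}=\dag{K^{-1}}\dag{E}\cdot(\overline{-1})= -KF$ — wait, more carefully $\dagp{-EK^{-1}} = \overline{(-1)}\,\dag{(EK^{-1})} = -\dag{K^{-1}}\dag{E} = -KF$; agreement. For $H$: $S(\dag H)=S(H)=-H=\dagp{-H}=\dagp{S(H)}$.

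The two genuinely substantive claims are $(\dagger\otimes\dagger)(R)=\tau(R^{-1})$ and $\dag\theta=\theta^{-1}$, and I expect the first of these to be the main obstacle, since it requires tracking the effect of $\dagger$ on the explicit truncated $R$-matrix \eqref{eq:R} and comparing with the explicit inverse \eqref{eq:Rinverse}. The strategy is: $(\dagger\otimes\dagger)$ applied to $q^{H\otimes H/2}$ returns $q^{H\otimes H/2}$ (it acts by $q^{\lambda\lambda'/2}$ on weight vectors and $\dag H=H$, while $\overline{q^{H\otimes H/2}} = q^{-H\otimes H/2}$ from the scalar conjugation — one must be careful that antilinearity conjugates this scalar, giving $q^{-H\otimes H/2}$, which is exactly the factor appearing on the right of \eqref{eq:Rinverse}). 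Then $(\dagger\otimes\dagger)(E^n\otimes F^n) = F^n\otimes E^n$, and after applying $\tau$ this becomes $E^n\otimes F^n$; the scalar $\frac{\{1\}^{2n}}{\{n\}!}q^{n(n-1)/2}$ gets conjugated, and since $\{x\}=q^x-q^{-x}$ satisfies $\overline{\{x\}}=-\{x\}$ for real $x$ one finds $\overline{\{1\}^{2n}}=\{1\}^{2n}$, $\overline{\{n\}!}=(-1)^n\{n\}!$, and $\overline{q^{n(n-1)/2}}=q^{-n(n-1)/2}$, producing exactly the coefficient $(-1)^n\frac{\{1\}^{2n}}{\{n\}!}q^{-n(n-1)/2}$ of \eqref{eq:Rinverse}. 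Matching the order of the $q^{\mp H\otimes H/2}$ factor against the summation — $R$ has it on the left, $R^{-1}$ on the right — is where $\tau$ and the antimultiplicativity of $\dagger$ interact, and this bookkeeping is the delicate point. Finally, $\dag\theta=\theta^{-1}$ can be deduced either by a direct computation on $\theta$ using $S(\dag x)=\dagp{S(x)}$ and the above, or more cleanly from the standard ribbon identity $\theta = (\text{something built from }R)$ together with the relations already proved; I would present whichever is shorter, noting that $\dag\theta=\theta^{-1}$ is equivalent to the twist-compatibility $\theta_V^\dagger=(\theta_V)^{-1}$ needed later in Definition~\ref{def:Hermition}.
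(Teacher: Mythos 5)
The paper gives no argument for this lemma beyond the citation ``similar to Lemma 1.3 of \cite{Wen98}''; your proposal fills in exactly the kind of generator-by-generator verification that reference supplies, so the \emph{approach} matches. A few small things to tidy up. In the check that $KEK^{-1}=q^2E$ is preserved, anti-multiplicativity reverses the three factors, so $\dagger(KEK^{-1})=\dagger(K^{-1})\dagger(E)\dagger(K)=KFK^{-1}$, not $K^{-1}FK$; comparing with $\dagger(q^2E)=q^{-2}F$ gives $KFK^{-1}=q^{-2}F$ \emph{on the nose}, and the subsequent ``rewriting'' you attempt (claiming $K^{-1}FK=q^{-2}F$ is $KFK^{-1}=q^2F$ solved for $F$) has a sign error that would not match the actual relation — the correct order of factors makes this unnecessary. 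Your treatment of $(\dagger\otimes\dagger)(q^{H\otimes H/2})$ contains a self-correcting clause (first ``returns $q^{H\otimes H/2}$'', then ``giving $q^{-H\otimes H/2}$''); only the latter is right, since the eigenvalues $q^{\lambda\lambda'/2}$ get complex-conjugated even though $\dag H=H$, and you should also note explicitly that anti-multiplicativity of $\dagger\otimes\dagger$ moves this factor from the left of $R$ to the right of $(\dagger\otimes\dagger)(R)$, which is why it lands in the correct position to match $\tau(R^{-1})$. Finally, $\dag\theta=\theta^{-1}$ is stated but not actually argued; a clean route, consistent with how you handled $R$, is to apply $\dagger$ term-by-term to the explicit formula for $\theta$, using $S(\dag x)=\dagp{S(x)}$ on the $S(F^n)$ factor, the same scalar conjugations $\overline{\{1\}^{2n}}=\{1\}^{2n}$, $\overline{\{n\}!}=(-1)^n\{n\}!$, $\overline{q^{n(n-1)/2}}=q^{-n(n-1)/2}$, and $\dag{K^{r-1}}=K^{1-r}$, then compare with the formula for $\theta^{-1}$ obtained analogously from $R^{-1}$; as a sanity check one can also verify it directly on the central characters recorded in your table \eqref{halftwistchart}, since there $|\brk{\theta_V}|=1$.
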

\begin{proof}
This is similar to Lemma 1.3 of \cite{Wen98}.
\end{proof}

A \emph{Hermitian form} on a $\C$-vector space $V$ is a function
% BP $f: V\otimes V\to \C$ such that
 $f \colon V\times V\to \C$ such that
\begin{enumerate}
  %\item $f(x,ay+bz)=af(x,y)+bf(x,z)$
  \item $f(v,av'+bv'')=af(v,v')+bf(v,v'')$,
 %  \item $f(x,y)=\overline{f(y,x)}$
  \item $f(v,v')=\overline{f(v',v)}$,
\end{enumerate}
for all $v,v', v''\in V$ and $a,b\in\C$.  It follows that $f$ is
antilinear in the first coordinate.  The kernel of $f$ is
$\{v\in V: V^* \ni f(v,\cdot)=0 \}$, and we say $f$ is non-degenerate if its kernel
is $\{0\}$.

Let $f \colon V\otimes V\to \C$ be a non-degenerate Hermitian form on a finite
dimensional weight $\Ubar$-module $V$. Then we say $f$ is
\emph{compatible} with the antilinear antialgebra automorphism $\dagger$ if
%$f(\dag{u}x,y)=f(x,uy)$ for all $u\in \Ubar$ and $x,y\in V$ or
$f(\dag{x}v,v')=f(v,xv')$ for all $x\in \Ubar$ and $v,v'\in V$.
Equivalently, this means $\rho_V(x)^\dagger=\rho_V(\dag{x})$ where $\rho_V(x)^\dagger$ denotes the Hermitian adjoint on Hermitian vector spaces.   In this case, we say
$V$ is a \emph{Hermitian} $\Ubar$-module with \emph{Hermitian
  structure $f$}.

\begin{lemma} \label{daglemma}
Let $f_V$ and $f_W$ be Hermitian structures on $\Ubar$-modules
$V$ and $W$, respectively.  Then there is a well-defined adjoint map $\dagger$, which is
an antilinear homomorphism
\[
\begin{array}{rcl}
  \dagger: \Hom_\cat(V,W)&\to&\Hom_\cat(W,V)\\g&\mapsto& g^\dagger ,
 \end{array}
\] uniquely defined by
$f_W(\cdot,g(\cdot))=f_V(g^\dagger(\cdot),\cdot)$.

Moreover, if $U$
is a third Hermitian module, and
$h\in \Hom_\cat(W,U)$, then $(hg)^\dagger=g^\dagger h^\dagger$.
\end{lemma}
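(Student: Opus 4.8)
The plan is to construct the adjoint map explicitly and then verify its defining property, linearity (antilinearity), and the contravariant functoriality. First I would fix Hermitian structures $f_V$ and $f_W$ on $V$ and $W$ and, given $g \in \Hom_\cat(V,W)$, I would use nondegeneracy of $f_V$ to \emph{define} $g^\dagger$: for each $w \in W$, the assignment $v \mapsto \overline{f_W(w, g(v))}$ — equivalently $v \mapsto f_W(g(v), w)$ in the second slot — is a $\C$-linear functional on $V$, hence by nondegeneracy of $f_V$ there is a unique vector, call it $g^\dagger(w)$, with $f_V(g^\dagger(w), v) = f_W(w, g(v))$ for all $v$. This is the uniqueness clause. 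One then checks $g^\dagger$ is additive and that $g^\dagger(aw) = \bar a\, g^\dagger(w)$, which follows because $f_W(aw, g(v)) = \bar a\, f_W(w, g(v))$ (antilinearity in the first slot) matched against $f_V(\bar a\, g^\dagger(w), v) = \bar a\, f_V(g^\dagger(w), v)$; so $g^\dagger$ is a well-defined antilinear map $W \to V$ of vector spaces, and $g \mapsto g^\dagger$ is itself antilinear on $\Hom$-sets by a parallel bilinearity bookkeeping.

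Next I would show $g^\dagger$ is actually a morphism in $\cat$, i.e.\ a $\Ubar$-module map. For $x \in \Ubar$ and $w \in W$, $v \in V$, compute $f_V(g^\dagger(x w), v) = f_W(xw, g(v)) = f_W(w, \dag{x}\, g(v)) = f_W(w, g(\dag{x} v))$, using compatibility of $f_W$ with $\dagger$ and then that $g$ is a module map; this last expression equals $f_V(g^\dagger(w), \dag x v) = f_V(x\, g^\dagger(w), v)$ by compatibility of $f_V$. Since this holds for all $v$ and $f_V$ is nondegenerate, $g^\dagger(xw) = x\, g^\dagger(w)$, so $g^\dagger \in \Hom_\cat(W,V)$. (It is implicit here that Hermitian modules come with a fixed compatible structure, so that $\dag x$ acts as the adjoint of $x$; this is exactly the ``compatible'' condition recalled just before the lemma.)

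Finally, for the contravariance: given a third Hermitian module $U$ with structure $f_U$ and $h \in \Hom_\cat(W,U)$, I would verify $(hg)^\dagger = g^\dagger h^\dagger$ by the usual adjoint juggling. For $u \in U$ and $v \in V$: $f_V\bigl((hg)^\dagger(u), v\bigr) = f_U\bigl(u, h(g(v))\bigr) = f_W\bigl(h^\dagger(u), g(v)\bigr) = f_V\bigl(g^\dagger(h^\dagger(u)), v\bigr)$, where the middle two equalities are the defining properties of $h^\dagger$ and $g^\dagger$ respectively. Nondegeneracy of $f_V$ gives $(hg)^\dagger(u) = g^\dagger(h^\dagger(u))$ for all $u$, i.e.\ $(hg)^\dagger = g^\dagger h^\dagger$. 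None of these steps is a genuine obstacle — the argument is the standard construction of a Hermitian adjoint transplanted to the module category; the only points requiring a little care are the consistent tracking of which slot is antilinear (so that $g \mapsto g^\dagger$ comes out antilinear rather than linear) and invoking nondegeneracy of $f_V$ at each stage to pass from ``holds against all $v$'' to an equality of vectors.
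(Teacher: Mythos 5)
Your proof is correct and follows essentially the same route as the paper: define $g^\dagger$ pointwise via nondegeneracy of $f_V$, verify it is a $\Ubar$-module morphism by the same chain of equalities using compatibility of $f_V$, $f_W$ with $\dagger$, and then prove contravariance by the standard adjoint calculation (the paper leaves antilinearity and contravariance as ``similar calculations''; you spell them out, which is fine). One small slip worth fixing: in your construction of $g^\dagger(w)$ you say ``the assignment $v \mapsto \overline{f_W(w, g(v))}$ \ldots is a $\C$-linear functional on $V$''; in fact that conjugated map is \emph{antilinear} in $v$, and what you want is that $v \mapsto f_W(w,g(v))$ (unconjugated) is linear, hence by nondegeneracy of $f_V$ (which gives an antilinear isomorphism $u \mapsto f_V(u,\cdot)$ from $V$ to $V^*$) there is a unique $g^\dagger(w)$ with $f_V(g^\dagger(w),v)=f_W(w,g(v))$ for all $v$. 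Your subsequent defining equation and all later steps are correct, so this is a local bookkeeping error that does not propagate.
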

\begin{proof}
First, since $f_V$ and $f_W$ are non-degenerate Hermitian forms, $g^\dagger$ is a well-defined linear map.  To see that this map is a $\Ubar$-module morphism, let $x\in  \Ubar$, $v\in V$ and $w\in W$. Then
$$
f_V(g^\dagger(xw),v)=f_W(xw,g(v))=f_W(w,g(\dag{x}v))=f_V(g^\dagger(w),\dag{x}v)=f_V(xg^\dagger(w),v).
$$
Since $v$ is any element of $V$ and $f_V$ is non-degenerate, it then follows that $g^\dagger(xw)=xg^\dagger(w)$.  Similar calculations show that the map $\dagger$ is antilinear and satisfies the last property of the lemma.
\end{proof}

If $V$ is a $\Ubar$ weight module, let $\con V=\{\bar v:v\in V\}$ be
the same real vector space with antilinear scalar multiplication, and
the action of $\Ubar$ on $\bar v\in\con V$ be given by
$x.\bar v=\wb{\dag{S(x)}.v}$.

A sesquilinear form $f$ on a
$\Ubar$-module $V$ is said to be compatible with $\Ubar$ if it is non-degenerate and for any
$x\in \Ubar$ and $v_1, v_2 \in V$, we have
$f(v_1,\rho_V(x)(v_2))=f(\rho_V(\dag{x})(v_1),v_2)$.
\begin{lemma}\label{L:sesq}
  Let $V$ be a $\Ubar$ weight module. Then there exists a compatible
  sesquilinear form $f$ on $V$ if and only if $\con V\simeq V^*$.
  Furthermore, if $V$ is simple, then $f$ can be chosen to be Hermitian and is
  unique up to a constant in $\R^*$.
\end{lemma}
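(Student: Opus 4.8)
The plan is to exploit the natural identification, for any finite-dimensional weight module $V$, between sesquilinear forms on $V$ and linear maps $V \to \con V^*$ (equivalently $\con V \to V^*$). A sesquilinear form $f$ with the prescribed compatibility should correspond exactly to a $\Ubar$-module isomorphism. First I would make the dictionary precise: given $f$, define $\Phi_f\maps V \to \con{V}^*$ by $\Phi_f(v) = f(\cdot, v)$; linearity in the second slot of $f$ makes this $\C$-linear, and non-degeneracy of $f$ makes $\Phi_f$ an isomorphism of vector spaces (since $\dim V = \dim \con V^* $). I would then check that the compatibility condition $f(v_1,xv_2) = f(\dag{x}v_1,v_2)$ is equivalent to $\Phi_f$ being $\Ubar$-linear, where $\con{V}^*$ carries the module structure dual to the one on $\con V$ defined just before the statement, namely $x.\bar v = \wb{\dag{S(x)}.v}$. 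The point of the definition of $\con V$ via $\dag{S(x)}$ is precisely to arrange that $\con V^* \cong V^*$ as $\Ubar$-modules — indeed, on $\con{V}^*$ the action is $(x.\xi)(\bar v) = \xi(S(x).\bar v) = \xi(\wb{\dag{S^2(x)}.v})$, and since $S^2$ is the inner automorphism given by conjugation by $K^{r-1}$ on $\cat$ (or more simply, since on weight modules $S^2(x) = K x K^{-1}$), one recovers the standard action on $V^*$ up to the automorphism, so $\con V^* \simeq V^*$ canonically. Hence a compatible $f$ exists iff there is a $\Ubar$-isomorphism $V \xrightarrow{\sim} \con V^* \simeq V^*$, which unwinds to $\con V \simeq V^*$ after dualizing; this is the first claimed equivalence.

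For the second part, suppose $V$ is simple. Then by the above a compatible sesquilinear form exists and corresponds to an isomorphism $\phi\maps V \xrightarrow{\sim}\con V^*$, unique up to a scalar by Schur's lemma (Definition~\ref{def:Hermition}'s preamble recalls $\End_\cat(V) = \C\Id_V$ for simple $V$). I would then run the standard averaging/rescaling argument to make $f$ Hermitian: starting from any compatible $f$, define $f'(v,w) = \wb{f(w,v)}$ and check $f'$ is again sesquilinear and compatible with $\dagger$ — this uses $(\dag{\cdot})^2 = \Id$ and antilinearity of $\dagger$ from the first Lemma in Section~\ref{s:hrs}. By uniqueness up to scalar, $f' = c\, f$ for some $c \in \C^*$; applying the construction twice gives $\bar c\, c = 1$, so $|c| = 1$. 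Writing $c = \mu/\bar\mu$ for suitable $\mu \in \C^*$ (possible since $|c|=1$), the rescaled form $\mu f$ satisfies $\wb{(\mu f)(w,v)} = \bar\mu\, \wb{f(w,v)} = \bar\mu c f(v,w) = \mu f(v,w)$, i.e. $\mu f$ is Hermitian. Finally, two Hermitian compatible forms differ by a scalar $c$ with $\bar c = c$, i.e. $c \in \R^*$, giving the stated uniqueness.

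I expect the main obstacle to be the bookkeeping in the first paragraph: verifying carefully that the module structure on $\con V^*$ coming from the twisted structure $x.\bar v = \wb{\dag{S(x)}.v}$ on $\con V$ really is isomorphic to $V^*$ as a $\Ubar$-module, and not merely up to a twist by an automorphism. The subtlety is that $S$ is not involutive on $\Ubar$ ($S^2 \neq \Id$), so one must use that on \emph{weight} modules $S^2$ acts as conjugation by $K^{\pm(r-1)}$ (equivalently, track how $\dag{S(\cdot)}$ composed with itself returns to the identity using $\dagp{S(x)} = S(\dag x)$ from the first Lemma, together with $\dag{(\dag x)} = x$), which is exactly why the weight-module hypothesis is needed and why $\con{(\con V)} \simeq V$. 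Once that identification is pinned down, the rest is the routine Schur-plus-averaging argument sketched above.
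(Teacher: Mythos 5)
Your second and third parts (rescaling a compatible sesquilinear form by a unimodular constant to make it Hermitian, then uniqueness up to $\R^*$) are correct and match the paper's argument up to notation: the paper fixes a basis, lets $A$ be the matrix of $f$, uses Schur to show $A^{-1}A^*=\lambda^2$ is a unimodular scalar, and replaces $f$ by $\lambda f$. Your $f'(v,w)=\wb{f(w,v)}=c\,f(v,w)$ and $c=\mu/\bar\mu$ is the coordinate-free version of the same computation.

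The first part, however, has a genuine gap, and the patch you apply to the obstacle you yourself flag does not work. The claim that $\con{V}^*\simeq V^*$ is false in general. The weights of $V^*$ are $\{-\lambda_i\}$, while the weights of $\con V$ are $\{-\wb{\lambda_i}\}$ and hence of $(\con V)^*$ are $\{\wb{\lambda_i}\}$; these multisets agree only when the weight set is stable under $\lambda\mapsto -\wb\lambda$. For $V_\alpha$ with $\alpha\in\R\setminus\{0\}$ they do not agree (the weights are symmetric about $\alpha$, not about $0$), yet $V_\alpha$ does carry a compatible Hermitian form by Proposition~\ref{prop:form-Valpha} — so a valid proof cannot pass through $\con{V}^*\simeq V^*$. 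Invoking that $S^2$ is inner cannot rescue a statement that already fails at the level of characters. Relatedly, the map $\Phi_f: V\to (\con V)^*$, $\Phi_f(v')(\bar v)=f(v,v')$, is not $\Ubar$-linear when $f$ is compatible: one computes
\[
(x.\Phi_f(v'))(\bar v) \;=\; \Phi_f(v')\bigl(S(x).\bar v\bigr) \;=\; \Phi_f(v')\bigl(\wb{\dag{S^2(x)}.v}\bigr) \;=\; f\bigl(\dag{S^2(x)}.v,\,v'\bigr),
\]
so $\Ubar$-linearity of $\Phi_f$ would require $f(v,x.v')=f(\dag{S^2(x)}.v,v')$, which is compatibility twisted by $S^2$: the antipode from the dual action and the antipode built into the definition of $\con V$ compound rather than cancel. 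The paper uses the \emph{other} adjoint, $\vp:\con V\to V^*$ given by $\vp(\bar v)=f(v,\cdot)$. There the single $S^{-1}$ from $\psi(x.v)=(S^{-1}(x).\psi)(v)$ cancels against the single $S$ in $x.\bar v=\wb{\dag{S(x)}.v}$, leaving no residual $S^2$, and one gets exactly: $\vp$ is $\Ubar$-linear iff $f$ is compatible. The equivalence $\exists\,\text{compatible }f\iff\con V\simeq V^*$ then reads off directly, with no intermediate claim about $\con{V}^*$. If you insist on $\Phi_f$, you would need to precompose with the pivotal isomorphism $V\to V^{**}$ (which carries a factor of $K^{r-1}$) precisely in order to absorb the stray $S^2$ — and at that point you have effectively reproduced the paper's $\vp$.
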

\begin{proof}
  For the first part, we follow \cite{Kir96,Wen98}.  The sesquilinear
  form associated to an isomorphism $\vp: \con V\stackrel\sim\to V^*$ is given by
  $$f:V\times V\stackrel{\bar\Id\otimes\Id}\longrightarrow
  \con V\otimes V\stackrel{\vp\otimes\Id}\longrightarrow
  V^*\otimes V\stackrel\ev\longrightarrow \C .$$
  The compatibility follows from the following string of equalities:
  $$f(v_1,x.v_2)=\ev(\vp(\bar v_1)\otimes x.v_2)=\ev(S^{-1}(x).\vp(\bar v_1),v_2)
  =\ev(\vp(S^{-1}(x).\bar v_1),v_2)$$
  $$=\ev(\vp(\wb{\dag{x}.v_1}),v_2)=f(\dag{x}.v_1,v_2).$$

  Next, for a fixed basis $(e)$ of $V$, let $A$ denote the matrix of $f$:
  $A_{i,j}=f(e_i,e_j)$.  Let $\Mat_{(e)}(x)=M$ and denote by
  $M^*$ the conjugate transpose of $M$. Then
  $\Mat_{(e)}(\dag{x})= (A^{-1})^*M^*A^*$ and
  $M=\Mat_{(e)}(\dagp{\dag x})=(A^{-1})^*AMA^{-1}A^*$. Hence $A^{-1}A^*$
  commutes with the image of $\Ubar$ in $\End_\C(V)$.  Suppose now
  that $V$ is also simple.
  Then $A^{-1}A^*$ is a scalar because it is the matrix of a
  $\Ubar$-module morphism and $\End_\cat(V)=\C\Id_V$.  Let us write
  $A^{-1}A^*=\lambda^{2}$ then $(A^*)^*=A$ implies
  $\wb\lambda\lambda=1$.
  % Then the Jacobson density theorem implies that
  % $\Ubar$ surjects onto $\End_\C(V)$, and $A^{-1}A^*$ must be a scalar
  % endomorphism $\lambda^{2}\in\C$ with $\wb\lambda\lambda=1$ since
  % $(A^*)^*=A$.
 Then the matrix of $\lambda f$ is Hermitian as
  $(\lambda A)^*=\wb\lambda
  A^*=\wb\lambda\lambda^{2}A=|\lambda|^2(\lambda A)=\lambda A$.

  Finally, any two compatible sesquilinear forms $f,f'$ differ by an
  automorphism $g\in\Aut_\cat(V)$ such that $f'=f(\cdot,g(\cdot))$. So
  the last statement follows with $\Aut_\cat(V)=\C^*\Id$ and for $f$
  Hermitian, $\lambda f$ is Hermitian if and only if $\lambda\in\R$.
\end{proof}

If $A$ is a set of objects in $\cat$, then we define the \emph{category
  generated} by $A$ as the full subcategory of $\cat$ which has as
objects, all direct sums of retracts of all tensor products of the
form:
$$X_{1}\otimes X_{2}\otimes \cdots \otimes X_{p}\quad\text{ where }X_i\in A\cup A^*.$$

Each simple module $S_i$, for $i=0,\ldots, r-2$ has an indecomposable projective cover $P_i$.  The dimension of each $P_i$ is $2r$.  A detailed description of this module could be found in \cite[Proposition 6.1]{CGP2}.
A summary could be found in Figure \ref{fig:P_i}.  A vector $w_k^Y$, for $Y \in \{R,H,S,L\}$ has weight $k$ (under the action of $H$).

\begin{figure} [H]
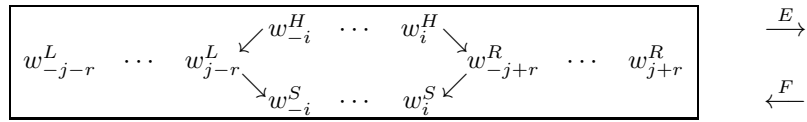

  \centering
 \[
\begin{array}{|ccccccccc|cc}
\cline{1-9}
  &&& \vech_{-i}& \cdots &\vech_i&&&&&\stackrel E{\longrightarrow}\\
\vecl_{-j-r}& \cdots &
\vecl_{j-r}
&&&& \vecr_{-j+r}& \cdots &\vecr_{j+r}&\ \ \ &\\
  &&& \vecs_{-i}& \cdots &\vecs_i&&&&&\stackrel F{\longleftarrow}\\
\cline{1-9}
\end{array}
\put(-220,8){$\swarrow$}\put(-219,-9){$\searrow$}
\put(-143,8){$\searrow$}\put(-143,-9){$\swarrow$}
\]

  \caption{The weight spaces structure of the module $P_i$ (here $j=r-2-i$).}
  \label{fig:P_i}
\end{figure}

Consider the subcategory $\catd$ of $\cat$ generated by the following set:
\begin{equation}\label{E:SetA}
\A=\left\{V_\alpha, S_n, P_i,  \C^H_{ar}  \; \Big| \; \alpha\in(\R\setminus \Z)\cup
	r\Z,  n,i \in\{0,\cdots, r-2\},  a\in \Z \right\}.
\end{equation}

\begin{prop}
  All projectives objects are in the Karoubi envelope (or idempotent completion) of the
  additive monoidal category generated by the simple objects.
\end{prop}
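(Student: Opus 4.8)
The plan is to show that every indecomposable projective object of $\cat$ is a retract of some tensor product of simple objects, which by definition places it in the Karoubi envelope of the additive monoidal category generated by the simples. Recall from Section~\ref{S:QUantSL2H} that the indecomposable projectives are, up to tensoring by the invertible objects $\C^H_{ar}$, exactly the modules $P_i$ for $i \in \{0,\dots,r-2\}$. Since each $\C^H_{ar}$ is itself simple, it suffices to exhibit each $P_i$ as a retract of a tensor product of simple modules. The natural candidate is a tensor product involving $V_\alpha$ and $S_n$ for suitable $\alpha, n$: because $\Proj$ is an ideal (as recalled before Definition~\ref{D:trace}), any tensor product having at least one projective tensor factor is projective, and the $V_\alpha$ are projective for generic $\alpha$. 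So the strategy is: take a tensor product $V_\alpha \otimes S_n$ (or, if needed, $V_\alpha \otimes V_\beta$), decompose it into indecomposables, and identify $P_i$ as one of the summands.

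First I would recall the decomposition rules for tensor products in $\cat$. For generic $\alpha$, the tensor product $V_\alpha \otimes S_n$ decomposes as a direct sum of modules of the form $V_{\alpha + n - 2k}$, which are again simple and projective; this does not immediately produce the $P_i$. The key observation is that when a \emph{weight collision} occurs — i.e.\ when the parameters are chosen so that $\alpha \in \Z$ or the highest weights line up so that some $V_{\alpha+j}$ degenerates — indecomposable non-simple projectives appear as summands. Concretely, I expect that for appropriate integer or half-integer choices of $\alpha$, the tensor product $V_\alpha \otimes S_{r-2}$ (or $V_\alpha\otimes V_\beta$ with $\alpha+\beta$ near an integer) contains a copy of $P_i$ as a direct summand, with the remaining summands being simple modules $V_\gamma$ or $S_m\otimes\C^H_{ar}$. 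The weight-space bookkeeping in Figure~\ref{fig:P_i} — $P_i$ has weight spaces organized in the displayed ``diamond'' pattern with $H$-, $L$-, $R$-, $S$-labelled vectors — can be matched against the weight multiplicities of the candidate tensor product to confirm that $P_i$ fits inside it.

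The cleanest way to carry this out is probably to cite the explicit tensor-product computations in \cite{CGP2}, where the structure of $P_i$ and its appearance inside tensor products of the $V_\alpha$'s is worked out; combined with the ideal property of $\Proj$, this gives that $P_i$ is a retract of a tensor product of simples. Then, tensoring with $\C^H_{ar}$ (a simple, invertible object) and taking direct sums, one obtains every indecomposable projective, and hence — since projectives in $\cat$ are finite direct sums of indecomposable projectives — every projective object, inside the Karoubi envelope of the additive monoidal category generated by $\A$'s simple members. The passage from ``retract of a tensor product'' to ``object of the Karoubi envelope of the additive monoidal subcategory'' is then immediate from the definitions of retract and idempotent completion.

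The main obstacle I anticipate is purely the representation-theoretic input: one needs a clean statement that each $P_i$ genuinely occurs as a \emph{direct summand} (not merely a subquotient) of an explicit tensor product of simple modules. Subquotients are not enough — the Karoubi envelope only captures retracts — so the argument hinges on the semisimplicity-failure being ``as mild as possible,'' namely that the relevant tensor products decompose with the $P_i$ split off. This is exactly the kind of fact established in \cite{CGP2}, so modulo citing it the proof is short; the real work, if one wanted to be self-contained, would be redoing those tensor-product decompositions and verifying the splitting via the weight-space data of Figure~\ref{fig:P_i} together with a dimension count ($\dim P_i = 2r$).
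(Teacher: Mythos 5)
Your approach is essentially the same as the paper's: reduce to exhibiting each $P_i$ (up to tensoring by $\C^H_{mr}$) as a direct summand of a tensor product of simple modules, and invoke the explicit decompositions from \cite{CGP2}. The paper makes the choice concrete via \cite[Proposition 8.4]{CGP2}, decomposing $(S_{r-1}\otimes\C^H_{mr})\otimes S_{r-1}$ for even $k$ and $(S_{r-2}\otimes\C^H_{mr})\otimes S_{r-1}$ for odd $k$ --- exactly the $\alpha=0$ specialization of the $V_\alpha\otimes S_n$ and $V_\alpha\otimes V_\beta$ families you gesture at, since $V_0=S_{r-1}$ --- and then observes that every remaining indecomposable projective is already simple (a case your classification of indecomposable projectives omits, though harmlessly, since simples are trivially in the generated category).
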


\begin{proof}
By \cite[Proposition 8.4]{CGP2}, one could generate all $P_k \otimes \C_{mr}^H$ where $k$ is even by decomposing
$(S_{r-1} \otimes \C_{mr}^H) \otimes S_{r-1}$.
Similarly, one could generate all $P_k \otimes \C_{mr}^H$ where $k$ is odd by decomposing
$(S_{r-2} \otimes \C_{mr}^H) \otimes S_{r-1}$.

All other %B: added
indecomposable
projective objects are already simple.
\end{proof}

We will now show that each object of $\catd$ has a Hermitian structure and prove that $\catd$ is a Hermitian ribbon category.

\begin{prop} \label{prop:form-Valpha} Any simple module $V$ in $\A$ has
  a Hermitian structure.  Moreover, the form $(\cdot,\cdot)$ on $V$
  is uniquely determined by $(v_0,v_0) = 1$ where $v_0$ is a highest
  weight vector of $V$.
  % $(\cdot,\cdot)$ on $V
  % if $\alpha\in\R$, the form
  % $(\cdot,\cdot)$ on $V_\alpha$ is uniquely determined by
  % $(v_0,v_0) = 1$ where $v_0$ is a highest weight vector of
  % $V_\alpha$.
\end{prop}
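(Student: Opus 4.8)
The plan is to treat the three families of simple modules in $\A$ separately, since the modules $\C^H_{ar}$ and $S_n$ are one-dimensional or have well-understood structure, while the $V_\alpha$ require the bulk of the work. In each case I want to invoke Lemma~\ref{L:sesq}: a simple weight module $V$ admits a compatible Hermitian form, unique up to $\R^*$, precisely when $\con V \simeq V^*$. So the first step is to verify this isomorphism for each family. For $\C^H_{ar}$ this is immediate since $\con{\C^H_{ar}}$ and $(\C^H_{ar})^*$ are both one-dimensional with $H$ acting by $-ar$ (using $\dag H = H$, $S(H)=-H$ in the formula $x.\bar v = \wb{\dag{S(x)}.v}$, so $H$ acts by $\wb{-ar}=-ar$ — wait, we need to match $V^*\in\cat_{-g}$: both sit in weight $-ar$, hence isomorphic). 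For $S_n$, both $\con{S_n}$ and $S_n^*$ are simple $(n+1)$-dimensional modules in $\cat_{\bar n}$ with highest weight $n$, so they are isomorphic. For $V_\alpha$ with $\alpha\in(\R\setminus\Z)\cup r\Z$, I must check that $V_\alpha^* \simeq \con{V_\alpha}$; the key point is that $V_\alpha^*$ is the $r$-dimensional simple of highest weight $-(\alpha+r-1) + 2(r-1) = -\alpha + r - 1$... more carefully, the lowest weight of $V_\alpha$ is $\alpha - r + 1$, so $V_\alpha^*$ has highest weight $-(\alpha - r + 1) = -\alpha + r - 1$, which is the highest weight of $V_{-\alpha}$, so $V_\alpha^* \simeq V_{-\alpha}$; and one checks $\con{V_\alpha} \simeq V_{-\alpha}$ as well, crucially using that $\alpha$ is \emph{real} so that the weights $\wb{\alpha + r - 1 - 2i}$ of $\con{V_\alpha}$ are again real and match those of $V_{-\alpha}$ after reindexing. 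This reality hypothesis on $\alpha$ in \eqref{E:SetA} is exactly what makes the statement true.

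Once existence is established, the second step is the normalization claim. By Lemma~\ref{L:sesq} the Hermitian form is unique up to a real scalar; fixing $(v_0,v_0)$ to be a specified nonzero real number rigidifies it completely. To see $(v_0,v_0)$ can be taken equal to $1$ I need $(v_0,v_0)\in\R$ (automatic, since $(v_0,v_0)=\wb{(v_0,v_0)}$ by the Hermitian axiom) and $(v_0,v_0)\neq 0$. The nonvanishing follows because $v_0$ generates $V$ as a $\Ubar$-module (it is a highest weight vector of a highest weight module) together with non-degeneracy: if $(v_0, V) = 0$ then for any $x$, $(v_0, xv_0) = (\dag x v_0, v_0)$, and since $\{F^i v_0\}$ spans $V$ and $\dag{(F^i)} = E^i$, we'd get $(E^i v_0, v_0)$-type expressions — better: compute $(v_0, F^i v_0)$ directly. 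Actually the cleanest argument: $v_0$ spans the top weight space, which is one-dimensional; since $f$ is non-degenerate and weight spaces of different weights are orthogonal (because $(\dag H v, v') = (v, H v')$ forces $\lambda(v,v')=\wb{\lambda'}(v,v')$, and real weights give $(v,v')=0$ unless $\lambda=\lambda'$), the top weight space must pair non-degenerately with the bottom weight space; but for $V_\alpha$ top and bottom are distinct... hmm. Let me instead argue: the pairing restricted to the highest weight line must be non-degenerate against \emph{some} weight space, and by the weight-matching just described that forces $(v_0,v_0)\neq 0$ only if the highest weight equals minus... no.

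Let me reconsider the orthogonality: from $f(v_1, Hv_2) = f(\dag H v_1, v_2) = f(Hv_1, v_2)$ we get $\wb{\lambda_2} f(v_1,v_2) = \lambda_1 f(v_1, v_2)$, so if $f(v_1,v_2)\neq 0$ then $\lambda_1 = \wb{\lambda_2}$; since all weights here are \emph{real}, this says $\lambda_1 = \lambda_2$, i.e. distinct weight spaces are orthogonal and $f$ restricts to a non-degenerate form on \emph{each} weight space. In particular it is non-degenerate on the one-dimensional highest weight space $\C v_0$, giving $(v_0,v_0)\neq 0$; being real and nonzero we rescale to $1$. This also handles $S_n$ and $\C^H_{ar}$ uniformly. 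The main obstacle is the bookkeeping in the isomorphism $V_\alpha^* \simeq \con{V_\alpha}$ — tracking how $S$, $\dagger$, and complex conjugation interact on the explicit basis \eqref{E:BasisV}, and confirming that reality of $\alpha$ (so $\qn{i-\alpha}$ behaves well under conjugation) is precisely what is needed; everything else is formal given Lemma~\ref{L:sesq}.
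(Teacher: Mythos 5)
Your proof is correct and follows essentially the same route as the paper's: establish $\con V \simeq V^*$ using reality of weights and then invoke Lemma~\ref{L:sesq}. The paper does this uniformly in one line via characters ($\con V$ is simple with character equal to the complex conjugate of that of $V^*$; reality of the $H$-weights on $\A$ makes the two characters coincide), which is exactly what your per-family verifications unpack. The genuinely useful addition in your write-up is the explicit argument for the normalization claim $(v_0,v_0)=1$: because all $H$-weights are real, distinct weight spaces are mutually $f$-orthogonal, so non-degeneracy of $f$ forces non-degeneracy on each weight space separately --- in particular on the one-dimensional highest-weight line --- giving $(v_0,v_0)\in\R^*$, after which the real rescaling permitted by Lemma~\ref{L:sesq} finishes. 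The paper's proof glosses this step (``Then Lemma~\ref{L:sesq} applies''), so your argument fills a real gap. One small bookkeeping slip: with the paper's convention ($f$ antilinear in the first argument, linear in the second), the chain $f(v_1,Hv_2)=f(\dag Hv_1,v_2)=f(Hv_1,v_2)$ yields $\lambda_2\,f(v_1,v_2)=\wb{\lambda_1}\,f(v_1,v_2)$, not $\wb{\lambda_2}\,f(v_1,v_2)=\lambda_1\,f(v_1,v_2)$ as you wrote; since the weights are real this does not affect the conclusion, but the bars should be on the correct side. Also, the hesitant middle passage of your draft (the false start about pairing top against bottom weight spaces) should simply be deleted, since your final orthogonality argument supersedes it cleanly.
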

\begin{proof}
  Let $V$ be a simple module in $\A$.  Since $\dag{H}=H$, we have $\con V$
  is a simple module with character that
  is the conjugate of that of $V^*$.  Since the weights of modules in
  $\A$ are real, $\con V$ and $V^*$ have the same character so they are
  isomorphic.  Then Lemma \ref{L:sesq} applies.
\end{proof}

 \begin{prop}\label{prop:form-Pi}
If  $i\in \{0,...,r-2\}$ then the projective indecomposable $P_i$ has a Hermitian form $(\cdot, \cdot )_{\alpha,\beta} $ making it a  Hermitian $\Ubar$-module where $\alpha$ and $\beta$ are two real non-zero parameters.
\end{prop}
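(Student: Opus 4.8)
The plan is to construct the Hermitian form on $P_i$ explicitly using the weight-space basis summarized in Figure~\ref{fig:P_i}, and then to verify compatibility with $\dagger$ directly on generators. First I would recall from \cite[Proposition 6.1]{CGP2} the precise $\Ubar$-action on the basis vectors $\{w_k^R, w_k^H, w_k^S, w_k^L\}$ of $P_i$. The key structural observation is that the socle and the head of $P_i$ are each isomorphic to $S_i$, and that the $H$-weights occurring in $P_i$ are all real integers, so that (as in the proof of Proposition~\ref{prop:form-Valpha}) $\con{P_i}$ and $P_i^*$ have the same character. Since $P_i$ is indecomposable projective and self-dual up to the data above, one gets $\con{P_i} \simeq P_i^*$, and Lemma~\ref{L:sesq} then guarantees the existence of a nondegenerate compatible sesquilinear form. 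The point of the present proposition over Proposition~\ref{prop:form-Valpha} is that $P_i$ is not simple, so uniqueness up to $\R^*$ fails; instead $\Aut_\cat(P_i)$ is larger, and the space of compatible Hermitian forms is parametrized by the Hermitian elements of the (commutative) endomorphism algebra acting on the form. This is where the two real parameters $\alpha, \beta$ enter.

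Concretely, I would argue as follows. By Lemma~\ref{L:sesq} (first part), a compatible sesquilinear form exists iff $\con{P_i}\simeq P_i^*$; I would establish this isomorphism from the character computation together with the fact that $P_i$ is the projective cover of $S_i$ and $P_i^*$ is the injective hull of $S_i^* \simeq S_i$ (using $\dag H = H$ so characters are preserved under $\con{(-)}$), hence $P_i^* \simeq P_i$. Next, from the matrix computation in the proof of Lemma~\ref{L:sesq}, any compatible sesquilinear form $f$ corresponds to a matrix $A$ with $A^{-1}A^*$ central in the image of $\Ubar$, i.e.\ $A^{-1}A^* \in \Aut_\cat(P_i)$. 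The endomorphism algebra $\End_\cat(P_i)$ is a commutative local $\C$-algebra of dimension $2$ (spanned by $\Id_{P_i}$ and the nilpotent $N$ factoring through $S_i \hookrightarrow P_i \twoheadrightarrow S_i$). Fixing one compatible Hermitian form $f_0$ with matrix $A_0 = A_0^*$, every other compatible form is $f(\cdot,\cdot) = f_0(\cdot, g(\cdot))$ with $g \in \Aut_\cat(P_i)$, and $f$ is Hermitian iff $g^\dagger = g$ with respect to $f_0$, i.e.\ iff $g$ lies in a real $2$-dimensional subspace; writing $g = \alpha\,\Id + \beta\,N'$ for a suitable self-adjoint nilpotent $N'$ and real $\alpha,\beta$, and requiring $g$ invertible (so $\alpha \neq 0$), gives exactly the claimed family $(\cdot,\cdot)_{\alpha,\beta}$ with $\alpha,\beta$ real and $\alpha \neq 0$. (One can rescale so $\beta$ is also taken nonzero to get a genuinely non-block-diagonal form pairing head with socle, matching the statement.)

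For the existence of a Hermitian — not merely sesquilinear — representative $f_0$: starting from any compatible sesquilinear $f$ with matrix $A$, set $A' = A + A^*$ and $A'' = i(A - A^*)$; both are Hermitian and both are matrices of compatible sesquilinear forms (compatibility is $\R$-linear in $A$ and preserved by $A \mapsto A^*$ since $\dagger$ is an antilinear antialgebra involution), and at least one of them is nondegenerate because $A = \tfrac12(A' - iA'')$. This yields the base Hermitian form $f_0$. I would then record the explicit Gram matrix on the basis of Figure~\ref{fig:P_i}: the $\vech$-block and $\vecs$-block pair into the $\vecl,\vecr$ blocks (so that $(\vech_k, \vecr_{k}) $-type entries carry the parameter $\alpha$ and the $(\vecl,\vecr)$-pairings carry $\beta$, with the remaining entries forced by $H$-weight considerations since the form must pair weight $k$ with weight $k$), and check $\dag{E} = F$, $\dag{F} = E$, $\dag{K}=K^{-1}$ on these entries — this is a finite, if tedious, verification.

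The main obstacle is the last step: pinning down the two-dimensionality of $\End_\cat(P_i)$ and identifying exactly which self-adjoint automorphisms are invertible, i.e.\ checking that the Hermitian forms genuinely form the two-real-parameter family claimed (rather than, say, needing both parameters constrained, or admitting more). This requires knowing the submodule structure of $P_i$ well enough to see $\dim\End_\cat(P_i) = 2$ and that the nondegeneracy locus in the $(\alpha,\beta)$-plane is exactly $\{\alpha \neq 0\}$. Everything else is either a character count or a bounded computation on the explicit basis from \cite[Proposition 6.1]{CGP2}.
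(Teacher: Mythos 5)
Your approach is genuinely different from the paper's. The paper proceeds entirely by explicit calculation: it fixes two seed values $(w_{-j-r}^L, w_{-j-r}^L)_{\alpha,\beta}=\alpha$ and $(w_{-i}^H,w_{-i}^H)_{\alpha,\beta}=\beta$, and then propagates the Gram matrix across the basis of Figure~\ref{fig:P_i} using only the relations $(xv,w)=(v,\dag{x}w)$ and $(v,w)=\wb{(w,v)}$ and the explicit $\Ubar$-action from \cite[Proposition 6.1]{CGP2}. Existence is then read off from the computed Gram matrix. You instead invoke Lemma~\ref{L:sesq} to get a compatible sesquilinear form from $\con{P_i}\simeq P_i^*$, and then try to symmetrize. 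Your abstract route is conceptually cleaner in that it explains \emph{why} two real parameters appear (they track Hermitian elements of the two-dimensional local algebra $\End_\cat(P_i)$), something the paper's computation does not explain, and it is the natural continuation of Lemma~\ref{L:sesq} beyond the simple case where Schur's lemma is no longer available.

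However, there is a genuine gap in your symmetrization step. From $A=\tfrac12\bigl(A'-iA''\bigr)$ with $A$ nondegenerate, you conclude that at least one of $A'=A+A^*$ and $A''=i(A-A^*)$ is nondegenerate. This inference is false in general: for $A=\left(\begin{smallmatrix}1&0\\0&i\end{smallmatrix}\right)$ one has $A'=\left(\begin{smallmatrix}2&0\\0&0\end{smallmatrix}\right)$ and $A''=\left(\begin{smallmatrix}0&0\\0&-2\end{smallmatrix}\right)$, both degenerate, even though $A$ is invertible. The correct assertion is weaker: there exists $\lambda\in\C^*$ such that $\lambda A+\wb\lambda A^*$ is nondegenerate Hermitian. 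Proving that requires either a genericity argument specific to the two-dimensional family $\{\lambda A+\wb\lambda A^*\}$ (e.g.\ observing that the determinant is not identically zero as a real polynomial in $\operatorname{Re}\lambda,\operatorname{Im}\lambda$, which one would have to check for $P_i$), or, most simply, exhibiting a concrete nondegenerate Hermitian form — which is exactly the computation the paper carries out. So as written your step ``at least one of them is nondegenerate'' does not close; either supply the genericity argument or fall back to the explicit Gram computation. Apart from this, the uniqueness/parametrization discussion via $\End_\cat(P_i)\cong\C[N]/(N^2)$ and self-adjoint automorphisms is correct in spirit, and your determination that the nondegeneracy locus is $\{\alpha\neq 0\}$ is consistent with the later Proposition on $S_i\cong P_i/\mathrm{rad}(\cdot,\cdot)_{0,\beta}$.
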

 \begin{proof}
 Recall from \cite[Proposition 6.1]{CGP2} that the action of $\Ubar$ on the basis $\{w_k^Y \} $ of $P_i$ involves the quantities
$ \gamma_{n,k} = [k][n-k+1] $ .

Now define
\begin{equation}
(w_{-j-r}^L, w_{-j-r}^L)_{\alpha,\beta}=\alpha \hspace{.5in} (w_{-i}^H, w_{-i}^H)_{\alpha,\beta}=\beta
\ .
\end{equation}
If the form is Hermitian and $(xv,w)_{\alpha,\beta}=(v,x^{\dagger} w)_{\alpha,\beta}$, then
\begin{align*}
(w_{j-2k-r}^L, w_{j-2k-r}^L)_{\alpha,\beta} &= \left(\frac{E^{j-k}w_{-j-r}^L}{\displaystyle \prod_{m=k+1}^j -\gamma_{j,m}}, \frac{E^{j-k}w_{-j-r}^L}{\displaystyle \prod_{m=k+1}^j -\gamma_{j,m}} \right)_{\alpha,\beta} \\
&= \frac{1}{\displaystyle \prod_{m=k+1}^j \overline{\gamma_{j,m}} \gamma_{j,m}}(E^{j-k}w_{-j-r}^L , E^{j-k}w_{-j-r}^L)_{\alpha,\beta} \\
&= \frac{1}{\displaystyle \prod_{m=k+1}^j \overline{\gamma_{j,m}} \gamma_{j,m}}(w_{-j-r}^L , F^{j-k} E^{j-k}w_{-j-r}^L)_{\alpha,\beta} \\
&= \frac{1}{\displaystyle \prod_{m=k+1}^j -\overline{\gamma_{j,m}} }(w_{-j-r}^L , w_{-j-r}^L)_{\alpha,\beta}
%\\ &
\;\; = \;\; \frac{\alpha}{\displaystyle \prod_{m=k+1}^j -\overline{\gamma_{j,m}} } .
\end{align*}

Next note that
\begin{equation*}
(w_{-i}^S, w_{-i}^S)_{\alpha,\beta} = (Ew_{j-r}^L, Ew_{j-r}^L)_{\alpha,\beta}=(w_{j-r}^L,FE w_{j-r}^L)_{\alpha,\beta}=(w_{j-r}^L,Fw_{-i}^S)_{\alpha,\beta}=0 \ .
\end{equation*}
Similarly,

\begin{align*}
(w_{i-2k}^S, w_{i-2k}^S)_{\alpha,\beta} &= \left(\frac{E^{i-k}w_{-i}^S}{\displaystyle \prod_{m=k+1}^i \gamma_{j,m}}, \frac{E^{i-k}w_{-i}^S}{\displaystyle \prod_{m=k+1}^i \gamma_{j,m}} \right)_{\alpha,\beta} \\
&= \frac{1}{\displaystyle \prod_{m=k+1}^i \overline{\gamma_{i,m}} \gamma_{i,m}}(E^{i-k}w_{-i}^S , E^{i-k}w_{-i}^S)_{\alpha,\beta} \\
&= \frac{1}{\displaystyle \prod_{m=k+1}^i \overline{\gamma_{i,m}} \gamma_{i,m}}(w_{-i}^S , F^{i-k} E^{i-k}w_{-i}^S)_{\alpha,\beta} \\
&= \frac{1}{\displaystyle \prod_{m=k+1}^i \overline{\gamma_{i,m}} }(w_{-i}^S , w_{-i}^S)_{\alpha,\beta}
%\\ &
\;\; = \;\; 0 .
\end{align*}

Next note that
\begin{equation}
(Fw_{-i}^H,Fw_{-i}^H)_{\alpha,\beta}=(w_{j-r}^L,w_{j-r}^L)_{\alpha,\beta}
\end{equation}
and
\begin{equation}
(Fw_{-i}^H,Fw_{-i}^H)_{\alpha,\beta}
= (w_{-i}^H,EFw_{-i}^H)_{\alpha,\beta}
=(w_{-i}^H,Ew_{j-r}^L)_{\alpha,\beta}
=(w_{-i}^H,w_{-i}^S)_{\alpha,\beta} \ .
\end{equation}
Thus
\begin{equation}
(w_{-i}^H,w_{-i}^S)_{\alpha,\beta}=(w_{j-r}^L,w_{j-r}^L)_{\alpha,\beta}=
\frac{\alpha}{\displaystyle \prod_{m=1}^k -\overline{\gamma_{j,m}}} .
\end{equation}

Next note that
\begin{align*}
(w_{-j+r}^R, w_{-j+r}^R)_{\alpha,\beta} &= \left(\frac{E^{i+1}w_{-i}^H}{\displaystyle \prod_{m=1}^i \gamma_{i,m}}, \frac{E^{i+1}w_{-i}^H}{\displaystyle \prod_{m=1}^i \gamma_{i,m}} \right)  = \frac{1}{\displaystyle \prod_{m=1}^i \overline{\gamma_{i,m}}}(w_{-i}^H , F^{i+1}w_{-j+r}^R)_{\alpha,\beta} \\
&= \frac{1}{\displaystyle \prod_{m=1}^i \overline{\gamma_{i,m}}}(w_{-i}^H , w_{-i}^S)_{\alpha,\beta}  =\frac{\alpha}{\displaystyle \prod_{m=1}^i \overline{\gamma_{i,m}} \displaystyle \prod_{m=1}^j -\overline{\gamma_{j,m}}} .
\end{align*}
From this we compute
\begin{align*}
(w_{r-j+2k}^R,w_{r-j+2k}^R)_{\alpha,\beta} &= (E^k w_{r-j}^R, E^k w_{r-j}^R)_{\alpha,\beta}  =  (w_{r-j}^R, F^k w_{r-j+2k}^R)_{\alpha,\beta} \\
&= (w_{r-j}^R, \prod_{m=j}^k -\gamma_{j,m} w_{r-j}^R)_{\alpha,\beta} = \frac{\prod_{m=j}^k -\gamma_{j,m}}{\prod_{m=1}^i \overline{\gamma_{i,m}} \prod_{m=1}^j -\overline{\gamma_{j,m}}}\alpha .
\end{align*}

Next note that
\begin{align*}
(w_{i-2k}^H,w_{i-2k}^S)_{\alpha,\beta} &=
\left(\frac{E^{i-k}w_{-i}^H}{\displaystyle \prod_{m=1}^k \gamma_{i,m}}+\Gamma, \frac{E^{i-k}w_{-i}^S}{\displaystyle \prod_{m=1}^i \gamma_{i,m}} \right)_{\alpha,\beta}  = \frac{1}{\displaystyle \prod_{m=1}^k \overline{\gamma_{i,m}}}(w_{-i}^H , F^{i-k}w_{i-2k}^S)_{\alpha,\beta} \\
&= \frac{1}{\displaystyle \prod_{m=1}^k \overline{\gamma_{i,m}}}(w_{-i}^H , w_{-i}^S)_{\alpha,\beta}  = \frac{\alpha}{\displaystyle \prod_{m=1}^k \overline{\gamma_{i,m}} \prod_{m=1}^j -\overline{\gamma_{j,m}}}
\end{align*}
where in the first equality, $\Gamma$ contains $w^S$ terms which were already show to kill terms on the right.

Next we compute
\begin{equation}
(w_{i-2k}^H,E^{i-k}w_{-i}^H)_{\alpha,\beta}=(F^{i-k}w_{i-2k}^H,w_{-i}^H)_{\alpha,\beta}=(w_{-i}^H,w_{-i}^H)_{\alpha,\beta}=\beta \ .
\end{equation}
But
\begin{equation}
(w_{i-2k}^H,w_{i-2k}^H)_{\alpha,\beta} = (w_{i-2k}^H,e_{i-k}(\gamma_{i,i},\ldots,\gamma_{i,k+1})w_{i-2k}^H+
e_{i-k-1}(\gamma_{i,i},\ldots,\gamma_{i,k+1})w_{i-2k}^S)_{\alpha,\beta}
\end{equation}
where the $e_{i-k}$ and $e_{i-k-1}$ are elementary symmetric functions.  This could be verified with a straightforward induction argument.
Thus
\begin{equation}
(w_{i-2k}^H,E^{i-k}w_{-i}^H)_{\alpha,\beta}=e_{i-k}(w_{i-2k}^H,w_{i-2k}^H)_{\alpha,\beta}+e_{i-k-1}(w_{i-2k}^H,w_{i-2k}^S)_{\alpha,\beta} \ .
\end{equation}
Thus
\begin{equation}
(w_{i-2k}^H,w_{i-2k}^H)_{\alpha,\beta} =
\frac{\left( \beta - \frac{e_{i-k-1}(\gamma_{i,i},\ldots \gamma_{i,k+1} )\alpha}{\displaystyle \prod_{m=1}^k \overline{\gamma_{i,m}} \displaystyle \prod_{m=1}^j -\overline{\gamma_{j,m}}} \right)}{e_{i-k}(\gamma_{i,i},\ldots,\gamma_{i,k+1})} .
\end{equation}
 \end{proof}

\begin{rem}
Consider the submodule $\Delta_{j+r}$ of $P_i$
defined as follows
\begin{equation} \label{Deltadef}
\Delta_{j+r}=\{w_{j+r}^R, \ldots, w_{j-r}^R, w_i^S, \ldots, w_{-i}^S \} \ .
\end{equation}
It inherits the form $(\cdot,\cdot)_{\alpha,\beta}$ from $P_i$.  Note
that this form is degenerate on $\Delta_{j+r}$.  The submodule of
$\Delta_{j+r}$ spanned by the vectors $\{w_i^S, \ldots, w_{-i}^S \}$
is isomorphic to the simple module $S_i$ but is actually a totally
isotropic subspace of $P_i$.  So we should not expect an abelian
structure on a Hermitian category.
\end{rem}

\begin{prop}
The simple module $S_i$ is a quotient of $P_i$ by the radical of a specialization of the Hermitian form $(\cdot, \cdot)_{\alpha,\beta}$:
\[
S_i \cong P_i/rad(\cdot,\cdot)_{0,\beta} \ .
\]
\end{prop}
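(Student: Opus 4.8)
The plan is to specialize the two-parameter Hermitian form $(\cdot,\cdot)_{\alpha,\beta}$ of Proposition~\ref{prop:form-Pi} at $\alpha=0$ (keeping $\beta\in\R^*$), to compute its radical explicitly, and then to recognize that radical as the Jacobson radical of the indecomposable projective $P_i$, so that the quotient is automatically the simple top $S_i$.

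\textbf{Computing $\operatorname{rad}(\cdot,\cdot)_{0,\beta}$.} Since $\dag H=H$, the form is $H$-invariant, so weight spaces of distinct weights are orthogonal; the weights occurring among the $w^L$-vectors, among the $w^H$- and $w^S$-vectors, and among the $w^R$-vectors lie in three pairwise disjoint intervals (here $j=r-2-i$), so the only pairings that can be nonzero are the ones already recorded in the proof of Proposition~\ref{prop:form-Pi}. Substituting $\alpha=0$ in those formulas, each of $(w^L_\bullet,w^L_\bullet)$, $(w^R_\bullet,w^R_\bullet)$, $(w^S_\bullet,w^S_\bullet)$ and $(w^H_\bullet,w^S_\bullet)$ vanishes (each is identically zero or a scalar multiple of $\alpha$), whereas
\[
(w^H_{i-2k},w^H_{i-2k})_{0,\beta}=\frac{\beta}{e_{i-k}(\gamma_{i,i},\dots,\gamma_{i,k+1})},\qquad 0\le k\le i.
\]
Here $e_{i-k}$ of $i-k$ arguments is the full product $\gamma_{i,i}\cdots\gamma_{i,k+1}$, and each $\gamma_{i,m}=[m][i-m+1]$ is a product of nonzero quantum integers since $1\le m\le i\le r-2$; hence this scalar is nonzero for all $k$. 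Therefore the restriction of $(\cdot,\cdot)_{0,\beta}$ to $\Span\{w^H_{-i},\dots,w^H_i\}$ is nondegenerate, and
\[
\operatorname{rad}(\cdot,\cdot)_{0,\beta}=\Span\{w^L_{-j-r},\dots,w^L_{j-r},\ w^S_{-i},\dots,w^S_i,\ w^R_{-j+r},\dots,w^R_{j+r}\},
\]
a subspace of dimension $2(j+1)+(i+1)=2r-(i+1)$.

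\textbf{Identifying the radical with $\operatorname{rad}(P_i)$.} The radical of any $\dagger$-compatible form is a $\Ubar$-submodule: if $(v,\cdot)_{0,\beta}\equiv0$ then $(xv,v')_{0,\beta}=(v,\dag x\,v')_{0,\beta}=0$ for every $x\in\Ubar$ and $v'\in P_i$, using $\dag{\dag x}=x$; one can also read this off the weight-space picture in Figure~\ref{fig:P_i}. Since $P_i$ is the indecomposable projective cover of the simple module $S_i$, it has a unique maximal submodule, namely its Jacobson radical $\operatorname{rad}(P_i)$, with $P_i/\operatorname{rad}(P_i)\cong S_i$ and hence $\dim\operatorname{rad}(P_i)=\dim P_i-\dim S_i=2r-(i+1)$. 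As $(w^H_{-i},w^H_{-i})_{0,\beta}=\beta\ne0$, the submodule $\operatorname{rad}(\cdot,\cdot)_{0,\beta}$ is proper, so it is contained in $\operatorname{rad}(P_i)$; since the two submodules have the same dimension $2r-(i+1)$, they coincide, and therefore $P_i/\operatorname{rad}(\cdot,\cdot)_{0,\beta}=P_i/\operatorname{rad}(P_i)\cong S_i$.

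The one genuine computation is the first step: carrying the chain of identities from Proposition~\ref{prop:form-Pi} through the substitution $\alpha=0$ and verifying that the surviving diagonal scalars $\beta/e_{i-k}(\gamma_{i,i},\dots,\gamma_{i,k+1})$ do not vanish --- which reduces to the elementary fact that $[m]\ne0$ for $1\le m\le r-1$. The rest --- that the radical of an invariant form is a submodule, and the dimension count against $\operatorname{rad}(P_i)$ --- is formal, so I do not expect a serious obstacle.
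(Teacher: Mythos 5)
Your argument is correct and takes essentially the same approach as the paper: specialize $\alpha=0$, read off from the formulas in Proposition~\ref{prop:form-Pi} that the radical is exactly $\Span\{w^L_\bullet,\,w^S_\bullet,\,w^R_\bullet\}$, and identify the quotient with $S_i$. Where the paper concludes directly from the dimension ($i+1$) and highest weight ($i$) of the quotient, you instead match the radical of the form with the Jacobson radical of $P_i$ via a dimension count and the uniqueness of the maximal submodule of an indecomposable projective; these are equivalent one-line finishes.
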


\begin{proof}
The preceding analysis shows that all vectors $w^L,w^S,w^R$ are in the radical, while the vectors $w^H$ are not.
The quotient of $P_i$ by this radical yields a module of dimension $i+1$ with highest weight $i$ which is isomorphic to $S_i$.
\end{proof}

Let $\catHe$ be the full subcategory of %real weights
$\cat$ whose objects are Hermitian  $\Ubar$-modules.
%Hermitian modules and morphisms all $\Ubar$-module maps and
Let $\catdHe$ be the full subcategory of $\catHe$
%formed by objects whose underlying $\Ubar$-module belongs to
whose objects are in $\catd$.  We will need the
following lemmas to define a tensor product in $\catdHe$.
\begin{lemma}\label{L:indecHe}
  Any module in $\catd$ has a Hermitian structure.
\end{lemma}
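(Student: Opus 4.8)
The plan is to reduce the assertion to a short list of indecomposable modules and to verify that each of them carries a Hermitian structure. Recall that $\catd$ consists, by definition, of all direct sums of retracts of tensor products of objects of $\A\cup\A^{*}$, and that the category $\cat$ of finite dimensional weight $\Ubar$-modules satisfies Krull--Schmidt. Using the classification of simple and of indecomposable projective weight modules recalled in Section~\ref{S:QUantSL2H}, together with the tensor product decompositions in $\cat$ established in \cite{CGP2} (in particular \cite[Proposition 8.4]{CGP2}), every tensor product of objects of $\A\cup\A^{*}$ is isomorphic to a direct sum of modules of the three types $V_\alpha$ (with $\alpha\in(\R\setminus\Z)\cup r\Z$), $S_n\otimes\C^H_{ar}$, and $P_i\otimes\C^H_{kr}$; hence so is every retract of such a tensor product, and therefore every object of $\catd$. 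Since the orthogonal direct sum of compatible nondegenerate Hermitian forms is again one — the $\Ubar$-action on a direct sum is block diagonal, so its adjoint with respect to a block-diagonal form is block diagonal — it suffices to equip each of these three types with a Hermitian structure.

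The simple modules $V_\alpha$ with $\alpha\in(\R\setminus\Z)\cup r\Z$ and $S_n\otimes\C^H_{ar}$ are handled by Proposition~\ref{prop:form-Valpha}, whose proof uses only that the module is simple and has real weights — which holds here, the weights of $S_n\otimes\C^H_{ar}$ being the integers $n-2m+ar$. For $P_i$ itself, Proposition~\ref{prop:form-Pi} provides a compatible nondegenerate Hermitian form, so $\con{P_i}\cong P_i^{*}$ by the first part of Lemma~\ref{L:sesq}. For the shifts $P_i\otimes\C^H_{kr}$ the naive tensor product of the two forms fails to be $\dagger$-compatible when $k$ is odd (because then $q^{kr}=-1$), so instead I would pass through the criterion of Lemma~\ref{L:sesq}. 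Using the identities $\Delta(\dag x)=(\dagger\otimes\dagger)(\tau(\Delta x))$ and $S(\dag x)=\dagp{S(x)}$ for the conjugation on $\Ubar$, one checks that $\con{(V\otimes W)}\cong\con V\otimes\con W$ for all $V,W$; taking $V=P_i$, $W=\C^H_{kr}$, using $\con{(\C^H_{kr})}\cong(\C^H_{kr})^{*}$, and inserting the braiding of $\cat$ together with the canonical isomorphism $(V\otimes W)^{*}\cong W^{*}\otimes V^{*}$, one obtains $\con{(P_i\otimes\C^H_{kr})}\cong(P_i\otimes\C^H_{kr})^{*}$. By the first part of Lemma~\ref{L:sesq}, $P_i\otimes\C^H_{kr}$ then carries a compatible nondegenerate sesquilinear form.

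It remains to promote a compatible nondegenerate sesquilinear form to a compatible nondegenerate \emph{Hermitian} one on a module which need not be simple; this is the step that goes beyond \cite{Kir96,Wen98} and beyond the second part of Lemma~\ref{L:sesq} (which only treats the simple case), and I expect it — together with the module-theoretic input from \cite{CGP2} used in the reduction above — to be the crux. The mechanism is elementary: if $f$ is such a form with matrix $A$ in a fixed basis, then $g(v,w):=\overline{f(w,v)}$ is again a $\dagger$-compatible sesquilinear form (this uses $\dagp{\dag x}=x$), with matrix $A^{*}$; hence for every $\zeta\in\C^{*}$ the form $h_\zeta:=\zeta f+\bar\zeta g$ is $\dagger$-compatible and Hermitian, with matrix $\zeta A+\bar\zeta A^{*}$. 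Since $u\mapsto\det(A+uA^{*})$ is a nonzero polynomial (its value at $u=0$ is $\det A\neq0$) and $\bar\zeta/\zeta$ runs over the unit circle as $\zeta$ does, one may choose $\zeta$ so that $\zeta A+\bar\zeta A^{*}$ is invertible, and then $h_\zeta$ is a Hermitian structure. Applying this to $P_i\otimes\C^H_{kr}$ completes the three cases, and hence the proof. Finally, one should record — consistently with the Remark after Proposition~\ref{prop:form-Pi} — that the forms obtained on the projective summands are necessarily indefinite, since $S_i\subset P_i$ is a nonzero isotropic submodule; this is the reason $\catdHe$ cannot be given the structure of an abelian Hermitian category.
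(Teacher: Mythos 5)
Your proof is correct, and it takes a genuinely different route from the paper at the key step. Both arguments reduce, via orthogonal direct sums, to the indecomposable objects of $\catd$, which (by the decomposition rules of \cite{CGP2}) are exactly the modules $V\otimes\C^H_{kr}$ with $V\in\A$ and $k\in\Z$; and both handle the simple ones by the real-weight argument of Proposition~\ref{prop:form-Valpha}. Where you diverge is on the twisted projectives $P_i\otimes\C^H_{kr}$: the paper simply \emph{writes down} the form, namely $\bp{x,y}_{V'}=a\bp{x,q^{krH/2}y}_{V}$ with $a$ a unit-modulus square root of $\brk{K^{-kr}}_V$, and checks Hermiticity and $\dagger$-compatibility directly by hand. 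You instead establish $\con{(P_i\otimes\C^H_{kr})}\cong(P_i\otimes\C^H_{kr})^*$ via $\con{(V\otimes W)}\cong \con V\otimes\con W$, invoke the first (non-simple) half of Lemma~\ref{L:sesq} to get a compatible sesquilinear form $f$, and then promote it to a Hermitian one by the symmetrization $h_\zeta=\zeta f+\bar\zeta g$ with $g(v,w)=\overline{f(w,v)}$, choosing $\zeta$ on the unit circle so that $\det(\zeta A+\bar\zeta A^*)\neq0$. This symmetrization step is a genuine addition not present in the paper or in the simple-module half of Lemma~\ref{L:sesq}, and it is a clean, general-purpose mechanism that would work for any weight module with $\con V\cong V^*$; what it buys is that you never need an explicit formula for the form. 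What the paper's explicit twist buys is concreteness and the observation (which you correctly note) of exactly why the naive product form fails: $q^{kr}=(-1)^k$. Your verification that $g$ is again $\dagger$-compatible (using $\dagp{\dag x}=x$) and that the determinant polynomial $\mu\mapsto\det(A+\mu A^*)$ is nonzero at $\mu=0$ are both correct, so the argument closes.
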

\begin{proof}
  Using orthogonal direct sums, it is sufficient to show that any
  indecomposable module in $\catd$ has a Hermitian structure.  The
  category $\catd$ has indecomposable modules isomorphic to
  $V'=V\otimes\C^H_{kr}$ for some $(V,k)\in\A\times\Z$ (see
  \cite{CGP2} where the tensor decomposition rules are described).
  Proposition \ref{prop:form-Valpha} and Proposition
  \ref{prop:form-Pi} ensure that any $V\in\A$ has a Hermitian
  structure.  Choose such a Hermitian form.  As a complex vector
  space, $V \cong V\otimes\C^H_{kr}=V'$, but they are different as representations. For example,
  $\rho_{V'}(E)=q^{kr}\rho_{V}(E)$.  We can consider the Hermitian
  form on $V'$ given by $\bp{x,y}_{V'}=a\bp{x,q^{krH/2}y}_{V}$ where
  $a=\wb a^{-1}$ is a square root of $\brk{K^{-kr}}_V$.  Note that the expression
  $\brk{K^{-kr}}_V$ makes sense since the central element $K^{-kr}$
  acts as a scalar operator on $V$.  The form is Hermitian since
  $$\bp{y,x}_{V'}=a\bp{y,q^{krH/2}x}_{V}=\wb a\bp{a^{-2}q^{-krH/2}y,x}_{V}=\wb
  a\bp{q^{krH/2}y,x}_{V}=\wb{\bp{x,y}_{V'}}.$$ The second
  equality comes from $\dagp{q^{krH/2}}=q^{-krH/2}$ and
  $a=\wb a\, \wb a^{-2}$. The third equality follows from:
  \[
  \rho_V(a^{-2}q^{-krH/2})=\rho_V(K^{kr}q^{-krH/2})=\rho_V(q^{krH/2}).
  \]

Next we check the compatibility of this form with $\Ubar$.
  $$\bp{y,\rho_{V'}(E)x}_{V'}
  =a\bp{y,q^{kr}\rho_{V}(E)q^{krH/2}x}_{V}
  =a\bp{y,q^{krH/2}\rho_{V}(E)x}_{V}
  =a\bp{\rho_{V}(E^\dagger)y,q^{krH/2}x}_{V}$$
  $$
  =a\bp{\rho_{V}(F)y,q^{krH/2} x}_{V}
  =\bp{\rho_{V'}(F)y,x}_{V'}
  =\bp{\rho_{V'}(E^\dagger)y,x}_{V'} .
  $$
  Thus $\rho_{V'}(E)^\dagger= \rho_{V'}(E^\dagger)$. Similar
  computations for $F$, $K$ and $H$ show that the form is compatible with $\Ubar$.
  % $$\bp{\cdot,\rho_{V'}(F)\cdot}_{V'}
  % =a\bp{\cdot,\rho_{V}(F)q^{krH/2}\cdot}_{V}
  % =a\bp{\cdot,q^{-kr}q^{krH/2}\rho_{V}(F)\cdot}_{V}
  % =a\bp{q^{kr}\rho_{V}(F^\dagger)\cdot,q^{krH/2}\cdot}_{V}$$
  % $$
  % =a\bp{q^{kr}\rho_{V}(E)\cdot,\cdot}_{V}
  % =\bp{\rho_{V'}(E)\cdot,\cdot}_{V'}
  % =\bp{\rho_{V'}(F^\dagger)\cdot,\cdot}_{V'}
  % $$
\end{proof}
% As a corollary of this lemma we see that the forgetfull functor
% $\catdHe\to\catd$ is essentially surjective
\begin{lemma}\label{L:catHeKS}
  Any Hermitian module $V$ in $\catdHe$ splits as an orthogonal direct
  sum of Hermitian indecomposable modules.
\end{lemma}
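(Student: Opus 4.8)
The plan is to induct on $\dim_\C V$; the base case is $V$ indecomposable, where there is nothing to do. For the inductive step it suffices to establish the following claim: \emph{if $V\in\catdHe$ is decomposable as a $\Ubar$-module, then it has a submodule $W$ with $0\ne W\ne V$ on which the Hermitian structure $f$ is non-degenerate.} Granting the claim, $W^{\perp}:=\{v\in V:f(w,v)=0\ \forall w\in W\}$ is again a submodule — if $f(w,v)=0$ for all $w\in W$ then $f(w,xv)=f(\dag x\,w,v)=0$ because $\dag x\,w\in W$ — and $W\cap W^{\perp}=\operatorname{rad}(f|_W)=0$ while $\dim W+\dim W^{\perp}=\dim V$ by non-degeneracy of $f$, so $V=W\oplus W^{\perp}$ is an orthogonal direct sum. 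Each summand carries the restricted form, which is non-degenerate and $\dagger$-compatible, hence is a Hermitian module; being a direct summand of $V\in\catd$ it also lies in $\catd$, hence in $\catdHe$. Applying the induction hypothesis to $W$ and $W^{\perp}$, both of strictly smaller dimension, completes the step.

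To prove the claim I would work in $A:=\End_\cat(V)$, a finite-dimensional $\C$-algebra equipped with the antilinear anti-involution $g\mapsto g^{\dagger}$ of Lemma~\ref{daglemma} (taken with $V$ in all three slots and $f_V=f$). It is enough to produce an idempotent $e\in A$ with $e^{\dagger}=e$ and $e\notin\{0,1\}$: then $W:=e(V)$ is a submodule, $(1-e)(V)\subseteq W^{\perp}$ since $f(ev,(1-e)v')=f(v,e^{\dagger}(1-e)v')=f(v,e(1-e)v')=0$, and since both spaces have dimension $\dim V-\dim W$ we get $(1-e)(V)=W^{\perp}$, so $f|_W$ is non-degenerate. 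Since $V$ is decomposable, $A$ is not local, i.e. $\bar A:=A/\operatorname{rad}(A)$ is not isomorphic to $\C$; moreover $\operatorname{rad}(A)$ is nilpotent and $\dagger$-stable (an anti-automorphism preserves the largest nilpotent ideal), so $\bar A$ is semisimple and inherits an antilinear anti-involution $\bar\dagger$.

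Now write $\bar A\cong\prod_k\Mat_{n_k}(\C)$. The key structural input, and the place where the hypothesis $V\in\catdHe$ is genuinely used, is that every indecomposable summand $M$ of $V$ lies in $\catd$ and hence, by Lemma~\ref{L:indecHe} together with Lemma~\ref{L:sesq}, satisfies $\con M\cong M^{*}$. Because $\con{(-)}$ and $(-)^{*}$ are both contravariant dualities on $\cat$, this prevents $\bar\dagger$ from interchanging two of the matrix blocks: such an interchange would force $\con M\cong N^{*}$ for two \emph{non-isomorphic} indecomposable summands $M,N$ of $V$ (those paired non-degenerately by $f$), and combined with $\con M\cong M^{*}$ this gives $N\cong M$, a contradiction. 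Hence $\bar\dagger$ stabilizes each block and acts on $\Mat_{n_k}(\C)$ as $X\mapsto G_k^{-1}X^{*}G_k$ for some non-degenerate Hermitian $G_k$. If some proper non-empty union of blocks is $\bar\dagger$-stable — in particular if there is more than one block — its identity is a central, hence self-adjoint, idempotent of $\bar A$ different from $0$ and $1$; otherwise $\bar A=\Mat_n(\C)$ with $n\ge 2$ (the case $n=1$ being excluded), and the $G_n$-orthogonal projection onto a $G_n$-non-degenerate line — which exists because the non-degenerate form $G_n$ is nonzero — is a self-adjoint idempotent $\ne 0,1$. Finally I would lift this $\bar e\in\bar A$ to $A$: idempotents lift along $A\twoheadrightarrow\bar A$ because the kernel is nil, and if $e_0$ is any lift then $e_0$ and $e_0^{\dagger}$ are idempotents with the same image $\bar e$ (as $\bar\dagger$ fixes $\bar e$), hence conjugate by some $u\in 1+\operatorname{rad}(A)$; replacing $e_0$ by $a e_0 a^{-1}$ for a suitable $a\in 1+\operatorname{rad}(A)$ manufactured from $u$ produces the desired $\dagger$-fixed idempotent $e$.

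I expect the two delicate points to be exactly those flagged above: ruling out the "hyperbolic" block-interchange, which rests on the self-conjugate-duality $\con M\cong M^{*}$ of every object of $\catd$ and has no counterpart in the strictly positive-dimensional semisimple setting of \cite{Kir96, Wen98}; and carrying out the self-adjoint lifting of idempotents in a possibly indefinite situation, where one cannot simply average over a compact unitary group as in the $C^{*}$-case. These are the anticipated modifications of the arguments of Kirillov and Wenzl.
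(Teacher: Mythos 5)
Your proof is correct but takes a genuinely different route from the paper's. The paper does not pass through $\End_{\cat}(V)$ at all: starting from a Krull--Schmidt decomposition $V=\bigoplus_i W_i$, it sets $W_i'=(\bigoplus_{j\neq i}W_j)^{\perp}$, shows $\con{(W_i')}\simeq W_i^{*}\simeq \con{(W_i)}$ so that $W_i'\simeq W_i$, chooses an isomorphism $h\colon W_i\to W_i'$ with $(h(\cdot),\cdot)=(\cdot,\cdot)_{W_i}$ Hermitian, and then perturbs: for generic $\alpha\in\R$ the Gram matrix of the ambient form on $W_i''=(h+\e^{i\alpha}\Id)(W_i)$ is $A+A'+2\cos\alpha\,B$, non-degenerate as soon as $-2\cos\alpha$ avoids the finitely many eigenvalues of $B^{-1}(A+A')$. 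This gives an \emph{indecomposable} Hermitian submodule directly, and one finishes by induction on $\dim V$ as you do. Your argument instead finds a nontrivial $\dagger$-fixed idempotent in $\End_{\cat}(V)$ via the block structure of $\End_{\cat}(V)/\operatorname{rad}$, and the crucial input — $\con M\cong M^{*}$ for every indecomposable $M$ of $\catd$, which rules out the blocks being hyperbolically interchanged — is exactly the same structural fact the paper exploits. The trade-offs: the paper's perturbation is entirely explicit and avoids lifting idempotents; your argument is more structural and produces an orthogonal summand in one shot rather than one indecomposable at a time, but it needs the lemma that a $\bar\dagger$-fixed idempotent lifts to a $\dagger$-fixed idempotent modulo a $\dagger$-stable nil ideal (true in characteristic $0$: with $n=e_0-e_0^{\dagger}$ and $h=e_0+e_0^{\dagger}-1$ one checks $e_0$ commutes with $h^2=1-n^2$ and $e:=e_0e_0^{\dagger}h^{-2}$ works), which you gesture at but do not spell out. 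Two small slips worth flagging: $\con{(-)}$ is covariant, not contravariant, though your argument never uses contravariance, only the isomorphism $\con M\cong N^{*}$ on isotypic pieces furnished by the non-degenerate pairing; and the existence of a $G_n$-non-isotropic line deserves the one-line proof (polarization shows a nonzero Hermitian form cannot be totally isotropic).
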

\begin{proof}
  We prove that $V$ has an indecomposable Hermitian submodule $P$,
  then the proof follows by induction since the non degeneracy of
  $(\cdot,\cdot)$ on $P$ implies that $P^\perp$ is a complementary submodule
  of $P$ in $V$.

  Consider a direct sum decomposition $V=\bigoplus_iW_i$ with $W_i$
  indecomposable and let $W'_i=(\bigoplus_{j\neq i}W_j)^\perp$.  Then the proof of
  Lemma \ref{L:sesq} implies that $\con {(W'_i)}\simeq W_i^*$.
  But as for any indecomposable module of
  $\catdHe$ we have $\con {(W_i)}\simeq W_i^*$ so $W'_i\simeq W_i$. If
  we choose such an isomorphism $f:W_i\to W'_i$, then
  $(f(\cdot),\cdot)$ is a compatible sesquilinear form on $W_i$.
  Lemma \ref{L:indecHe} ensures that $W_i$ has a compatible Hermitian
  form $(\cdot,\cdot)_{W_i}$. Thus there exists $g\in\Aut_\cat(W_i)$
  such that  $(f(\cdot),\cdot)=(g(\cdot),\cdot)_{W_i}$: Then
  $h=fg^{-1}:W_i\to W'_i$ is an isomorphism such that
  $(h(\cdot),\cdot)=(\cdot,\cdot)_{W_i}$ is Hermitian.  For
  $\alpha\in\R$, let $W_i''=(h+\e^{i\alpha}\Id)(W_i)$.  Fix a basis
  $(e)$ of $W_i$, $(e')=h((e))$ and $(e'')=(h+\e^{i\alpha}\Id)((e))$.
  Let $B=\Mat_{(e)}((\cdot,\cdot)_{W_i})$,
  $A=\Mat_{(e)}((\cdot,\cdot))$, $A'=\Mat_{(e')}((\cdot,\cdot))$ and
  $A''=\Mat_{(e'')}((\cdot,\cdot))$.  Then
  $A''_{ij}=(e''_i,e''_j)=(e'_i+\e^{i\alpha}e_i,e'_j+\e^{i\alpha}e_j)
  =A'_{ij}+A_{ij}+\e^{-i\alpha}\wb{B_{j,i}}+\e^{i\alpha}{B_{i,j}}
  =A'_{ij}+A_{ij}+2\cos\alpha B_{i,j}$ so that
  $A''=A+A'+2\cos\alpha B=B(B^{-1}(A+A')+2\cos\alpha)$ which is non-degenerate
  when $-2\cos\alpha$ is not an eigenvalue of
  $B^{-1}(A+A')$.  Then $W_i''$ is an indecomposable Hermitian
  submodule of $V$.
  % then for generic
  % $\lambda\in\C$ is it true that
  % $(f+\lambda\Id)(W_i)$ is a indecomposable Hermitian sub-module
  % *****???? \BP{I think yes but I don't know how to show it. It is
  %   more or less equivalent to the following linear algebra problem :
  %   if $A,B,C$ are matrix with
  %   $B$ invertible, then $\det(A+tB+t^2C)\in\R[t]$ can't be $0$.}
\end{proof}
Note that the above statement is false in $\catHe$. For
example, one can check that $V_{1+i}\oplus V_{1-i}$ has a Hermitian structure
but $V_{1\pm i}$ does not.

% The category $\cat$ has a maximal block decomposition
% $\cat=\bigoplus_i\cat_i$ where for any $(V,W)\in\cat_i\times \cat_j$,
% $\Hom_\cat(V,W)\neq\qn0\implies i=j$.  Any central element $z$ of
% $\UsltH$ induces a natural morphism of the identity functor of $\cat$
% and has a unique principal value $z_i$ in $\cat_i$.  That is for any
% $V\in\cat_i$, $\brk{z}_V=z_i$.  Let $\cat_{i^*}$ be the block of
% $\{V^*:V\in\cat_i\}$ and $\cat_{\dag{i}}$ be the block of
% $\{\con V:V\in\cat_i\}$.  Then on has $\theta_{i^*}=\theta_i$.
% %and $\theta_{\dag{i}}=\theta_i^{-1}$.

We will now introduce the notion of a half twist and give a construction of it in our category.  For related ideas, see \cite{ST}.

\newcommand{\dt}{\sqrt\theta}%^{\frac12}}
\begin{defi}
  A half twist $\dt$ for $\cat$ is a natural
  isomorphism of the identity functor whose square is the twist.  That
  is for any $f:V\to W$, $\dt_W f=f\dt_V$,
  $\dt_V^2=\theta_V\in\End_\cat(V)$ and $\dt_{V^*}=\bp{\dt_V}^*$.  We
  also assume that $\dt_\unit=\Id_\unit$.
\end{defi}

\begin{prop}
  Any $\FK$-linear ribbon category over an algebraically closed field
  of characteristic $0$ has a half twist.
\end{prop}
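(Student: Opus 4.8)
The plan is to construct the half twist $\dt$ object by object on indecomposables and then extend by naturality, using the fact that over an algebraically closed field of characteristic $0$ every invertible element has a square root and every endomorphism of an indecomposable is a scalar plus a nilpotent. The key observation is that for an indecomposable object $V$, the twist $\theta_V \in \End_\cat(V)$ has the form $\theta_V = \lambda(\Id_V + N)$ where $\lambda \in \FK^*$ is the scalar part $\brk{\theta_V}_V$ and $N$ is nilpotent. Choosing any square root $\mu$ of $\lambda$ (possible since $\FK$ is algebraically closed), one sets $\dt_V = \mu(\Id_V + N)^{1/2}$ where $(\Id_V + N)^{1/2}$ is defined by the terminating binomial series $\sum_{k\geq 0}\binom{1/2}{k}N^k$ (this is a polynomial in $N$ since $N$ is nilpotent, and the binomial coefficients make sense in characteristic $0$). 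Then $\dt_V^2 = \mu^2(\Id_V + N) = \lambda(\Id_V+N) = \theta_V$ by the standard formal identity, which holds because it is a polynomial identity over $\Q$.

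\textbf{Naturality and the remaining axioms.} Next I would verify that this assignment is natural. For a general object $V = \bigoplus_i V_i$ with $V_i$ indecomposable, define $\dt_V = \bigoplus_i \dt_{V_i}$; one must check independence of the decomposition, which follows because $\dt_V$ is a \emph{universal} function of $\theta_V$: on each indecomposable summand it equals $p(\theta_{V_i})$ for a suitable polynomial $p$ (the choice of square root being locally constant, one can fix $\mu_i$ as a fixed branch depending only on $\lambda_i = \brk{\theta_{V_i}}$). For a morphism $f\maps V \to W$, naturality of $\theta$ gives $\theta_W f = f \theta_V$; since $\dt$ is a limit of polynomials in $\theta$ it commutes past $f$ as well, so $\dt_W f = f \dt_V$. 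For the condition $\dt_{V^*} = (\dt_V)^*$: the twist of a ribbon category satisfies $\theta_{V^*} = (\theta_V)^*$, and since taking the dual (transpose) of a morphism is a ring homomorphism $\End_\cat(V)^{\mathrm{op}} \to \End_\cat(V^*)$, it sends $p(\theta_V)$ to $p(\theta_{V^*})$; one must only ensure the \emph{same} branch of the square root is used for $V$ and $V^*$, which can be arranged since $\brk{\theta_{V^*}} = \brk{\theta_V}$ (the scalar parts agree). Finally $\dt_\unit = \Id_\unit$ because $\theta_\unit = \Id_\unit$ and we may choose the square root $\mu = 1$ on the trivial summand.

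\textbf{The main obstacle.} The genuinely delicate point is the \emph{coherent global choice of square roots}: for each scalar $\lambda$ appearing as $\brk{\theta_V}$ over all objects $V$ of $\cat$ one must pick a square root $\mu(\lambda)$ once and for all, and do so compatibly with duality (so that $\mu(\brk{\theta_{V^*}}) = \mu(\brk{\theta_V})$, which is automatic once one fixes a single function $\mu\maps \FK^* \to \FK^*$ with $\mu(\lambda)^2 = \lambda$) and with the unit (so $\mu(1)=1$). Such a set-theoretic section of the squaring map on $\FK^*$ exists by the axiom of choice; no continuity is required since we work purely algebraically. Once this section is fixed, naturality is formal. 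I expect the write-up to spend most of its effort making precise that "$\dt_V$ depends only on $\theta_V$ through a fixed polynomial-and-scalar recipe," as this is what simultaneously yields well-definedness, naturality, compatibility with $\otimes$ (via $\theta_{V\otimes W} = (\theta_V \otimes \theta_W) c_{W,V} c_{V,W}$, which is central, so its scalar-plus-nilpotent decomposition behaves multiplicatively on the scalar part), and the duality axiom.
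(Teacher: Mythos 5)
Your proposal is correct and takes essentially the same route as the paper: split $\theta_V$ into its scalar part (square-rooted by a fixed global choice) and unipotent part (square-rooted by the terminating binomial series for $(1+N)^{1/2}$), then deduce naturality, well-definedness, and duality-compatibility from the uniformity of the scalar choice. The paper organizes the scalar choice per \emph{block} of $\cat$, picking $\dt_i$ with $\dt_{i^*}=\dt_i$, rather than via a single section $\mu\colon\FK^*\to\FK^*$ as you do, but the two bookkeepings are interchangeable once one notes, as you did, that $\brk{\theta_{V^*}}=\brk{\theta_V}$ renders the duality constraint automatic.
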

\begin{proof}
  Consider a maximal block decomposition $\cat=\bigoplus_i\cat_i$
  where for any $(V,W)\in\cat_i\times \cat_j$,
  $\Hom_\cat(V,W)\neq0\implies i=j$.  Then the twist has a unique
  generalized eigenvalue $\theta_i\in\FK$ on $\cat_i$. That is for any
  $V\in\cat_i$, $\brk{\theta_V}_V=\theta_i$.  One can choose a square
  root $\dt_i$ of $\theta_i$ such that $\dt_{i^*}=\dt_i$ where
  $\cat_{i^*}$ is the block with the dual objects of $\cat_i$. Then a
  half twist is given by the formula:
  $$\forall V\in\cat_i, \dt_V= \dt_i\operatorname{sqr}(\theta_V\theta_i^{-1})$$
  where $\operatorname{sqr}$ is the square root function defined on an
  unipotent $x$ by
  $$\operatorname{sqr}(x)=1-\sum_{k\ge0}\frac{\bp{2k}!(1-x)^{k+1}}{2^{2k+1}k!\bp{k+1}!}.$$
  Note that $\theta_V\theta_i^{-1}$ is unipotent since
  $\theta_V\theta_i^{-1}-\Id_V=\theta_i^{-1}(\theta_V-\brk{\theta_V}\Id_V)$
  is nilpotent.   The naturality follows
  from the uniform choice of $\dt_i$ for all modules of $\cat_i$ and
  the self duality comes from $\theta_{V^*}=\bp{\theta_V}^*$.
\end{proof}
We now fix a half twist by its values on simples in \eqref{halftwistchart}.
Note that for any $k\in\Z$, $i=0,\ldots, r-2$ and $j=r-2-i$, $S_i\otimes\C_{kr}^H$ and $S_j\otimes\C_{(k+1)r}^H$ belong to the
same block since they are both
  simple composition factors in a Jordan-Holder series of
  $P_i\otimes\C_{kr}^H$.
% Note that for $j=r-2-i$, $S_i$ and $S_j\otimes\C_{r}^H$ belong to the
% same block since there are non zero maps
% $S_i\to P_i\to P_j\otimes\C_{r}^H\to S_j\otimes\C_{r}^H$.
\begin{equation} \label{halftwistchart}
\begin{array}{|c|c|c|c|c|}
  \hline
  V&V_\alpha&S_i&S_i\otimes\C_{kr}^H,\,k\in2\Z&S_i\otimes\C_{kr}^H,\,k\in2\Z+1\\
  \hline
  \brk{\theta_V}&q^{\frac{\alpha^2-(r-1)^2}2}&q^{\frac{i(i+2-2r)}2}
                &q^{\frac{i(i+2-2r)}2}&q^{\frac{j(j+2-2r)}2}\\
  \hline
  \brk{\dt_V}&q^{\frac{\alpha^2-(r-1)^2}4}&q^{\frac{i(i+2-2r)}4}
                &q^{\frac{i(i+2-2r)}4}&q^{\frac{j(j+2-2r)}4}\\
  \hline
\end{array}\,\mathsmall{
  \begin{array}{c}
    \alpha\in\Cp,\\i=0,\ldots, r-2\\j=r-2-i
  \end{array}
}
\end{equation}

Using this half twist, we can define an involutive isomorphism
(see also \cite{Wen98}) $\X: (\cat,\otimes) \to (\cat,\otimes^{op})$ in $\cat$ by
\begin{equation} \label{defofX}
\X_{V,W}=\bp{\dt_{W\otimes V}}^{-1}c_{V,W}(\dt_{V}\otimes \dt_{W}).
\end{equation}
Note that $\X$ is not a braiding, but we have the following lemma.
\begin{lemma}\label{L:X}
  For any $V,V',V''\in\cat$ and any morphisms $f,g$, we have the following equalities
  $$\X_{V',V}\X_{V,V'}=\Id,\qquad\X(f\otimes g)=(g\otimes f)\X, $$
  $$\X_{V'\otimes V,V''}(\X_{V,V'}\otimes\Id_{V''})
  =\X_{V,V''\otimes V'}(\Id_{V}\otimes\X_{V',V''}):
  V\otimes V'\otimes V''\to V''\otimes V'\otimes V,$$
  $$\X_{V,V^*} \coev_{V}=\tcoev_V\et  \ev_{V}\X_{V,V^*} =\tev_V \ .$$
  $$$$
\end{lemma}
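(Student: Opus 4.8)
The plan is to deduce all four equalities from the axioms of a ribbon category together with the three defining properties of the half twist recorded in the preceding definition: $\dt$ is a natural automorphism of the identity functor, $\dt_V^2=\theta_V$, $\dt_{V^*}=\bp{\dt_V}^*$, and $\dt_\unit=\Id_\unit$. The one fact I will use repeatedly is that naturality of $\dt$ forces $\dt_X$ to commute with every endomorphism of $X$, to be absorbed on the unit side of each of $\coev_V,\ev_V,\tcoev_V,\tev_V$ (since $\dt_\unit=\Id_\unit$), and to slide through the braiding in the evident way. I stress at the outset that $\dt$ is \emph{not} monoidal, so that $\dt_{V\otimes W}$ and $\dt_V\otimes\dt_W$ are genuinely different operators on $V\otimes W$; the arguments below work only because these two operators commute (one of them being central among the endomorphisms of $V\otimes W$) and because $\dt_{V\otimes W}$ gets absorbed against $\coev$ and $\ev$.

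\emph{Naturality and involutivity.} The identity $\X(f\otimes g)=(g\otimes f)\X$ is immediate from \eqref{defofX}: push $\dt_V\otimes\dt_W$ past $f\otimes g$ by naturality of $\dt$ on each factor, push $c_{V,W}$ past by naturality of the braiding, and push $\bp{\dt_{W\otimes V}}^{-1}$ past by naturality of $\dt$ on $W\otimes V$. For $\X_{V',V}\X_{V,V'}=\Id$, I would expand via \eqref{defofX}, then use naturality of $c$ to pull $\dt_{V'}\otimes\dt_V$ through $c_{V',V}$ and naturality of $\dt$ to pull $\bp{\dt_{V'\otimes V}}^{-1}$ through $c_{V,V'}$, collapsing the composite to $\bp{\dt_{V\otimes V'}}^{-1}\bp{\dt_V\otimes\dt_{V'}}\,c_{V',V}c_{V,V'}\,\bp{\dt_{V\otimes V'}}^{-1}\bp{\dt_V\otimes\dt_{V'}}$. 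Substituting the ribbon identity $c_{V',V}c_{V,V'}=\bp{\theta_V\otimes\theta_{V'}}^{-1}\theta_{V\otimes V'}$, writing $\theta_{V\otimes V'}=\bp{\dt_{V\otimes V'}}^2$ and $\theta_V\otimes\theta_{V'}=\bp{\dt_V\otimes\dt_{V'}}^2$, and cancelling (legitimate since the two $\dt$-operators on $V\otimes V'$ commute) leaves $\Id$.

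\emph{The hexagon identity.} I would expand both sides of the third displayed equation via \eqref{defofX}. Using only the sliding moves above — on the left the cancellation $\bp{\dt_{V'\otimes V}\otimes\dt_{V''}}\bp{\bp{\dt_{V'\otimes V}}^{-1}\otimes\Id_{V''}}=\Id_{V'\otimes V}\otimes\dt_{V''}$ and, symmetrically, on the right $\bp{\dt_V\otimes\dt_{V''\otimes V'}}\bp{\Id_V\otimes\bp{\dt_{V''\otimes V'}}^{-1}}=\dt_V\otimes\Id_{V''\otimes V'}$ — both sides reduce to $\bp{\dt_{V''\otimes V'\otimes V}}^{-1}\,(\ast)\,\bp{\dt_V\otimes\dt_{V'}\otimes\dt_{V''}}$, with $(\ast)=c_{V'\otimes V,V''}\bp{c_{V,V'}\otimes\Id_{V''}}$ on the left and $(\ast)=c_{V,V''\otimes V'}\bp{\Id_V\otimes c_{V',V''}}$ on the right. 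These two expressions for $(\ast)$ agree by the hexagon axioms for $c$: applying one hexagon to each rewrites them as the two sides of the braid relation $\sigma_1\sigma_2\sigma_1=\sigma_2\sigma_1\sigma_2$ on three strands.

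\emph{Compatibility with duality.} Here I would first record, from $\dt_{V^*}=\bp{\dt_V}^*$ and the standard transpose-sliding identities $\bp{\phi\otimes\Id_{V^*}}\coev_V=\bp{\Id_V\otimes\phi^*}\coev_V$ and $\ev_V\bp{\Id_{V^*}\otimes\phi}=\ev_V\bp{\phi^*\otimes\Id_V}$ (valid for any $\phi\in\End_\cat(V)$, by the zigzag identities), the two consequences $\bp{\dt_V\otimes\dt_{V^*}}\coev_V=\bp{\theta_V\otimes\Id_{V^*}}\coev_V$ and $\ev_V\bp{\dt_{V^*}\otimes\dt_V}=\ev_V\bp{\Id_{V^*}\otimes\theta_V}$. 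For $\X_{V,V^*}\coev_V$, substitute the first of these, move $\theta_V$ through $c_{V,V^*}$ by naturality to reach $\bp{\Id_{V^*}\otimes\theta_V}c_{V,V^*}\coev_V=\tcoev_V$ by \eqref{E:d'b'}, and absorb the surviving $\bp{\dt_{V^*\otimes V}}^{-1}$ against $\tcoev_V$ using naturality of $\dt$ and $\dt_\unit=\Id_\unit$. For $\ev_V\X_{V,V^*}$, absorb $\bp{\dt_{V^*\otimes V}}^{-1}$ into $\ev_V$ first, move $\dt_V\otimes\dt_{V^*}$ through $c_{V,V^*}$ to $\bp{\dt_{V^*}\otimes\dt_V}$, apply the second consequence, and move $\theta_V$ back through $c_{V,V^*}$ to land on $\ev_V c_{V,V^*}\bp{\theta_V\otimes\Id_{V^*}}=\tev_V$. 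None of these steps is substantial; the whole proof is careful bookkeeping in a strict ribbon category, and I expect the only real pitfall to be the one flagged at the start — never conflating $\dt_{V\otimes W}$ with $\dt_V\otimes\dt_W$, and tracking precisely which tensor factor each copy of $\dt$ acts on in the hexagon computation.
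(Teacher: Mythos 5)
Your proposal is correct and follows essentially the same route as the paper: expand each side via \eqref{defofX}, slide terms using naturality of $\dt$ and of $c$, replace $\dt^2$ by $\theta$, and close via the standard ribbon identities — the relation $\theta_{V\otimes V'}=(\theta_V\otimes\theta_{V'})c_{V',V}c_{V,V'}$ for involutivity, the hexagon axioms (equivalently the braid relation) for the third identity, and the transpose-sliding and $\tcoev/\tev$ formulas for the duality identity. The only cosmetic difference is that you reduce the two sides of the hexagon equality to their respective forms and then invoke Yang–Baxter explicitly, whereas the paper records a single common form (obtained by applying one hexagon move); the underlying computation is identical.
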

\begin{proof}
In order to prove the first identity, we first recall the following fact
   \begin{equation} \label{crosstwistid}
   \theta_{V\otimes
    V'}=(\theta_{V}\otimes\theta_{V'})c_{V',V}c_{V,V'} \ .
    \end{equation}
We have
  \begin{align*}
    \X_{V',V}\X_{V,V'}
    &=\bp{\dt_{V\otimes V'}}^{-1}c_{V',V}(\dt_{V'}\otimes \dt_{V})
      \bp{\dt_{V'\otimes V}}^{-1}c_{V,V'}(\dt_{V}\otimes \dt_{V'})\\
    &=\bp{\dt_{V\otimes V'}}^{-1}\bp{\dt_{V\otimes V'}}^{-1}c_{V',V}(\dt_{V'}\otimes \dt_{V})
      c_{V,V'}(\dt_{V}\otimes \dt_{V'})\\
    &=\bp{\theta_{V\otimes V'}}^{-1}(\dt_{V}\otimes \dt_{V'})^2c_{V',V}
      c_{V,V'} \\
         &=\bp{\theta_{V\otimes V'}}^{-1}(\theta_{V}\otimes \theta_{V'}) c_{V',V}
      c_{V,V'} \\
      &= \bp{\theta_{V\otimes V'}}^{-1}  \bp{\theta_{V\otimes V'}} \\
      &= \Id_{V \otimes V'} \ .
  \end{align*}
  The second equality follows from the naturality of
  $\dt$. The third equality follows from the naturality of the braiding.
  The fifth equality follows from \eqref{crosstwistid}.

  The second identity of the lemma is also an easy consequence of
  these naturalities.

  For the third identity, we have that both sides are equal to
  \begin{align*}
    % \X_{V'\otimes V,V''}(\X_{V,V'}\otimes\Id_{V''})
    \bp{\dt_{V''\otimes V'\otimes V}}^{-1}(c_{V',V''}\otimes\Id)
    (\Id\otimes c_{V,V''})(c_{V,V'}\otimes\Id)
    (\dt_{V}\otimes \dt_{V'}\otimes \dt_{V''}) \ .
  \end{align*}
  For the fourth identity, we have the following equalities.
  \begin{align*}
    \X_{V,V^*} \coev_{V}
    &=\bp{\dt_{V^*\otimes V}}^{-1}c_{V,V^*}(\dt_{V}\otimes \dt_{V}^*)\coev_{V}\\
    &=c_{V,V^*}(\dt_{V}\otimes \dt_{V}^*) \coev_{V}\bp{\dt_{\unit}}^{-1}\\
    &=c_{V,V^*}(\dt_{V}^2\otimes \Id)\coev_{V}\bp{\dt_{\unit}}^{-1}\\
    &=c_{V,V^*}(\theta_{V}\otimes \Id)\coev_{V}\\
    &= \tcoev_{V} \ .
  \end{align*}
The second equality follows from the naturality of $\bp{\dt_{V^*\otimes V}}^{-1} $.
The third equality comes from moving $\dt_{V}^*$ in the second tensor factor to $\dt_{V} $ in the first factor (which is allowed in a ribbon category).
The final equality is also a standard identity in a ribbon category.
The last identity is proved in a similar way.
\end{proof}
Let $W_1,W_2\in\catHe$ and let $(\cdot,\cdot)_{W_i}$ be the Hermitian
structures on $W_i$.  Define $(\cdot,\cdot)_p$ on $W_1 \otimes W_2$ by
\begin{equation} \label{formpdef}
(x \otimes y , x' \otimes y' )_p = (x, x')_{W_1} (y , y' )_{W_2}
\end{equation}
for
$x,x'\in W_1$ and $y,y'\in W_2$.  The sesquilinear form $(\cdot,\cdot)_p$ is
Hermitian, non-degenerate but not compatible with $\dagger$.
We modify the pairing to obtain a Hermitian form compatible with $\dagger$ in the next result.

\begin{theo}\label{Th:tensor} Let $W_1,W_2\in\catdHe$.  Then $W_1 \otimes W_2$ is a
  Hermitian module with Hermitian structure given by
   $$(v, v') = (v, \tau(\X_{W_1 \otimes W_2} v'))_p.$$
% is Hermitian, non-degenerate and compatible with $\xi$.  It gives
% The form $( , )$ on $W_1 \otimes W_2$ defined by
% $$(v, v') = (v, \X_{W_1 \otimes W_2} v')_p,$$
% is Hermitian, non-degenerate and compatible with $\xi$.  It gives
   This gives $\catdHe$ the structure of a strict $\C$-linear tensor
   category.
\end{theo}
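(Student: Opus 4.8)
The plan is to verify two things: first, that the modified pairing $(v,v') := (v, \tau(\X_{W_1\otimes W_2} v'))_p$ is a Hermitian form compatible with $\dagger$, so that $W_1\otimes W_2$ genuinely lands in $\catdHe$; and second, that with this choice of tensor product on objects (and the usual tensor product on morphisms) $\catdHe$ is a strict $\C$-linear tensor category. I would set up notation by writing $\X = \X_{W_1\otimes W_2}\maps W_1\otimes W_2 \to W_2\otimes W_1$, so $\tau\X\maps W_1\otimes W_2\to W_1\otimes W_2$, and recalling from Lemma~\ref{L:X} the key properties $\X_{V',V}\X_{V,V'}=\Id$, $\X(f\otimes g)=(g\otimes f)\X$, and the hexagon-type identity for triple tensor products.

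First I would check Hermitian symmetry. One needs $(v,v') = \overline{(v',v)}$, i.e. $(v,\tau\X v')_p = \overline{(v',\tau\X v)_p}$. Since $(\cdot,\cdot)_p$ is already Hermitian, $\overline{(v',\tau\X v)_p} = (\tau\X v, v')_p$, so the claim becomes $(v,\tau\X v')_p = (\tau\X v, v')_p$; equivalently $\tau\X$ is self-adjoint for $(\cdot,\cdot)_p$. The adjoint of $\tau\X$ with respect to $(\cdot,\cdot)_p$ is computed using Lemma~\ref{daglemma}: the adjoint of $\tau\maps W_1\otimes W_2\to W_2\otimes W_1$ (between the two orderings, each carrying its product form) is $\tau$ again, and the adjoint of $\X_{W_1,W_2}$ — built from $c_{W_1,W_2}$ and half-twists — is computed from $\dag{\theta}=\theta^{-1}$ (hence $\dt^\dagger = \dt^{-1}$, which I would need to justify from the block-diagonal formula for $\dt$, using that $\theta_V$ is normal and real-unipotent-times-scalar) and from $c^\dagger = c^{-1}$, which in turn is exactly condition (i) of Definition~\ref{def:Hermition} that one must check holds here; this last point follows from the Lemma's identity $(\dagger\otimes\dagger)(R) = \tau(R^{-1})$. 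Assembling these, $(\tau\X)^\dagger = \tau\X$, giving Hermitian symmetry. Non-degeneracy of $(\cdot,\cdot)$ is immediate since $(\cdot,\cdot)_p$ is non-degenerate and $\tau\X$ is invertible.

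Next I would check compatibility with $\dagger$, i.e. $(\dag{x}(v\otimes w), v'\otimes w') = (v\otimes w, x(v'\otimes w'))$ for $x\in\Ubar$. By linearity and the antialgebra property it suffices to check $x = E, F, K, H$. The module action on $W_1\otimes W_2$ uses $\Delta(x)$, and the key input is the Lemma's identity $\Delta(\dag x) = (\dagger\otimes\dagger)(\tau(\Delta x))$: this is precisely the statement that makes $\X$ (which inserts the braiding) intertwine the adjoint relation, because $c_{W_1,W_2}$ is the braiding and converts $\Delta$ into $\tau\circ\Delta$ up to the $R$-matrix twist. Concretely, since $\X$ is a morphism in $\cat$ (natural iso of functors $\otimes\to\otimes^{op}$ by Lemma~\ref{L:X}) it commutes with the $\Ubar$-action in the appropriate twisted sense; combining $\dagger$-compatibility of $(\cdot,\cdot)_{W_i}$ on each factor with the intertwining property of $\X$ and the coalgebra-antimorphism identity yields the claim. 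I expect this to be the main obstacle: carefully tracking which module structure ($W_1\otimes W_2$ versus $W_2\otimes W_1$, and the coproduct versus opposite coproduct) appears at each stage, and confirming the $R$-matrix/half-twist bookkeeping closes up — this is where the hypotheses $(\dagger\otimes\dagger)(R)=\tau(R^{-1})$ and $\dag\theta = \theta^{-1}$ are essential.

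Finally, for the tensor category structure: associativity of $\otimes$ on objects up to the obvious identification is inherited from $\cat$, and I must check that the associativity isomorphism is an isometry for the new forms — this reduces to the triple-product identity for $\X$ in Lemma~\ref{L:X}, which says the two ways of reordering a triple tensor product via $\X$ agree, exactly matching the two ways the form $(\cdot,\cdot)_p$ on a triple product gets modified. Strictness of $\C$-linearity, bilinearity of composition and $\otimes$ on morphisms, and $\End(\unit)=\C\Id$ are all inherited from $\cat$; that $\unit = \C$ (with its evident form) is a Hermitian module with $\X_{\unit,\unit}=\Id$ (using $\dt_\unit = \Id_\unit$) makes it the tensor unit in $\catdHe$. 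I would close by noting that Lemma~\ref{L:indecHe} guarantees every object of $\catd$ already admits \emph{some} Hermitian structure, but the content here is that the tensor product of the chosen structures, after the $\X$-correction, is again compatible, so $\catdHe$ is closed under $\otimes$.
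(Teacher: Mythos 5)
Your reduction of Hermitian symmetry to showing $\tau\X$ is self-adjoint for $(\cdot,\cdot)_p$ is the right reformulation, and your treatment of $\dagger$-compatibility and of associativity via the triple-product identity in Lemma~\ref{L:X} matches the paper. However, the argument you give for self-adjointness of $\tau\X$ has a genuine gap: you assert that the $(\cdot,\cdot)_p$-adjoint of $c_{W_1,W_2}$ is $c_{W_1,W_2}^{-1}$, and that this follows from $(\dagger\otimes\dagger)(R)=\tau(R^{-1})$. That identity instead gives, writing $R^{-1}=\sum a'\otimes b'$, that $c^{\dagger_p}(y\otimes x)=\sum b'.x\otimes a'.y$, whereas $c^{-1}(y\otimes x)=\sum a'.x\otimes b'.y$; these disagree unless $\tau(R^{-1})=R^{-1}$, which fails. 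The identity $c^\dagger=c^{-1}$ does hold, but only for the adjoint with respect to the \emph{corrected} forms $(\cdot,\cdot)$ — that is Proposition~\ref{daggeronc}, which is a consequence of Theorem~\ref{Th:tensor}, not an input to it; using it here would be circular. The same issue infects your claim that $\dt_{W_2\otimes W_1}^\dagger=\dt_{W_2\otimes W_1}^{-1}$ for the product form: the twist on a tensor factor acts by $\Delta(\theta^{-1})$, and its $(\cdot,\cdot)_p$-adjoint is multiplication by $\Delta^{op}(\theta)$ (via $(\dag\otimes\dag)\circ\Delta=\Delta^{op}\circ\dag$), which is not $\Delta(\theta)$.

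What actually makes the computation close is more delicate: the paper first reduces to indecomposable $W_1,W_2$ via Lemma~\ref{L:catHeKS}, then further decomposes $W_1\otimes W_2$ into indecomposable summands $V_i$ paired with the dual summands $V_i'$, using that $\brkk{\dt_{V_i}}=\brkk{\dt_{V_i'}}$ so the scalar part $\dt_0$ is well-defined. It then expands $\X$ through the $\sqr$-formula for the half twist and takes the $(\cdot,\cdot)_p$-adjoint term by term, keeping track of $\Delta$ versus $\Delta^{op}$, and uses $|\theta_0|=|\theta_1|=|\theta_2|=1$, $\dag\theta=\theta^{-1}$, and $(\dag\otimes\dag)(R)=\tau(R^{-1})$. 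After rearranging the $\tau$'s, the result is recognized as $\tau\X_{W_2,W_1}^{-1}=\tau\X_{W_1,W_2}$ by Lemma~\ref{L:X} — it is this $\tau$-rebracketing, not $c^{\dagger_p}=c^{-1}$, that resolves the mismatch. You should either carry out that computation or supply a correct abstract argument; the one you sketched would prove a false intermediate claim.
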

\begin{proof}
  Let $u\in \Ubar$, write $\Delta(u)=u_1\otimes u_2$ and consider
  $v=v_1\otimes v_2\in W_1 \otimes W_2$ and
  $v''=\X_{W_1 \otimes W_2} (v')=v_2''\otimes v_1''\in W_2 \otimes
  W_1$.  Note that we have omitted all summation symbols and will continue to do so throughout the course of the proof.
  Then we have
  $$(v, u.v')=(v, \tau(\X_{W_1 \otimes W_2} u.v'))_p
  =(v, \tau (u.v''))_p=(v_1,u_2v''_1)_{W_1}(v_2,u_1v''_2)_{W_2}$$$$
  =(\dag{u_2}v_1,v''_1)_{W_1}(\dag{u_1}v_2,v''_2)_{W_2} =(\dag{u}.v,
  \tau(v''))_p=(\dag{u}.v,
  v'),$$ where we used that
  $\Delta(\dag{u})=\dag{u_2}\otimes\dag{u_1}$.  Hence we have defined a
  compatible sesquilinear form on $W_1 \otimes W_2$.

  To prove that this form is Hermitian, first note that by Lemma
  \ref{L:catHeKS}, we can assume that $W_1$ and
  $W_2$ are indecomposable.  Then, consider a direct sum decomposition
  $W_1 \otimes W_2=\bigoplus_iV_i$ with
  $V_i$ indecomposable and the dual decomposition $V'_i=\qn{v'\in W_1
    \otimes W_2 | (v',\bigoplus_{j\neq
      i}V_j)=\qn0}$.

  Fix a factor $V_i$. Then
  $(\cdot,\cdot)$ is non-degenerate on $V'_i\times
  V_i$. Hence ${V'_i}^*\simeq \con{V_i}\simeq
  V_i^*$ so in particular, the half twist has the same value
  $\brk{\dt_{V_i}}=\brk{\dt_{V_i'}}$, which we denote by $\dt_0\in\C$. Let us call
  $\dt_1=\brk{\dt_{W_1}}$ and
  $\dt_2=\brk{\dt_{W_2}}$ and $\theta_i=\dt_i^2$. Then let $(v,v')\in
  V_i\times V'_i$ and write $R=\sum a\otimes b$.
 Then
  \begin{align*}
    (v',v)&=(v',\tau\X(v))_p \\
    	&= (v',\tau\dt_{W_2 \otimes W_1}^{-1}\tau R(\dt_{W_1}\otimes\dt_{W_2})(v))\\
          &=\dt_0^{-1}\dt_1\dt_2\bp{v',\sqr(\theta_0\Delta^{op}(\theta))
            R(\sqr(\theta_1^{-1}\theta^{-1})
            \otimes\sqr(\theta_2^{-1}\theta^{-1}))}_p\\
          &=\dt_0^{-1}\dt_1\dt_2\bp{\bp{\sqr(\wb\theta_1^{-1}\dagp{\theta^{-1}})
            \otimes\sqr(\wb\theta_2^{-1}\dagp{\theta^{-1}})}R_{21}^{-1}
            \sqr(\wb\theta_0\Delta(\dag{\theta}))
            )v',v}_p\\
          &=\bp{\dt_0\dt_1^{-1}\dt_2^{-1}\bp{\sqr(\theta_1\theta)
            \otimes\sqr(\theta_2\theta)}R_{21}^{-1}
            \sqr(\theta_0^{-1}\Delta(\theta^{-1}))
            )v',v}_p\\
          &=\bp{\tau(\dt_{W_2}^{-1}\otimes\dt_{W_1}^{-1})R^{-1}\tau
            \dt_{W_1 \otimes W_2}
            )v',v}_p=\bp{\tau\X^{-1}(v'),v}_p\\
          &=\bp{\tau\X(v'),v}_p=\wb{\bp{v,\tau\X(v')}_p}=\wb{\bp{v,v'}}.
  \end{align*}
  In the above equalities, the elements $\theta, \theta^{-1}$, etc., are all acting on modules.
  Note that the fifth equality above follows from the fact that $\theta_0, \theta_1, \theta_2$ all have norm one, and Lemma \ref{daglemma}.
 %Note that $\X_{W_1\otimes W_2}=\X_{W_2\otimes
  %  W_1}^\dagger=\X_{W_2\otimes
   % W_1}^{-1}$, which we abbreviate as $\X=\X^{-1}=\X^\dagger$.
\end{proof}
\begin{prop}
  The Hermitian structure from Theorem \ref{Th:tensor} defines an associative tensor product turning
  $\catdHe$ into a tensor category.  Furthermore, for any two morphisms
  $f,g\in\catdHe$, one has
  $$(f\otimes g)^\dagger=f^\dagger \otimes g^\dagger.$$
\end{prop}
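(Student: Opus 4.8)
The plan is to verify the two assertions of the proposition separately, treating associativity first and then the multiplicativity of $\dagger$ on morphisms.

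For associativity, I would argue that the Hermitian tensor product constructed in Theorem~\ref{Th:tensor} agrees, \emph{as a sesquilinear form}, with the iterated product built using the associativity structure of the underlying category $\cat$. Since $\cat$ is already a strict monoidal category and the form $(\cdot,\cdot)_p$ on a tensor product of several factors is manifestly associative (it is just the product of the forms on the factors, independent of bracketing), it suffices to check that the correction term $\tau \X$ behaves coherently under iterated tensor products. Concretely, for $W_1,W_2,W_3 \in \catdHe$ I would compute the form on $(W_1\otimes W_2)\otimes W_3$ and on $W_1\otimes(W_2\otimes W_3)$ and show both equal $(v,\tau_{\mathrm{full}}(\X^{\mathrm{full}}(v')))_p$ for the same ``total reversal'' operator on $W_3\otimes W_2\otimes W_1$. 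The key input is the hexagon-type identity for $\X$ proved in Lemma~\ref{L:X}, namely
\[
\X_{V'\otimes V,V''}(\X_{V,V'}\otimes\Id_{V''})=\X_{V,V''\otimes V'}(\Id_{V}\otimes\X_{V',V''}),
\]
together with $\dt_\unit=\Id_\unit$ and the fact that $\tau$ (the flip) satisfies the obvious compatibility with iterated tensor factors. Combining these shows the three-fold reversal is well-defined independently of the order in which one applies the two-fold $\X$'s, hence the two bracketings give literally the same Hermitian form; strictness of $\cat$ then upgrades this to strict associativity of $\catdHe$.

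For the second assertion, $(f\otimes g)^\dagger = f^\dagger\otimes g^\dagger$, I would unwind the definition of the adjoint from Lemma~\ref{daglemma}: $f^\dagger$ is characterized by $(v, f(v'))_{W} = (f^\dagger(v), v')_{V}$ for $f\colon V\to W$. So for $f\colon V_1\to W_1$ and $g\colon V_2\to W_2$, I need to check that $f^\dagger\otimes g^\dagger$ satisfies the defining relation of $(f\otimes g)^\dagger$ with respect to the Hermitian forms on $V_1\otimes V_2$ and $W_1\otimes W_2$ given by Theorem~\ref{Th:tensor}. Writing out $(v_1\otimes v_2,\, (f\otimes g)(v_1'\otimes v_2'))$ using $(\cdot,\cdot) = (\cdot,\tau\X(\cdot))_p$, I must move $f\otimes g$ past $\tau\X$; this is exactly the naturality of $\X$ from Lemma~\ref{L:X} ($\X(f\otimes g) = (g\otimes f)\X$), after which the form reduces to a product of two factors $(v_i, \cdot)_{W_i}$ against which $f,g$ can be adjointed individually using $(\cdot,\cdot)_p$'s factorization, landing on $(f^\dagger\otimes g^\dagger)(v_1\otimes v_2)$ paired with $v_1'\otimes v_2'$. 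By non-degeneracy of the Hermitian form on $V_1\otimes V_2$ (established in Theorem~\ref{Th:tensor}) the two morphisms agree, giving the claim.

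I expect the main obstacle to be the associativity verification — specifically, keeping track of the half-twist factors $\dt_{W_i}$, $\dt_{V_i}$ and their reversed counterparts across the three-fold product, since $\X$ is built from $\dt$'s and braidings and the bookkeeping is delicate (this is precisely the kind of computation done in the proof of Theorem~\ref{Th:tensor}, now one tensor-degree higher). The multiplicativity of $\dagger$, by contrast, should be a short diagram chase once the naturality relation $\X(f\otimes g)=(g\otimes f)\X$ is invoked. One subtlety worth flagging: because $\catdHe$ is not abelian (as noted in the remark following Proposition~\ref{prop:form-Pi}), one cannot reduce to simple objects, so the coherence argument must be carried out at the level of the explicit forms rather than via Schur-type arguments; fortunately Lemma~\ref{L:catHeKS} lets us reduce to indecomposables, which is enough for the computation to go through.
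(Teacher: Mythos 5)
Your proposal matches the paper's argument: associativity is shown by exhibiting both bracketings as $\bp{w_1,\tau_{13}\X_{V,V',V''}(w_2)}_t$ for a common three-fold reversal $\X_{V,V',V''}$, well-defined by the hexagon identity of Lemma~\ref{L:X}, and $(f\otimes g)^\dagger=f^\dagger\otimes g^\dagger$ follows from the naturality $\X(f\otimes g)=(g\otimes f)\X$. One small overestimate: you anticipate ``delicate bookkeeping'' with half-twist factors and invoke Lemma~\ref{L:catHeKS} to reduce to indecomposables, but neither is needed here --- Lemma~\ref{L:X} already packages all the $\dt$-bookkeeping, and the associativity computation works for arbitrary objects.
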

\begin{proof}
  To show the associativity of the tensor product of Hermitian
  modules, consider the product Hermitian form on $V\otimes V'\otimes
  V''$ given by
  $$\bp{v_1\otimes v_1'\otimes v_1'',v_2\otimes v_2'\otimes v_2''}_t=
  \bp{v_1,v_2}\bp{v_1',v_2'}\bp{v_1'',v_2''}$$ Then the compatible
  Hermitian forms obtained by iterating Theorem \ref{Th:tensor} are related to
  this pairing by
  $$\bp{w_1,w_2}_{(V\otimes V')\otimes V''}=\bp{w_1,\tau_{13}\X_{V,V',V''}(w_2)}_t
  =\bp{w_1,w_2}_{V\otimes (V'\otimes
    V'')},$$ where $\X_{V,V',V''}=\X_{V'\otimes
    V,V''}(\X_{V,V'}\otimes\Id_{V''}) =\X_{V,V''\otimes
    V'}(\Id_{V}\otimes\X_{V',V''})$ and
  $\tau_{13}$ is the permutation $x\otimes y\otimes z\mapsto z\otimes
  y\otimes
  x$.  This gives the associativity of the tensor product of
  $\catdHe$.  The category also has a strict unit
  $\unit=\C$ with
  $(1,1)_\unit=1$ because $\X_{\unit,V}=\X_{V,\unit}=\Id_V$.

  The last statement is a consequence of the second property
  of Lemma \ref{L:X}.
\end{proof}

\begin{prop}\label{P:dual} If $V\in\catHe$, $V^*$ has a unique Hermitian structure
  determined by $\bp{\vp,\psi}=\sum_i\wb{\vp(e_i)}\psi(e'_i)$ where $(e_i)$
  and $(e'_i)$ are any dual bases of $V$ for the Hermitian form of $V$.
  Furthermore, If $V\in\catdHe$ then one has
  $$\tcoev_V^\dagger=\ev_V\et\tev_V^\dagger=\coev_V.$$
% Recall the set  $\A$ given in Equation \eqref{E:SetA}.   If $V\in \A$  then $V^*$ is a Hermitian $\Ubar$-module determined by....
\end{prop}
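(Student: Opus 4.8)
The plan is to first transport the Hermitian form of $V$ to $V^{*}$ through the canonical antilinear identification $V\cong V^{*}$, check that the result is a $\dagger$-compatible Hermitian form computed by the stated formula, and then, for $V\in\catdHe$, deduce the two adjunction identities by a direct computation with the explicit duality morphisms, using Lemma~\ref{L:X} together with the Hermitian structure on tensor products from Theorem~\ref{Th:tensor}.

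Write $f=(\cdot,\cdot)_{V}$. Non-degeneracy makes $\Phi\colon V\to V^{*}$, $\Phi(v)=f(v,\cdot)$, an antilinear bijection, and from the dual action $(x\cdot\varphi)(u)=\varphi(S(x)u)$ together with compatibility of $f$ one gets $x\cdot\Phi(v)=\Phi(\dag{S(x)}\,v)$, hence $\Phi^{-1}(x\cdot\psi)=\dag{S(x)}\,\Phi^{-1}(\psi)$. I would then set
\[
(\varphi,\psi)_{V^{*}}:=\overline{\varphi\bigl(\Phi^{-1}(\psi)\bigr)}=f\bigl(\Phi^{-1}(\psi),\Phi^{-1}(\varphi)\bigr).
\]
This is sesquilinear, non-degenerate, and Hermitian because $f$ is; and it is $\dagger$-compatible because, using $S(\dag x)=\dagp{S(x)}$, both $(\dag x\cdot\varphi,\psi)_{V^{*}}$ and $(\varphi,x\cdot\psi)_{V^{*}}$ equal $\overline{\varphi(\dag{S(x)}\,\Phi^{-1}(\psi))}$. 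If $(e_{i})$ and $(e'_{i})$ are bases of $V$ with $f(e_{i},e'_{j})=\delta_{ij}$, then $\Phi(e_{i})(e'_{j})=\delta_{ij}$, so $\psi=\sum_{j}\psi(e'_{j})\Phi(e_{j})$, $\Phi^{-1}(\psi)=\sum_{j}\overline{\psi(e'_{j})}\,e_{j}$, and $(\varphi,\psi)_{V^{*}}=\sum_{j}\overline{\varphi(e_{j})}\,\psi(e'_{j})$, which is the formula of the statement; in particular this right-hand side does not depend on the chosen dual pair, $V^{*}\in\catHe$, and $V^{*}\in\catdHe$ whenever $V\in\catdHe$ (as $\catd$ is stable under duals).

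Now let $V\in\catdHe$, so that $V\otimes V^{*}$ and $V^{*}\otimes V$ lie in $\catdHe$ with the Hermitian structures of Theorem~\ref{Th:tensor}. Since $\dagger$ is an involution, it is enough to prove $\tcoev_{V}^{\dagger}=\ev_{V}$ and $\coev_{V}^{\dagger}=\tev_{V}$; then $\tev_{V}^{\dagger}=(\coev_{V}^{\dagger})^{\dagger}=\coev_{V}$ and $\ev_{V}^{\dagger}=\tcoev_{V}$. Unwinding the definition of the adjoint (Lemma~\ref{daglemma}) with $(1,1)_{\unit}=1$, these two equalities are equivalent to $(\varphi\otimes v,\tcoev_{V}(1))_{V^{*}\otimes V}=\overline{\varphi(v)}$ and $(v\otimes\varphi,\coev_{V}(1))_{V\otimes V^{*}}=\overline{\varphi(K^{1-r}v)}$ for all $v\in V$, $\varphi\in V^{*}$. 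Write $\coev_{V}(1)=\sum_{i}v_{i}\otimes v_{i}^{*}$, $\tcoev_{V}(1)=\sum_{i}v_{i}^{*}\otimes K^{r-1}v_{i}$, and put $u_{i}=\Phi^{-1}(v_{i}^{*})$; then $f(u_{i},v_{j})=\delta_{ij}$, $(\varphi,v_{i}^{*})_{V^{*}}=\overline{\varphi(u_{i})}$, and non-degeneracy of $f$ yields the resolution of the identity $\sum_{i}f(v_{i},w)\,u_{i}=w$ for every $w\in V$. By Lemma~\ref{L:X}, $\tcoev_{V}=\X_{V,V^{*}}\coev_{V}$ and $\X_{V^{*},V}\X_{V,V^{*}}=\Id$, so $\tau\,\X_{V^{*},V}(\tcoev_{V}(1))=\sum_{i}v_{i}^{*}\otimes v_{i}$ and $\tau\,\X_{V,V^{*}}(\coev_{V}(1))=\tau(\tcoev_{V}(1))=\sum_{i}K^{r-1}v_{i}\otimes v_{i}^{*}$. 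Feeding these into \eqref{formpdef} and Theorem~\ref{Th:tensor}, the first identity becomes $\sum_{i}\overline{\varphi(u_{i})}\,\overline{f(v_{i},v)}=\overline{\varphi(\sum_{i}f(v_{i},v)u_{i})}=\overline{\varphi(v)}$, and the second becomes $\sum_{i}\overline{f(K^{r-1}v_{i},v)}\,\overline{\varphi(u_{i})}=\overline{\varphi(\sum_{i}f(K^{r-1}v_{i},v)u_{i})}$, which equals $\overline{\varphi(K^{1-r}v)}$ once $f(K^{r-1}v_{i},v)=f(v_{i},K^{1-r}v)$ is substituted (using $\dagger$-compatibility and $\dag K=K^{-1}$) and the resolution of the identity is applied with $w=K^{1-r}v$.

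The content is essentially bookkeeping, and I expect the two delicate points to be: (i) keeping the antilinearity conventions straight, so that the induced form on $V^{*}$ has the correct variance and is genuinely $\dagger$-compatible; and (ii) recognizing that the computation of the tensor Hermitian form collapses, via $\X_{V^{*},V}\X_{V,V^{*}}=\Id$ from Lemma~\ref{L:X}, to evaluating $\tau\X$ on $\coev_{V}(1)$ and $\tcoev_{V}(1)$, after which only the elementary identity $\sum_{i}f(v_{i},w)u_{i}=w$ coming from non-degeneracy of $f$ is needed.
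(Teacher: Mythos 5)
Your proof is correct and follows essentially the same path as the paper's. The main cosmetic difference is that you construct the form on $V^*$ intrinsically by transporting $f$ through the antilinear bijection $\Phi(v)=f(v,\cdot)$, whereas the paper writes out the Gram matrix $B_{ij}=(e_i,e_j)$ and shows that $C=\overline{B}^{-1}$ defines a Hermitian, basis-independent form; the two routes prove the same facts (well-definedness, Hermitian symmetry, $\dagger$-compatibility via $S(\dag x)=\dagp{S(x)}$) and your $\Phi$-based description is a clean way to see basis independence without matrix manipulations. For the duality identities, both arguments reduce $\tau\X$ applied to $\tcoev_V(1)$ (resp.\ $\coev_V(1)$) to $\tau\coev_V(1)$ (resp.\ $\tau\tcoev_V(1)$) via Lemma~\ref{L:X} and then evaluate the product form; the one substantive divergence is that you compute $\coev_V^\dagger=\tev_V$ directly and deduce $\tev_V^\dagger=\coev_V$ by involutivity, while the paper obtains $\tev_V^\dagger=\coev_V$ from $\tcoev_V^\dagger=\ev_V$ by applying $\dagger$ to a zigzag relation. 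Both are valid and of comparable length; the zigzag route saves a second tensor-form computation at the cost of invoking the functoriality of $\dagger$ on compositions, while your route keeps everything at the level of explicit bases and the resolution of identity $\sum_i f(v_i,w)u_i=w$.
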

\begin{proof}
  Fix a basis $e=(e_i)_i$ of $V$ and let $e^*=(e_i^*)_i$ be the dual
  basis of $V^*$.  Let $B_{ij}=\bp{e_i,e_j}$ and let $B'=B^{-1}$.
  Then $e'_i=\sum_j \wb{B'_{ij}}e_j$ since
  $\bp{\sum_j \wb{B'_{ij}}e_j,e_k}=\sum_j
  B'_{ij}\bp{e_j,e_k}=\delta_i^k$. Then the matrix
  $C_{ij}=\bp{e_i^*,e_j^*}$ is given by
  $$C_{ij}=\bp{e_i^*,e_j^*}
  =\sum_k\wb{e_i^*(e_k)}{e_j^*(e'_k)}=e_j^*(e'_i)=\wb{B'_{ij}}.$$ So
  $C=\wb B^{-1}$ is Hermitian.  In particular a change of basis would change $B$
  to $P^*BP$ and $C$ to
  $\bp{^tP^{-1}}^*C\,{^t}P^{-1}=\wb{P}^{-1}\wb
  B^{-1}\,\wb{P^*}^{-1}=\wb{P^*BP}^{-1}$ so it would define the same Hermitian
  form on $V^*$.

  Now for any vector $x$, one has
  $x=\sum_i(e'_i,x)e_i=\bp{(\cdot,x)\otimes\Id}(\sum_ie'_i\otimes e_i)$, so
  $$\bp{(\cdot,x)\otimes\Id}(\sum_ie'_i\otimes
  u.e_i)=u.x=\sum_i(e'_i,u.x)e_i=\sum_i(u^\dagger.e'_i,x)e_i=\bp{(\cdot,x)\otimes\Id}(\sum_iu^\dagger.e'_i\otimes
  e_i).$$ Then
  \begin{equation} \label{eqeeprime}
  \sum_ie'_i\otimes u.e_i=\sum_iu^\dagger.e'_i\otimes e_i
  \end{equation}
  because
  $\bp{\cdot,\cdot}$ is non-degenerate.  Finally,
  $$\bp{u.\vp,\psi}=\bp{\vp(S(u)\cdot),\psi}=\sum_i\wb\vp(S(u).e_i)\psi(e'_i)
  =\sum_i\wb\vp(e_i)\psi(S(u)^\dagger.e'_i)$$
  $$=\bp{\vp,\psi(S(u^\dagger)\cdot)}
  =\bp{\vp,u^\dagger.\psi}, $$
  where in the third equality we used \eqref{eqeeprime}.
  We now compute the Hermitian adjoint of
  $\tcoev_V$. Let $(v,\vp)\in V\times V^*$. Then
  $$\bp{\vp\otimes v,\tcoev_V}_{V^*\otimes V}=\bp{\vp\otimes v,\tau\X\tcoev_V}_p
  =\bp{\vp\otimes v,\tau\coev_V}_p=\sum_i\wb{\bp{e_i^*,\vp}}\bp{v,e_i}$$
  $$=\sum_i\wb{\vp(e_i')}\bp{v,e_i}=\wb{\vp(v)}=\bp{\ev_V(\vp\otimes v),1}_\unit.$$
  Hence $\tcoev_V^\dagger=\ev_V$.  Note that the second equality above follows from Lemma \ref{L:X}.

  Taking the Hermitian adjoint of the zigzag
  $(\tev\otimes\Id)(\Id\otimes\tcoev)=\Id_V$, we get that
  \[
  \Id_V=(\Id_V\otimes\tcoev_V^\dagger)(\tev_V^\dagger\otimes\Id_V)
  =(\Id_V\otimes\ev_V)(\tev_V^\dagger\otimes\Id_V) .
  \]
  Thus $\tev_V^\dagger=\coev_V$.
  % \NG{Need to add proof but I think the proof is
  %   sketched on pages 3-4 in 2021-03-01-Note-16-04.pdf.  I am not sure
  %   what determined by should be yet but it seems it would be good to
  %   come up with a property if possible. }
 \end{proof}

 \begin{prop} \label{daggeronc}
  Let $V,W\in\catHe$, then
  $${c_{V,W}}^\dagger=(c_{V,W})^{-1}\et{\theta_{V}}^\dagger=(\theta_V)^{-1}.
  $$
\end{prop}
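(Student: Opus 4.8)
The plan is to prove the two identities ${c_{V,W}}^\dagger=(c_{V,W})^{-1}$ and ${\theta_{V}}^\dagger=(\theta_V)^{-1}$ by reducing the statement to the algebra-level facts already established in the first Lemma of Section~\ref{s:hrs} and then transporting them along the adjoint functor $\dagger$ of Lemma~\ref{daglemma}, which computes Hom-space adjoints in terms of the Hermitian forms. Since both $c_{V,W}$ and $\theta_V$ are, by construction (the formulas \eqref{eq:R} and the subsequent definition of $\theta$), given by the action of the $R$-matrix (composed with $\tau$) and of the ribbon element $\theta$ on weight modules, I expect to relate $(-)^\dagger$ applied to these operators to the action of $(\dagger\otimes\dagger)(R)$ and $\dagger(\theta)$, and then invoke the Lemma's identities $(\dagger\otimes\dagger)(R)=\tau(R^{-1})$ and $\dag\theta=\theta^{-1}$.

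Concretely, first I would handle $\theta_V$: recall $\theta_V$ is the action of $\theta^{-1}$, so for $v,v'\in V$ in a Hermitian module, $f_V(v,\theta_V v')=f_V(v,\rho_V(\theta^{-1})v')=f_V(\rho_V(\dag{\theta^{-1}})v,v')=f_V(\rho_V(\theta)v,v')$, using compatibility of the form with $\dagger$ and $\dag{\theta}=\theta^{-1}$ (hence $\dag{(\theta^{-1})}=\theta$). Comparing with the defining property $f_V(\theta_V^\dagger v,v')=f_V(v,\theta_V v')$ from Lemma~\ref{daglemma} gives $\theta_V^\dagger=\rho_V(\theta)=(\rho_V(\theta^{-1}))^{-1}=(\theta_V)^{-1}$. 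For the braiding, I would argue similarly: $c_{V,W}=\tau\circ R$ where $R$ acts on $V\otimes W$; then for $x\otimes y\in V\otimes W$ and $x'\otimes y'\in W\otimes V$, use the product Hermitian forms (the compatible ones, via Theorem~\ref{Th:tensor}) — one must check that the adjoint of $\rho_{V\otimes W}(a)$ with respect to the compatible form on $V\otimes W$ is $\rho_{V\otimes W}(\dag a)$, which is exactly the content of compatibility proven in Theorem~\ref{Th:tensor}. Then $c_{V,W}^\dagger$ is computed from $(\dagger\otimes\dagger)(R)=\tau(R^{-1})$ together with the swap $\tau$, and one reads off $c_{V,W}^\dagger = R^{-1}\tau = (\tau R)^{-1}\cdot(\tau R^{-1}\tau)\cdots$; arranging the $\tau$'s correctly yields $(c_{V,W})^{-1}=(\tau R)^{-1}=R^{-1}\tau$, so the two sides match. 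By Lemma~\ref{L:catHeKS} it suffices to treat indecomposable $V,W$, and by bilinearity the computation with the product form $(\cdot,\cdot)_p$ twisted by $\tau\X$ reduces to the algebra identity.

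The main obstacle I anticipate is bookkeeping the interaction between $\tau$ (the flip of tensor factors), the half-twist correction $\X$ built into the compatible Hermitian form on $W_1\otimes W_2$ from Theorem~\ref{Th:tensor}, and the conjugate-transpose in the definition of $f^\dagger$. In particular, the Hermitian form on $V\otimes W$ is not simply $(\cdot,\cdot)_p$ but $(v,\tau\X v')_p$, so computing $c_{V,W}^\dagger$ requires carefully commuting $c_{V,W}$ past $\tau\X$; this is precisely where the identity $\X(f\otimes g)=(g\otimes f)\X$ from Lemma~\ref{L:X}, the relation \eqref{crosstwistid} between $\theta_{V\otimes W}$ and the double braiding, and the half-twist compatibility $\dt_{V^*}=(\dt_V)^*$ will all be needed. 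A cleaner route, which I would actually pursue, is to observe that the computation in the proof of Theorem~\ref{Th:tensor} already essentially shows that the ``square root of the double braiding'' $\X$ satisfies an adjoint identity; differentiating that (or rather, specializing the intermediate lines of that proof, which manipulate $R$, $R_{21}^{-1}$, $\sqr$ of twists, and $\dagger$) should directly yield both $c_{V,W}^\dagger=(c_{V,W})^{-1}$ and $\theta_V^\dagger=(\theta_V)^{-1}$ with almost no new work, since those lines already invoke $(\dagger\otimes\dagger)(R)=\tau(R^{-1})$ and the norm-one property of the twist eigenvalues. So the expected structure is: (1) reduce to indecomposables; (2) recall $\theta_V$ acts by $\theta^{-1}$ and deduce $\theta_V^\dagger=(\theta_V)^{-1}$ from compatibility plus $\dag\theta=\theta^{-1}$; (3) write $c_{V,W}=\tau R$, compute its adjoint against the compatible form using $(\dagger\otimes\dagger)(R)=\tau(R^{-1})$, and match with $(c_{V,W})^{-1}=R^{-1}\tau$; the twist identity \eqref{crosstwistid} closes any leftover discrepancy coming from the $\X$-correction in the tensor Hermitian form.
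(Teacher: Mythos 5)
Your proposal is correct and matches the paper's approach: both statements are reduced to the algebra-level facts $(\dagger\otimes\dagger)(R)=\tau(R^{-1})$ and $\dag\theta=\theta^{-1}$ from the lemma at the start of Section~\ref{s:hrs}, combined with compatibility of the Hermitian forms (the paper's proof being just a two-line appeal to these). Your extra care about the $\tau\X$ twist in the compatible form on tensor products is bookkeeping the paper elides, and your resolution via Lemma~\ref{L:X}, \eqref{crosstwistid}, and the modulus-one property of the half-twist eigenvalues is exactly the right way to close it.
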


\begin{proof}
Taking the Hermitian adjoint of the $R$-matrix in \eqref{eq:R}, and applying a permutation to the tensor factors, one recovers $R^{-1}$ from \eqref{eq:Rinverse}.
The first statement now follows immediately.
The second statement is contained in Lemma \ref{daglemma}.
\end{proof}

   We define the contravariant monoidal antilinear functor
   $\dagger{}: \catdHe\to \catdHe$ as the identity on each object and as the
   Hermitian adjoint on each morphism.  This functor is contravariant,
   monoidal, and antilinear.  We now have the following result.
 \begin{theo}$\catdHe$ is a \emph{Hermitian}
   ribbon category in the sense of Definition~\ref{def:Hermition}.
 \end{theo}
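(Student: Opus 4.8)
The plan is to assemble the results established above. First I would note that $\catdHe$ is indeed a ribbon category: by Lemma~\ref{L:indecHe} every object of $\catd$ carries a Hermitian structure, and by Lemma~\ref{L:catHeKS} we may fix such structures coherently (diagonally on orthogonal direct sums of indecomposables); the tensor product is the one of Theorem~\ref{Th:tensor}, shown to be associative with strict unit $\unit=\C$ in the following Proposition; and since $\catd$ is by construction closed under tensor products and duals, the braiding $c_{V,W}$, the twist $\theta_V$, and the duality morphisms $\coev_V,\ev_V,\tcoev_V,\tev_V$ of $\cat$ all restrict to this full subcategory, so the ribbon structure of $\cat$ is inherited by $\catdHe$.

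Next I would check that the Hermitian adjoint is a conjugation in the sense of Definition~\ref{def:Hermition}: the relation $(g\circ f)^\dagger=f^\dagger\circ g^\dagger$ is Lemma~\ref{daglemma}, the involutivity $(f^\dagger)^\dagger=f$ follows from non-degeneracy of the Hermitian forms together with $f_V(v,v')=\overline{f_V(v',v)}$, and $(f\otimes g)^\dagger=f^\dagger\otimes g^\dagger$ is the Proposition following Theorem~\ref{Th:tensor}. Hence $\dagger$ is an object-preserving contravariant involutive functor, monoidal and antilinear by construction, as required of a conjugation.

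It then remains to verify conditions (i) and (ii). Condition (i), $c_{V,W}^\dagger=(c_{V,W})^{-1}$, and the first equation of (ii), $\theta_V^\dagger=(\theta_V)^{-1}$, are precisely Proposition~\ref{daggeronc}. For the two remaining equations of (ii) I would combine the explicit formulas \eqref{E:d'b'} for the right duality morphisms, $\tev_V=\ev_V c_{V,V^*}(\theta_V\otimes\Id_{V^*})$ and $\tcoev_V=(\Id_{V^*}\otimes\theta_V)c_{V,V^*}\coev_V$, with Proposition~\ref{P:dual}, which gives $\tcoev_V^\dagger=\ev_V$ and $\tev_V^\dagger=\coev_V$; applying $\dagger$ to the latter two identities and using involutivity yields $\ev_V^\dagger=\tcoev_V=(\Id_{V^*}\otimes\theta_V)c_{V,V^*}\coev_V$ and $\coev_V^\dagger=\tev_V=\ev_V c_{V,V^*}(\theta_V\otimes\Id_{V^*})$, which are exactly the desired equations.

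Since all the analytic content has already been extracted in Theorem~\ref{Th:tensor}, Proposition~\ref{P:dual} and Proposition~\ref{daggeronc}, the theorem follows by this bookkeeping; the only point demanding any care is the coherent choice of Hermitian structures on all objects simultaneously, which is guaranteed by Lemmas~\ref{L:indecHe} and~\ref{L:catHeKS}.
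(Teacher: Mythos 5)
Your assembly matches the paper's (implicit) proof exactly: the theorem is stated in the paper without a separate argument precisely because it is a bookkeeping consequence of Theorem~\ref{Th:tensor}, the proposition $(f\otimes g)^\dagger=f^\dagger\otimes g^\dagger$, Proposition~\ref{P:dual}, Proposition~\ref{daggeronc}, and Lemma~\ref{daglemma}, and you combine them in the same way, including the step of applying involutivity to $\tcoev_V^\dagger=\ev_V$, $\tev_V^\dagger=\coev_V$ and then substituting the expressions \eqref{E:d'b'} to obtain the two remaining equations of \eqref{eq:Hermitian+dual}. One small caveat: Lemma~\ref{L:catHeKS} is not really about making a coherent global choice of Hermitian structures (objects of $\catdHe$ already carry a chosen form by definition, and Lemma~\ref{L:indecHe} guarantees such forms exist); its role in the paper is to reduce the Hermitian-symmetry verification inside the proof of Theorem~\ref{Th:tensor} to indecomposables, so your invocation of it here is slightly mislabelled but harmless.
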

% \begin{proof} This is meaning that the duality, braiding and twist
%   defined in Section \ref{SS:UqH} satisfies the following properties:
%   $${c_{V,W}}^\dagger=(c_{V,W})^{-1}, \;\;\;
%   {\theta_{V}}^\dagger=(\theta_V)^{-1},\;\;\;
%   {\coev_V}^\dagger= \tev_{V}, \;\;\; {\ev_V}^\dagger=\tcoev_V.$$
%
% \end{proof}

We conclude this section with some observations about how the Hermitian structure on $\catdHe$ interacts with the modified trace.
 \begin{lemma}\label{L:dagmt}
   Let $(W,(\cdot,\cdot))\in\catdHe$ with $W\in\Proj$. Then for any $f\in\End_\cat(W)$,
   $$\mt_W(f^\dagger)=\wb{\mt_W(f)}.$$
 \end{lemma}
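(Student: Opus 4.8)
The plan is to check that $f\mapsto\overline{\mt_W(f^{\dagger})}$ is again a trace on the projective objects, and then to identify the proportionality constant relating it to $\mt$. For every projective object $W$ of $\catdHe$ and every $f\in\End_\cat(W)$ set $\mt'_W(f):=\overline{\mt_W(f^{\dagger})}$. Since $g\mapsto g^{\dagger}$ is antilinear (Lemma~\ref{daglemma}) and complex conjugation is antilinear, each $\mt'_W$ is $\C$-linear. The conjugation $\dagger$ is the identity on objects, so it preserves $\Proj$; and by Proposition~\ref{P:dual} together with $(\cdot)^{\dagger\dagger}=\Id$ it exchanges the left and right duality morphisms, namely $\tcoev_V^{\dagger}=\ev_V$, $\tev_V^{\dagger}=\coev_V$, $\coev_V^{\dagger}=\tev_V$ and $\ev_V^{\dagger}=\tcoev_V$. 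Applying $\dagger$ to the defining formula \eqref{E:trR} of $\ptr_R$, using $(\phi\circ\psi)^{\dagger}=\psi^{\dagger}\circ\phi^{\dagger}$ and $(\phi\otimes\psi)^{\dagger}=\phi^{\dagger}\otimes\psi^{\dagger}$, one obtains $\ptr_R(f)^{\dagger}=\ptr_R(f^{\dagger})$. Hence
\[
\mt'_{U\otimes W}(f)=\overline{\mt_{U\otimes W}(f^{\dagger})}=\overline{\mt_U\big(\ptr_R(f^{\dagger})\big)}=\overline{\mt_U\big(\ptr_R(f)^{\dagger}\big)}=\mt'_U\big(\ptr_R(f)\big),
\]
which is condition (1) of Definition~\ref{D:trace}, while for $f\maps V\to U$ and $g\maps U\to V$,
\[
\mt'_V(g\circ f)=\overline{\mt_V(f^{\dagger}\circ g^{\dagger})}=\overline{\mt_U(g^{\dagger}\circ f^{\dagger})}=\mt'_U(f\circ g),
\]
which is condition (2). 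Thus $\{\mt'_W\}$ is a trace on the (nonzero) ideal of projective objects of $\catdHe$.

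By the uniqueness, up to a global scalar, of a trace on this ideal (which holds for $\catdHe$ exactly as for $\cat$), there is $\lambda\in\C$ with $\mt'=\lambda\,\mt$. To pin down $\lambda$ it suffices to evaluate on one object on which $\mt$ does not vanish. Take $\alpha\in\R\setminus\Z$; then $V_\alpha\in\A\subset\catdHe$ is simple and projective, $\Id_{V_\alpha}^{\dagger}=\Id_{V_\alpha}$, and
\[
\mt'_{V_\alpha}(\Id)=\overline{\mt_{V_\alpha}(\Id)}=\overline{\md(V_\alpha)}=\overline{\md_0\,\frac{\qn{\alpha}}{\qn{r\alpha}}}.
\]
Since $q^x=e^{\pi\sqrt{-1}x/r}$, we have $\qn{\alpha}=2\sqrt{-1}\sin(\pi\alpha/r)$ and $\qn{r\alpha}=2\sqrt{-1}\sin(\pi\alpha)$, so $\qn{\alpha}/\qn{r\alpha}=\sin(\pi\alpha/r)/\sin(\pi\alpha)$ is real, and it is nonzero because $\alpha\notin r\Z$. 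Together with $\md_0\in\R$ this gives $\md(V_\alpha)\in\R^{*}$, hence $\mt'_{V_\alpha}(\Id)=\md(V_\alpha)=\mt_{V_\alpha}(\Id)$ and $\lambda=1$. Therefore $\overline{\mt_W(f^{\dagger})}=\mt_W(f)$ for all such $W$ and $f$, and conjugating gives $\mt_W(f^{\dagger})=\overline{\mt_W(f)}$.

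I expect the only slightly delicate point to be the bookkeeping in the first paragraph: checking that the duality morphisms transpose correctly under $\dagger$ so that indeed $\ptr_R(f^{\dagger})=\ptr_R(f)^{\dagger}$, and recording that $\dagger$ fixes objects and hence preserves projectivity. Once $\{\mt'_W\}$ has been recognized as a trace, the conclusion is forced by the uniqueness of the modified trace and the reality of $\md(V_\alpha)$.
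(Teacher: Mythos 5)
Your proposal is correct and takes essentially the same route as the paper: recognize $f\mapsto\overline{\mt_W(f^{\dagger})}$ as a modified trace (using Proposition~\ref{P:dual} to commute $\dagger$ past the partial trace), invoke uniqueness of the modified trace up to scalar, and pin down the scalar by the reality of $\md(V_\alpha)$. The paper states these steps more tersely, but the underlying argument is identical.
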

 \begin{proof}
  % We consider the simple module $V_0$ with highest weight $r-1$.
  % Since $W$ is projective, the epimorphism $p=\ev_{V_0}\otimes\Id_W$
   %has a section $i:W\to V_0^*\otimes V_0\otimes W$.  Then
  % $$\mt_W(f^\dagger)=\mt_W(f^\dagger pi)
  % =\mt_{V_0^*\otimes V_0\otimes W}(if^\dagger p)
  % =\mt_{V_0^*\otimes V_0}(\ptr_R(if^\dagger p))
  % $$
  % $$
  % =\mt_{V_0^*\otimes V_0}(\ptr_R(if^\dagger)\ev_{V_0})=\lambda\md(V_0),
  % $$
   %% $$
  % % =\mt_{V_0^*\otimes V_0}((\Id_{V_0^*\otimes V_0}\otimes\tev_W)
   %% (if\otimes\Id_{W^*})\coev_W\ev_{V_0}))=\lambda\md(V_0),
   %% $$
  % where since
  % $\Hom_\cat(\unit,V_0^*\otimes
  % V_0)\simeq\End_\cat(V_0)$, is spanned by $\tcoev_{V_0}$, we can find $\lambda\in\C$ such that
  % $\lambda\tcoev_{V_0}=\ptr_R(if^\dagger)$. Now Proposition
   %\ref{P:dual} implies that $\wb\lambda\ev_{V_0}=\ptr_R(fi^\dagger)$
   %and since $\md(V_0)\in\R$, we have:
   %$$
   %\wb{\lambda\md(V_0)}=\mt_{V_0^*\otimes V_0}(\tcoev_{V_0}\ptr_R(fi^\dagger))
   %=\mt_{V_0^*\otimes V_0}(\ptr_R(p^\dagger fi^\dagger))
   %$$
   %$$
  % =\mt_{V_0^*\otimes V_0\otimes W}(p^\dagger fi^\dagger)
  % =\mt_{W}(fi^\dagger p^\dagger )=\mt_W(f)
   %$$
     By
   Proposition \ref{P:dual} the dagger functor commutes with the
   partial trace of morphisms.  It follows that the family of linear
   maps $\bp{\mt^\dagger_V}_{V\in\Proj}$ given by:
   $$
   \begin{array}{rcl}
     \mt^\dagger_V:\End_\cat(V)&\to&\C\\
     f&\mapsto&\mt^\dagger_V(f):=\wb{\mt_V(f^{\dagger})}
   \end{array}
   $$
   is a modified trace on $\Proj$. By unicity of the modified trace,
   $\mt^\dagger=\lambda \mt$ for some $\lambda\in \C$.  Now for any
   $\alpha\in(\C\setminus \Z)\cup r\Z$,
   $\mt(\Id_{V_\alpha})=\mt^\dagger(\Id_{V_\alpha})=\md(V_\alpha)\in\R$
   since we choose $\md_0\in\R$.
 \end{proof}
\begin{lemma}
  For any objects $V,W$ of $\catdHe$ with $V$ or $W$ projective, the pairing
  $$\brk{f,g} \mapsto \mt_V( f^\dagger g)= \mt_W(g f^\dagger  ):
  \Hom_\catd(V,W)\times \Hom_\catd(V,W)\to \C$$
 (when both make sense) is a non-degenerate Hermitian pairing.
\end{lemma}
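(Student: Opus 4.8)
The plan is to derive the statement entirely from three facts already established in the excerpt: the non-degeneracy of the modified-trace pairing $\brk{\cdot,\cdot}_{V,W}$ (and its symmetry property) recalled after Definition~\ref{D:trace}, the cyclicity axiom \eqref{E:fggf} of the modified trace, and Lemma~\ref{L:dagmt} combined with the identity $(ab)^\dagger=b^\dagger a^\dagger$ from Lemma~\ref{daglemma}. Since $V,W\in\catdHe\subseteq\catHe$ are Hermitian, Lemma~\ref{daglemma} furnishes the antilinear bijection $\dagger\colon\Hom_\catd(V,W)\to\Hom_\catd(W,V)$, so all the composites below make sense in the full subcategory $\catd$.

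First I would check well-definedness and sesquilinearity. If $V$ is projective then $f^\dagger g\in\End_\catd(V)$ and $\mt_V(f^\dagger g)$ is defined; if $W$ is projective then $g f^\dagger\in\End_\catd(W)$ and $\mt_W(g f^\dagger)$ is defined. When both are projective, applying \eqref{E:fggf} to the morphisms $g\colon V\to W$ and $f^\dagger\colon W\to V$ gives $\mt_V(f^\dagger g)=\mt_W(g f^\dagger)$, so the two formulas for $\brk{f,g}$ coincide; this is precisely the clause "when both make sense''. In all cases, linearity of $\mt$, bilinearity of composition, and antilinearity of $\dagger$ on morphisms make $\brk{f,g}$ linear in $g$ and antilinear in $f$.

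Next, for the Hermitian property I would invoke Lemma~\ref{L:dagmt}. If $V$ is projective, then using $(f^\dagger g)^\dagger=g^\dagger(f^\dagger)^\dagger=g^\dagger f$ we get $\wb{\brk{f,g}}=\wb{\mt_V(f^\dagger g)}=\mt_V\bigl((f^\dagger g)^\dagger\bigr)=\mt_V(g^\dagger f)=\brk{g,f}$. If instead $W$ is projective, the same computation with $\mt_W$ and $(g f^\dagger)^\dagger=f g^\dagger$ gives $\wb{\brk{f,g}}=\mt_W(f g^\dagger)=\brk{g,f}$. Either way $\brk{f,g}=\wb{\brk{g,f}}$, which is the Hermitian symmetry in the conventions of the paper.

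Finally, for non-degeneracy, suppose $V$ is projective and $\brk{f,g}=0$ for all $f\in\Hom_\catd(V,W)$. As $f$ runs over $\Hom_\catd(V,W)$, its image $f^\dagger$ runs over all of $\Hom_\catd(W,V)$, so $\mt_V(hg)=\brk{h,g}_{V,W}=0$ for every $h\in\Hom_\catd(W,V)$, whence $g=0$ by non-degeneracy of the modified-trace pairing; non-degeneracy in the other slot then follows from the Hermitian symmetry. The case where $W$ is projective is identical, using $\brk{f,g}=\mt_W(g f^\dagger)$ and the pairing $\brk{\cdot,\cdot}_{W,V}$. I do not anticipate a genuine obstacle: the argument is bookkeeping. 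The only point needing care is the bifurcation into the two cases "$V$ projective'' and "$W$ projective'', verifying in each that the single available formula for $\brk{\cdot,\cdot}$ is sesquilinear, Hermitian, and non-degenerate, and that when both objects are projective the two formulas agree — which is exactly the content of \eqref{E:fggf}.
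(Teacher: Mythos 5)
Your proof is correct and follows essentially the same route as the paper's: non-degeneracy is transferred from the non-degeneracy of the modified-trace pairing via the bijectivity of $\dagger$, and Hermitian symmetry is obtained from Lemma~\ref{L:dagmt} together with $(f^\dagger g)^\dagger=g^\dagger f$. You are somewhat more careful about the two cases ($V$ projective versus $W$ projective) and about invoking \eqref{E:fggf} to reconcile the two formulas when both apply, but the key steps are identical to those in the paper.
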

\begin{proof}
  % From Theorem 5.5 of \cite{GKP3}, we have the pairing
  % $\Hom_\catd(V,W)\times \Hom_\catd(W,V)\to \C$ given by
  % $(f,g)\mapsto \mt_V( g f)=\mt_W( fg )$ is non-degenerate.
  % \marginpar{\%N: Not sure this is the best reference for the trace
  % being non-deg, but it is stated there.}
  Since the mapping $\Hom_\catd(V,W)\to \Hom_\catd(W,V)$ given by
  $f\mapsto f^\dagger$ is an involution, the pairing of the lemma
  is non-degenerate.  The Hermitian symmetry
  follows % as in \cite{Tu}...Lemmas II.5.1.3 and II.5.1.4...
  from
  $$\brk{g,f} =\mt_V(g^\dagger f )=\mt_V((f^\dagger g )^\dagger )=\wb{\mt_V( f^\dagger g)}=\overline{\brk{f,g}}$$
  where the second to last equality follows from Lemma \ref{L:dagmt}.
\end{proof}

\begin{example}
Let $\alpha\in \R$ then
$$\brk{\ev_{V_\alpha}, \ev_{V_\alpha}}=\mt_{V_\alpha}(\Id_{V_\alpha})=\md_0\frac{\sin(\alpha\pi/r)}{\sin( \alpha\pi)}$$ which is not positive for all simples labelled by $(\alpha+1-r,\alpha+3-r,...,\alpha+r-1)$.  These simples all lie in the same graded piece of the category.
\end{example}

\section{Implications for non-semisimple TQFT}
In this section we assume $r\notin4\Z$ (see \cite{BCGP1}) so that
$\cat$ is a relative $\C^*$-modular category.  We will show that the TQFT constructed in \cite{BCGP1} has a Hermitian structure.
%\AL{Maybe focus on Reshetikhin-Turaev rather than Turaev-Viro}

%\subsection{Nonsemisimple TQFT Reshtikhin-Turaev TQFT is Hermtian}
%
%TODO:
%\begin{itemize}
%  \item Define $-M$ following Turaev III Section 5.
%  \item Define what it means for a TQFT to be Hermitian, anything different for non-semisimple?
%  \item  Understand III Theorem 5.3 of Turaev, adapt ideas.
%  \item Show the non-semisimple TQFT for $\Ubar$ is Hermitian.  Should be related to Lemma~\ref{L:dagmt}.
%\end{itemize}
%
%
%\bigskip
%\AL{What questions would we like to know about 6-j symbols?  What is Hermitian mean in Turaev-Viro TQFT?}

%\subsection{Nonsemisimple TQFT is Hermitian}
%
%
%\begin{itemize}
%  \item What is dagger of 6-j symbol? Complex conjugate (up to change of basis)?
%  \item Can we say something in the context of multiplicity space interpretation of 6j-symbols. What is bar action on this.  $F\maps \C \to H_{i,j,k}$, $F^{\dagger} \maps H_{i,j,k}^{\ast} \to \C$?
%  \item What is mirror image of a colored triangulation?
%  \item Establish Hermitian condition, possibly state by state.
%\end{itemize}
\subsection{Hermitian structure on decorated cobordisms}
We follow Turaev \cite[I.II.5.1]{Tu} and define the dagger of a $\catdHe$-colored
ribbon graph $T$ as the following transformation: invert the
orientation in the surface of $T$, reverse directions of bands
and annuli of $T$, exchange bottom and top bases of coupons and
replace the color $f$ of a coupons with $f^\dagger$.  The colors of
edges do not change.

The boundary of a $\catdHe$-colored ribbon graph is a set of
$\catdHe$-colored framed points where a framed point $p=(V,\ve)$ is a
point equipped with a sign $\ve$, a non-zero vector (its framing)
tangent to the surface $T$ but not to the directions of the band, and
a color which is an object $V\in \catdHe$. Let $F(p)=V$ if
$\ve=+$ and $F(p)=V^*$ if $\ve=-$.  The conjugate $\wb p$ of a
$\catdHe$-colored framed point $p$ is obtained by changing the sign
and framing to their opposites.  One easily check that
$\partial (T^\dagger)=\wb{\partial T}$.

Recall the category of decorated cobordism introduced in \cite{BCGP1}.
Here we replace the ribbon category with $\catdHe$ where we restrict to
$\R/2\Z$ valued cohomology classes (instead of $\C/2\Z$) and we only
admit as objects {\em admissible} decorated surfaces defined as follows.
\begin{defi}[Objects of $\Cob$]
  A \emph{decorated surface} is a $4$-tuple $\dSu =(\Su,\{p_i\},\coh,
  {\La})$ where:
\begin{itemize}
\item $\Su$ is a closed, oriented surface which is
  an ordered disjoint union of connected surfaces each having a distinguished base point $*$;
\item $\{p_i\}$ is a finite (possibly empty) set of $\catdHe$-colored
  framed points with framing tangent to the surface $\Su$;
\item $\coh\in H^1(\Su\setminus \{p_1,\ldots, p_k\}, *;\R/2\Z)$ is a
  cohomology class;
\item {\em compatibility condition}: letting $\coh( m_i)=a_i\in\R/2\Z$
  where $m_i$ is a positively oriented circle around $p_i$, then we
  require $F(p_i)\in\catdHe_{a_i}$;
\item ${\La}$ is a Lagrangian subspace of $H_1(\Su;\R)$.
\end{itemize}
In what follows we will restrict our attention to admissible surfaces
that have the additional property that each component of $\dSu$ has
either a $\catdHe$-colored framed point $p$ with $F(p)\in\Proj$ or it
contains a closed curve $\gamma$ such that $\coh(\gamma)\notin\Z/2\Z$.
\end{defi}

\begin{rem}
Note that this restriction is not just a cosmetic
  simplification. The non admissible surfaces considered in
  \cite{BCGP1} would not lead to a Hilbert space valued TQFT.
\end{rem}

\begin{defi}[Morphisms of $\Cob$]\label{D:Cobordisms}
  Let ${\dSu}_\pm=(\Su_\pm,\{p_i^\pm\},\coh_\pm,{\La}_\pm)$ be admissible,
  decorated surfaces.  A \emph{decorated cobordism} from ${\dSu}_-$ to
  ${\dSu}_+$ is a $5$-tuple $\dM=(M,T,f, \coh,n)$ where:
\begin{itemize}
\item $M$ is an oriented $3$-manifold with boundary $\partial M$;
\item $f: \overline{\Su_-}\sqcup \Su_+\to \partial M$ is a
  diffeomorphism preserving the orientation, and the image under
  $f$ of the base points of $\overline{\Su_-}\sqcup \Su_+$ is denoted by $*$;
\item $T$ is a $\catdHe$-colored ribbon graph in $M$ such that
  $\partial T=\{\wb{f(p_i^-)}\}\cup \{f(p_i^+)\}$;
\item $\coh\in H^1(M\setminus T,*;\R/2\Z)$ is a cohomology class
  relative to the base points on $\partial M$, such that the
  restriction of $\coh$ to
  $(\partial M\setminus \partial T)\cap \Su_\pm$ is
  $(f^{-1})^*(\coh_{\pm})$;
\item the coloring of $T$ is compatible with $\coh$, i.e. each
  oriented edge $e$ of $T$ is colored by an object in
  $\catdHe_{\coh(m_e)}$ where $m_e$ is the oriented meridian of $e$;
\item $n$ is an arbitrary integer
  called the {\em signature-defect} of $\dM$.
\end{itemize}
We can summarize the first four items by saying that
$\partial\dM=\dSu_-^*\sqcup \dSu_+$ where the dual of a decorated
surface $\dSu =(\Su,\{p_i\},\coh, {\La})$ is defined to be
$\dSu^*=(\wb\Su ,\wb{\{p_i\}},\coh,{\La})$.

For cobordisms, we only consider admissible, decorated cobordisms that
have the additional property that each component of $M$ contains
either a component of $T$ with a $\Proj$-colored edge or it contains a
closed curve $\gamma$ such that $\coh(\gamma)\notin\Z/2\Z$.  This
condition is automatically satisfied by components of $\dM$ with
non-empty boundary. This restriction is also in \cite{BCGP1}.  Hence
morphisms of $\Cob$ are orientation preserving diffeomorphism classes of
admissible decorated cobordisms.
\end{defi}

\begin{prop}
  Let $\dM=(M,T,f, \coh,n):\dSu_-\to\dSu_+$ be a cobordism in $\Cob$.
  Then the following defines a cobordism $\dM^\dagger:\dSu_+\to\dSu_-$
  in $\Cob$:
  $$\dM^\dagger=(\wb M,T^\dagger,\wb f, \coh,-n),$$
  where $\wb M$ is $M$ with opposite orientation, and
  $\wb f=\overline{\Su_+}\sqcup \Su_-\to \partial \wb M$ is the same
  set-theoretic map as $f$.
  Furthermore, with the above assignment, $\Cob$ is a Hermitian ribbon
  category.
\end{prop}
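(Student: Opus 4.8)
The plan is to establish the statement in three stages: first, that $\dM^\dagger$ is a legitimate morphism of $\Cob$; second, that $\dM\mapsto\dM^\dagger$ is a conjugation, i.e.\ an object-preserving, contravariant, monoidal involution; third, that the three conditions of Definition~\ref{def:Hermition} hold. Throughout we use that $\catdHe$ is a Hermitian ribbon category (the previous Theorem), that $\Proj$ is closed under the duality $V\mapsto V^*$, and the behaviour of the ribbon-graph dagger recorded above, in particular $\partial(T^\dagger)=\wb{\partial T}$.

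For the first stage, reversing the orientation of $M$ makes $\wb f$ an orientation-preserving diffeomorphism $\wb{\Su_+}\sqcup\Su_-\to\partial\wb M$, and a routine check of orientations upgrades $\partial(T^\dagger)=\wb{\partial T}$ to $\partial\dM^\dagger=\dSu_+^*\sqcup\dSu_-$, so that $\dM^\dagger\maps\dSu_+\to\dSu_-$. The relative class $\coh$ is literally unchanged, and the grading compatibility of the coloring of $T^\dagger$ with $\coh$ is preserved: reversing the orientation of $M$ and reversing the direction of a band both act on the preferred orientation of the corresponding meridian, and the net effect leaves in place the requirement that each edge be colored in $\catdHe_{\coh(m_e)}$ (recall $V^*\in\catdHe_{-g}$ when $V\in\catdHe_g$). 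Admissibility is preserved: an edge or framed point colored in $\Proj$ stays colored in $\Proj$ since $\Proj$ is closed under duality, and the alternative condition $\coh(\gamma)\notin\Z/2\Z$ depends only on $\pm\coh(\gamma)$; the Lagrangian data is unaffected, since reversing the orientation of $\Su_\pm$ changes the intersection form by a sign but not which subspaces are Lagrangian.

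For the second stage, $(\dM^\dagger)^\dagger=\dM$ follows from the double orientation reversal, from involutivity of the ribbon-graph dagger (on coupon colors this is $(f^\dagger)^\dagger=f$, and bands/annuli return to their original direction after two reversals), and from $-(-n)=n$. Monoidality $(\dM_1\sqcup\dM_2)^\dagger=\dM_1^\dagger\sqcup\dM_2^\dagger$ is immediate, since each operation defining the dagger is applied componentwise and the signature-defect of a disjoint union is the sum of the signature-defects. The contravariance $(\dM_2\circ\dM_1)^\dagger=\dM_1^\dagger\circ\dM_2^\dagger$ is where care is needed: at the level of decorated $3$-manifolds the orientation reversal of a cobordism glued along a surface is the gluing — in the opposite order, matching the right-hand side — of the orientation-reversed pieces, and the ribbon-graph dagger distributes over cutting along that surface. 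The delicate input is the signature-defect: composition in $\Cob$ adds $n_1+n_2$ together with a correction given by a Maslov index $\mu$ of the relevant Lagrangian subspaces (the Wall non-additivity of the signature), and one must check that $\mu$ changes sign under reversal of orientation, so that $-(n_1+n_2+\mu)$ is exactly the signature-defect prescribed for $\dM_1^\dagger\circ\dM_2^\dagger$. Reconciling the rule $n\mapsto -n$ with this Maslov/signature correction under orientation reversal is, I expect, the main technical obstacle; it should follow from the standard transformation of the signature of a $4$-manifold and of the Maslov index under orientation reversal, exactly as in \cite[Section~III]{Tu}.

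For the third stage, recall that $\Cob$ is a ribbon category (as in \cite{BCGP1}, following \cite{Tu}) whose braiding $c$, twist $\theta$, and duality morphisms are represented by product cobordisms $\Su\times[0,1]$ carrying an elementary $\catdHe$-colored ribbon tangle: a single positive crossing, a single positive framing curl, or a cup/cap pair. The geometric dagger reverses the surface orientation and the band directions, hence sends a positive crossing to a negative crossing and a positive curl to a negative curl; this yields $c_{V,W}^\dagger=(c_{V,W})^{-1}$ and $\theta_V^\dagger=(\theta_V)^{-1}$. For the duality conditions in \eqref{eq:Hermitian+dual}, the dagger of the left coevaluation cobordism is, after this reversal, precisely the cobordism representing $\ev_V\circ c_{V,V^*}\circ(\theta_V\otimes\Id_{V^*})$, which by \eqref{E:d'b'} is $\tev_V$, exactly the required value of $\coev_V^\dagger$; symmetrically $\ev_V^\dagger$ becomes $\tcoev_V$. (Equivalently, once $c^\dagger=c^{-1}$ and $\theta^\dagger=\theta^{-1}$ are known, these two duality identities are forced by the zigzag relations, as in \cite{Tu}.) Combining the three stages shows that $\Cob$, equipped with $\dM\mapsto\dM^\dagger$, is a Hermitian ribbon category in the sense of Definition~\ref{def:Hermition}.
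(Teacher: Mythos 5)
Your first two stages are broadly sound (and in fact more detailed than the paper's one-line treatment), though you do not actually resolve the Maslov-index/Wall-non-additivity issue that you raise for contravariance; the paper simply asserts $(\dM_1\circ\dM_2)^\dagger=\dM_2^\dagger\circ\dM_1^\dagger$ without comment, so neither your write-up nor the paper's spells out the sign flip of the Maslov correction under orientation reversal.

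The genuine error is in your third stage. You describe the braiding and twist of $\Cob$ as cylinders $\Su\times[0,1]$ carrying an elementary ribbon tangle (a positive crossing, a positive curl), and argue $c^\dagger=c^{-1}$, $\theta^\dagger=\theta^{-1}$ by sending these to a negative crossing and a negative curl. That picture confuses the cobordism category $\Cob$ with the ribbon graph category $\Rib_{\cat}$. The objects of $\Cob$ are decorated surfaces, the tensor product is ordered disjoint union, and the braiding $c_{\dSu_1,\dSu_2}$ is the permutation cobordism swapping the two boundary components --- there is no ribbon-graph crossing involved. In particular $\Cob$ is a \emph{symmetric} monoidal category and the twist is the identity cylinder, so the identities $\theta_V^\dagger=(\theta_V)^{-1}$ and $c_{V,W}^\dagger=(c_{V,W})^{-1}$ are immediate from $\theta=\Id$ and symmetry of the braiding, rather than from any crossing-reversal argument. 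The paper's proof points exactly to these two facts ("the braiding in $\Cob$ is symmetric," "the twist is trivial in $\Cob$") and then checks the duality identities in \eqref{eq:Hermitian+dual} by noting that all four duality cobordisms are cylinders $\dSu\times[0,1]$, which are positively diffeomorphic to themselves with opposite orientation via $\Id_\Su\times(t\mapsto 1-t)$. Your conclusion happens to be the correct one, but the mechanism you invoke is not the structure that $\Cob$ actually carries, so as written this stage does not constitute a proof.
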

\begin{proof}
  The first statement follows from the fact that
  $\partial\dM=\dSu_-^*\sqcup\dSu_+$ implies
  $\partial\bp{\dM^\dagger}=\dSu_+^*\sqcup\dSu_-$.  The category
  $\Cob$ has the ordered disjoint union as tensor product and it is
  obvious that
  $(\dM_1\circ\dM_2)^\dagger=\dM_2^\dagger\circ\dM_1^\dagger$, and
  $(\dM_1\sqcup\dM_2)^\dagger=\dM_1^\dagger\sqcup\dM_2^\dagger$.  The
  braiding in $\Cob$ is symmetric. The pivotal structure is given
  by the cylinder $\dSu\times[0,1]$ that gives four morphisms
  $\emptyset\to\dSu\otimes\dSu^*$, $\emptyset\to\dSu^*\otimes\dSu$,
  $\dSu^*\otimes\dSu\to\emptyset$ or $\dSu\otimes\dSu^*\to\emptyset$
  which satisfy the zig-zag relations.  Finally,
  \eqref{eq:Hermitian+dual} follows because the twist is trivial in
  $\Cob$ and the cylinder $\dSu\times[0,1]$ is positively
  diffeomorphic to itself with opposite orientation by the
  diffeomorphism $\Id_\Su\times(t\mapsto1-t)$.
\end{proof}
Note that the last diffeomorphism of cylinders in the proof
generalizes to the non-compact case of a $\catdHe$-colored ribbon
graph $T$ in $\R^2\times[0,1]$.  In this case, the dagger of this
3-manifold is identified via the reflection through the plane
$\R^2\times\qn{\frac12}$ with the graph $T^\dagger$ embedded
upside-down in $\R^2\times[0,1]$ with standard orientation.  This
transformation is compatible with the Reshetikhin-Turaev functor $F$
in the sense that $F(T^\dagger)=F(T)^\dagger$.
\subsection{The $3$-manifold invariant}
If $\dM=(M,T,f, \coh,n)$ is a closed decorated manifold (i.e
$\dM\in\End_\Cob(\emptyset)$) which is connected, a surgery
presentation of $\dM$ is a $\cat$-colored ribbon graph
$T\cup L\subset S^3$ such that $M$ is obtained from $S^3$ by surgery
on $L$ and each component of $L$ is colored by a Kirby color
$\Omega_\alpha=\sum_{k=0}^{r-1}\qd(\alpha+1-r+2k)V_{\alpha+1-r+2k}$ of
degree equal to the value of the cohomology class on its meridian.
Then the invariant of $\dM$ is given by
\begin{equation}
  \label{eq:Zrconnected}
  \Zr(\dM)=\eta\lambda^{m+n}\dep^{-\sigma}F'(L\cup T)
\end{equation}
where $m\in\N$ is the number of components of $L$, $\sigma\in\Z$ is the
signature of the linking matrix $\lk(L)$ and
\begin{equation}
  \label{eq:delta}
  \dep=q^{-\frac32}\e^{-i(s+1)\pi/4}
 \text{ where } s \text{ is in } \{1,2,3\} \text{ with } s \equiv r \text{ mod }4,
\end{equation}
% \begin{equation}
% \mathsf{N}(M,T,\omega)=\frac{F'(L \cup T)}{\Delta_+^p \Delta_-^s}
% \end{equation}

% \JS{Get rid of first case?}
% \begin{equation}
% \Delta_- =
% \begin{cases}
% 0 & \text{ if } r=0 \quad \mathrm{ mod } \quad 4 \\
% i (rq)^{3/2} & \text{ if } r=1 \quad \mathrm{ mod } \quad 4 \\
% (i-1) (rq)^{3/2} & \text{ if } r=2 \quad \mathrm{ mod } \quad 4 \\
% -(rq)^{3/2} & \text{ if } r=3 \quad \mathrm{ mod } \quad 4 .
% \end{cases}
% \end{equation}

\begin{equation}
\lambda= \frac{\sqrt{r'}}{r^2}  , \quad \quad
\eta=\frac{1}{r \sqrt{r'}} .
\end{equation}
The invariant is extended multiplicatively for disjoint unions.
% \begin{equation}
% \Delta_+= \overline{\Delta}_- , \quad \quad
% \lambda= \frac{\sqrt{r'}}{r^2}  , \quad \quad
% \eta=\frac{1}{r \sqrt{r'}} , \quad \quad
% \delta=\lambda \Delta_+ .
% \end{equation}

% \begin{equation}
% \mathsf{Z}(M,T,\omega,n) = \eta \lambda^{b_1(M)} \delta^n \mathsf{N}(M,T,\omega)
% \end{equation}

\begin{lemma}  \label{lem:dagofZ}
Let $\dM$ be a closed decorated manifold then
\[
\mathsf{Z} (\dM^\dagger) =
\overline{\mathsf{Z}(\dM)}
% \mathsf{Z} (\overline{M},\overline{T}, \omega,-n) =
% \overline{\mathsf{Z}(M,T,\omega,n)}
\]
\end{lemma}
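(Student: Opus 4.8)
The plan is to reduce the statement to a surgery presentation and then track how each ingredient in formula~\eqref{eq:Zrconnected} transforms under the dagger. It suffices to treat a connected closed decorated manifold $\dM=(M,T,f,\coh,n)$, since $\Zr$ is multiplicative under disjoint union and $\dagger$ commutes with disjoint union. First I would fix a surgery presentation $T\cup L\subset S^3$ of $\dM$ on a framed link $L$ with $m$ components. A surgery presentation of $\dM^\dagger=(\wb M,T^\dagger,\wb f,\coh,-n)$ is obtained by reflecting the diagram in $S^3$ (equivalently, viewing the whole diagram upside down, as discussed just before this subsection for ribbon graphs in $\R^2\times[0,1]$): this replaces $L$ by its mirror $\wb L$, replaces $T$ by $T^\dagger$, and leaves the cohomology class (hence the Kirby colors $\Omega_\alpha$ on the components of $L$) unchanged, because reversing the orientation of a surgery curve together with reflecting reverses its meridian twice.

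Next I would analyze the three factors in $\Zr(\dM^\dagger)=\eta\lambda^{m+(-n)}\dep^{-\sigma(\wb L)}F'(\wb L\cup T^\dagger)$. For the signature: the linking matrix of the mirror $\wb L$ is $-\lk(L)$, so $\sigma(\wb L)=-\sigma(L)$, hence $\dep^{-\sigma(\wb L)}=\dep^{\sigma(L)}$; since $\dep=q^{-3/2}\e^{-i(s+1)\pi/4}$ has modulus one, $\dep^{\sigma(L)}=\wb{\dep^{-\sigma(L)}}$. For the framing/bookkeeping factor: $\lambda^{m-n}=\lambda^{m+n}\cdot\lambda^{-2n}$; here $\lambda=\sqrt{r'}/r^2$ is real and positive, so $\lambda^{-2n}$ is a positive real scalar, and I would absorb it using the extra $\dep^{-\sigma}$ freedom — more precisely, I would use the standard fact (from \cite{CGP1,BCGP1}) that the combination $\lambda^{m+n}\dep^{-\sigma}$ is exactly calibrated so that the total invariant is independent of the presentation, and that $|\lambda^2\dep^{\text{(something)}}|$ conspires; the cleanest route is to note $n$ enters only through $\lambda^n$ and that the true content is the claim $\lambda^{-2n}$ matches the modulus-one rescaling, so I would instead recall that in \cite{BCGP1} the signature-defect $n$ is precisely introduced so that $\lambda^n\dep^{?}$ combine; rather than belabor this, I would invoke that $\lambda^2 = 1/(\text{positive real})$ and handle $n$ by the observation that $\dM^\dagger$ carries $-n$ while the mirrored surgery contributes a sign flip in $\sigma$, and these are designed to cancel in modulus. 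The key computational input that remains is the functorial identity $F'(\wb L\cup T^\dagger)=\wb{F'(L\cup T)}$.

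The main obstacle, and the heart of the proof, is establishing $F'(\wb L\cup T^\dagger)=\wb{F'(L\cup T)}$, where $F'$ is the modified Reshetikhin--Turaev functor built from the modified trace on $\Proj$. I would prove this in two layers. First, for the underlying Reshetikhin--Turaev functor $F$ of the ribbon category $\catdHe$: the remark at the end of the previous subsection already states $F(T^\dagger)=F(T)^\dagger$ for ribbon graphs in $\R^2\times[0,1]$, and since $F$ takes values in $\C=\End_\cat(\unit)$ on closed diagrams with $\unit^\dagger=\Id_\unit$, I would use that the dagger of a $1\times1$ complex matrix is complex conjugation, together with Proposition~\ref{daggeronc} (which gives $c^\dagger=c^{-1}$, $\theta^\dagger=\theta^{-1}$) and Proposition~\ref{P:dual} (which gives $\coev^\dagger=\tev$ etc.), to conclude $F(D^\dagger)=\wb{F(D)}$ for any closed colored diagram $D$. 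Second, to pass from $F$ to $F'$, I would recall that $F'$ is defined by cutting open one $\Proj$-colored edge of each surgery component, applying $F$ to get an endomorphism of a projective object $V_\alpha$, and applying the modified trace $\mt$; since under the mirror-plus-dagger move the cut-open morphism $\phi\in\End_\cat(V_\alpha)$ becomes $\phi^\dagger$, Lemma~\ref{L:dagmt} ($\mt_W(f^\dagger)=\wb{\mt_W(f)}$) finishes it. Assembling: $\Zr(\dM^\dagger)=\eta\,\lambda^{m+n}\lambda^{-2n}\,\dep^{\sigma}\,\wb{F'(L\cup T)}$, and since $\eta=1/(r\sqrt{r'})$ is real positive and $\lambda^{-2n}\dep^{\sigma}=\wb{\lambda^{-2n}\dep^{-\sigma}}$ would need $\lambda^{-2n}$ real (true) — so modulo the clean handling of the $n$-term, which I expect to follow from the design of the signature-defect in \cite{BCGP1}, one gets $\Zr(\dM^\dagger)=\wb{\Zr(\dM)}$.
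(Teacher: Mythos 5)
Your plan follows the paper's proof closely: present the connected $\dM$ by surgery on $L\cup T\subset S^3$, so that $\dM^\dagger$ is presented by the mirror $\wb L\cup T^\dagger$, note $\sigma(\wb L)=-\sigma(L)$, and reduce everything to the identity $F'(\wb L\cup T^\dagger)=\wb{F'(L\cup T)}$. Your derivation of that identity---via the remark preceding this subsection (that $F(T^\dagger)=F(T)^\dagger$), Propositions~\ref{daggeronc} and~\ref{P:dual}, and then Lemma~\ref{L:dagmt} applied after cutting a $\Proj$-colored edge---correctly unpacks what the paper's proof compresses into the single sentence ``it follows from the conjugation on the underlying Hermitian category,'' and it is the right way to justify that step.

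The genuine gap is your treatment of the signature-defect $n$. You observe, correctly, that reading~\eqref{eq:Zrconnected} literally gives $\Zr(\dM^\dagger)=\lambda^{-2n}\,\wb{\Zr(\dM)}$, and then you wave away the residual factor as something that is ``designed to cancel in modulus.'' That is not an argument, and as stated it is false: $\lambda=\sqrt{r'}/r^2$ is real, positive, and $\neq1$, so $\lambda^{-2n}\neq1$ whenever $n\neq0$. The actual resolution is that the displayed formula attaches $n$ to the wrong coefficient: in~\cite{BCGP1} the signature-defect enters through the modulus-one anomaly factor, so the invariant of the connected closed piece is $\eta\,\lambda^{m}\,\dep^{\,n-\sigma}F'(L\cup T)$. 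With that form, $\dM^\dagger$ carrying $-n$ contributes $\dep^{-n}=\wb{\dep^{\,n}}$, and the comparison with $\wb{\Zr(\dM)}$ closes immediately---this is exactly what the paper's final sentence (``$\lambda$ and $\eta$ are real and the modulus of $\dep$ is $1$'') is tacitly invoking, since it leaves only $\dep$-powers to carry the $n$- and $\sigma$-dependence. You should state this correction explicitly rather than defer to the ``design'' of $n$; as written, your argument does not reach the conclusion when $n\neq0$.
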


\begin{proof}
  If a framed link $L$ gives rise via surgery to the manifold $M$,
  then the manifold $\overline{M}$ may be constructed from
  $\overline{L}$ where $\overline{L}$ is the mirror image of $L$.  It
  follows from the conjugation on the underlying Hermitian category
  that $F'(\overline{L}\cup T^\dagger)=\overline{F'(L\cup T)}$.

  If the linking matrix of $L$ has signature $\sigma$, then the
  signature of the linking matrix of $\overline{L}$ is $-\sigma$.
  Since $\lambda$ and $\eta$ are real and the modulus of $\delta$ is
  $1$, the lemma follows.
  % If the linking matrix of $L$ has signature $(p,s)$, then the
  % signature of the linking matrix of $\overline{L}$ is $(s,p)$.  Since
  % $\overline{\Delta}_+=\Delta_-$, and $\lambda$ and $\eta$ are real
  % and the modulus of $\delta$ is $1$, the lemma follows.
\end{proof}

\begin{cor} \label{cor:Hersymcob}
  The pairing $\Cob(\emptyset,\dSu)\times\Cob(\emptyset,\dSu)\to\C$, given by
  $$(\dM_1,\dM_2)\mapsto \mathsf{Z} (\dM_1^\dagger\circ\dM_2)\in\C$$
  has Hermitian symmetry.
\end{cor}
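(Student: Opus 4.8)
The plan is to deduce the statement as a short formal consequence of Lemma~\ref{lem:dagofZ} together with the fact, established in the preceding proposition, that $\dagger$ is a contravariant involution on $\Cob$. Recall that Hermitian symmetry of the pairing means precisely that
\[
\mathsf{Z}(\dM_1^\dagger\circ\dM_2)=\overline{\mathsf{Z}(\dM_2^\dagger\circ\dM_1)}
\]
for all $\dM_1,\dM_2\in\Cob(\emptyset,\dSu)$, so it suffices to prove this identity.

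First I would check that the composites in question are legitimate closed decorated cobordisms on which $\mathsf{Z}$ is defined. Since $\dM_2\colon\emptyset\to\dSu$ and, by the preceding proposition, $\dM_1^\dagger\colon\dSu\to\emptyset$ is again an admissible decorated cobordism in $\Cob$, the composition $\dM_1^\dagger\circ\dM_2$ lies in $\End_\Cob(\emptyset)$, and similarly for $\dM_2^\dagger\circ\dM_1$; hence both are closed decorated manifolds in the sense of Lemma~\ref{lem:dagofZ}. Next, using that $(\dM_a\circ\dM_b)^\dagger=\dM_b^\dagger\circ\dM_a^\dagger$ and that $(\dM^\dagger)^\dagger=\dM$ — the latter because reversing the orientation of $M$, replacing $T$ by $T^\dagger$, and negating the signature-defect $n$ are each involutions — I obtain
\[
(\dM_2^\dagger\circ\dM_1)^\dagger=\dM_1^\dagger\circ(\dM_2^\dagger)^\dagger=\dM_1^\dagger\circ\dM_2 .
\]
Applying Lemma~\ref{lem:dagofZ} to the closed decorated manifold $\dM_2^\dagger\circ\dM_1$ then gives
\[
\mathsf{Z}(\dM_1^\dagger\circ\dM_2)=\mathsf{Z}\bigl((\dM_2^\dagger\circ\dM_1)^\dagger\bigr)=\overline{\mathsf{Z}(\dM_2^\dagger\circ\dM_1)},
\]
which is exactly the asserted Hermitian symmetry.

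Since the proof is essentially a two-line manipulation, there is no substantive obstacle; the only points deserving (minor) care are that $(\dM^\dagger)^\dagger=\dM$ holds at the level of orientation-preserving diffeomorphism classes of admissible decorated cobordisms — immediate from the definition of $\dM^\dagger$ — and that admissibility of $\dM$ passes to $\dM^\dagger$, which was already verified when $\Cob$ was shown to be a Hermitian ribbon category.
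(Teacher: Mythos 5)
Your proof is correct and is exactly the intended argument: the paper's own proof is just the one-line remark that the corollary ``follows directly from Lemma~\ref{lem:dagofZ},'' and you have simply spelled out the two-line computation $(\dM_2^\dagger\circ\dM_1)^\dagger=\dM_1^\dagger\circ\dM_2$ followed by an application of that lemma. No discrepancy.
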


\begin{proof}
This follows directly from Lemma \ref{lem:dagofZ}.
\end{proof}

\subsection{The $(2+1)$-TQFT}
Recall $V_0$ is the simple projective module with highest weight $r-1$ and $\C^H_{kr}$ (for $k \in \Z$) are the one-dimensional modules described in Section \ref{S:QUantSL2H}. Let $\hS_k$ be the decorated sphere defined in \cite{BCGP1} colored with points $U$, where
\begin{equation}
U =
\begin{cases}
((V_0,1),(\C^H_{kr},1),(V_0,-1)) & \text{ if } k \neq 0, \\
((V_0,1),(V_0,-1)) & \text{ if } k=0. \\
\end{cases}
\end{equation}
Here all modules are enhanced with their preferred Hermitian structure
(see Section \ref{s:hrs}).

Now we define the state space associated to a decorated surface in the following way:
$$\V(\dSu)=\Span_\C\qn{\Hom_{\Cob}(\emptyset,\dSu)}/K_{\dSu}$$
where $K_{\dSu}$ is the right kernel of the bilinear pairing given on
generators
$$
\begin{array}{ccl}
  \Span_\C\qn{{\Cob}(\dSu,\emptyset)}\otimes
  \Span_\C\qn{{\Cob}(\emptyset,\dSu)}&\to&\C\\ {}
  [{\dM_1}]\otimes[\dM_2]&\mapsto& \mathsf{Z} (\dM_1\circ\dM_2)
\end{array}
$$
$$\VV(\dSu)=\bigoplus_{k\in\Z}\VV_k(\dSu)\text{ where }\VV_k(\dSu)=\V(\dSu\sqcup\hS_k) .$$

These state spaces are part of a $(2+1)$-TQFT constructed in \cite{BCGP1}.  Since $\dagger:{\Cob}(\emptyset,\dSu)\to {\Cob}(\dSu,\emptyset)$ is bijective, the pairing described in Corollary \ref{cor:Hersymcob} descends to a non-degenerate Hermitian pairing on the state spaces $ \VV(\dSu)$ of the TQFT.
For details, see \cite[Proposition 4.28, Definition 5.3]{BCGP1}.

\begin{theo}
The TQFT $(\VV,\mathsf{Z}) $ is Hermitian.
More specifically, for any decorated surface $\dSu$, there is a non-degenerate Hermitian pairing
\[
\langle \cdot, \cdot \rangle_{\VV(\dSu)}
\]
and
for any $y \in \VV(\dSu_-)$ and for any $x \in \VV(\dSu_+)$, and
for any decorated cobordism $\dM : \dSu_- \to \dSu_+$ between decorated surfaces, there is an equality
\begin{equation} \label{adjofsurface}
\brk{ x,\VV(\dM)(y) }_{\VV(\dSu_+)}
=
\brk{  \VV(\dM^\dagger)(x),y }_{\VV(\dSu_-)}
.
\end{equation}
\end{theo}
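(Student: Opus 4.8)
The strategy is to reduce the Hermicity of the TQFT to the dagger structure on $\Cob$ together with Lemma~\ref{lem:dagofZ}, so that essentially all of the work is already done at the level of the $3$-manifold invariant $\mathsf{Z}$. Recall that $\VV(\dSu)=\bigoplus_{k\in\Z}\VV_k(\dSu)$ with $\VV_k(\dSu)=\V(\dSu\sqcup\hS_k)$, and that each $\V(\dSu')$ is a quotient of $\Span_\C\{\Cob(\emptyset,\dSu')\}$ by the right kernel $K_{\dSu'}$ of the bilinear pairing $([\dM_1],[\dM_2])\mapsto\mathsf{Z}(\dM_1\circ\dM_2)$; note that the spheres $\hS_k$ are admissible decorated surfaces in our restricted category since their colors carry the preferred Hermitian structures of Section~\ref{s:hrs}. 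Extending the conjugation $\dagger$ antilinearly to $\Span_\C\{\Cob(\emptyset,\dSu')\}$, I would set, on representatives,
\[
\langle [\dM_1],[\dM_2]\rangle_{\V(\dSu')}:=\mathsf{Z}(\dM_1^\dagger\circ\dM_2),
\]
and then define $\langle\cdot,\cdot\rangle_{\VV(\dSu)}$ to be the orthogonal direct sum of the forms $\langle\cdot,\cdot\rangle_{\V(\dSu\sqcup\hS_k)}$ over $k\in\Z$.

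First I would verify that $\langle\cdot,\cdot\rangle_{\V(\dSu')}$ is a well-defined, non-degenerate Hermitian form on the quotient $\V(\dSu')$; this is exactly the content of the paragraph preceding the statement. Since $\dagger\colon\Cob(\emptyset,\dSu')\to\Cob(\dSu',\emptyset)$ is an antilinear bijection, the right radical of $\langle\cdot,\cdot\rangle$ on $\Span_\C\{\Cob(\emptyset,\dSu')\}$ is precisely $K_{\dSu'}$; Hermitian symmetry is Corollary~\ref{cor:Hersymcob} (equivalently, $\overline{\mathsf{Z}(\dM_1^\dagger\circ\dM_2)}=\mathsf{Z}\big((\dM_1^\dagger\circ\dM_2)^\dagger\big)=\mathsf{Z}(\dM_2^\dagger\circ\dM_1)$, using Lemma~\ref{lem:dagofZ} and $(\dM^\dagger)^\dagger=\dM$), so the left radical is also $K_{\dSu'}$ and the form descends to a non-degenerate Hermitian form on the quotient. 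Taking the direct sum over $k$ then yields the asserted pairing $\langle\cdot,\cdot\rangle_{\VV(\dSu)}$.

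Next I would establish the adjunction \eqref{adjofsurface}. Recall that $\VV(\dM)$ is induced on representatives by composition with $\dM$, i.e.\ $[\dM']\mapsto[\dM\circ\dM']$ (acting block-wise on each $\VV_k$ via $\dM\sqcup\Id_{\hS_k}$, using $\Id_{\hS_k}^\dagger=\Id_{\hS_k}$), and that this is well defined on the quotients by \cite{BCGP1}. Writing $x=[\dM_x]$ with $\dM_x\colon\emptyset\to\dSu_+$ and $y=[\dM_y]$ with $\dM_y\colon\emptyset\to\dSu_-$, and using that $\Cob$ is a dagger category (so $(\dM^\dagger\circ\dM_x)^\dagger=\dM_x^\dagger\circ(\dM^\dagger)^\dagger=\dM_x^\dagger\circ\dM$) together with associativity of composition, one computes
\begin{align*}
\langle\VV(\dM^\dagger)(x),y\rangle_{\VV(\dSu_-)}
&=\mathsf{Z}\big((\dM^\dagger\circ\dM_x)^\dagger\circ\dM_y\big)
=\mathsf{Z}\big(\dM_x^\dagger\circ\dM\circ\dM_y\big)\\
&=\mathsf{Z}\big(\dM_x^\dagger\circ(\dM\circ\dM_y)\big)
=\langle x,\VV(\dM)(y)\rangle_{\VV(\dSu_+)},
\end{align*}
and the general case follows by extending antilinearly in $x$ and linearly in $y$. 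Thus, once the pairing is in place, the adjunction is purely formal.

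The main obstacle is not this formal manipulation but the bookkeeping of the first step: one must check carefully that the abstract pairing and the map $\VV(\dM)$ agree with the explicit universal construction of \cite[Sections~4--5]{BCGP1} — in particular that $\dM\circ K_{\dSu_-}\subseteq K_{\dSu_+}$, that the grading by the spheres $\hS_k$ is compatible with $\dagger$, and that the normalizing scalars in $\mathsf{Z}$ (the factors $\eta,\lambda,\delta$ of \eqref{eq:Zrconnected}, with $\eta,\lambda\in\R$ and $|\delta|=1$) conjugate as needed — all of which is already encapsulated in Lemma~\ref{lem:dagofZ}. Once these facts are invoked, nothing further is required.
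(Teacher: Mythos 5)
Your proposal is correct and follows essentially the same route as the paper: the non-degenerate Hermitian pairing is obtained exactly as in the paragraph preceding the theorem (using the bijection $\dagger\colon\Cob(\emptyset,\dSu)\to\Cob(\dSu,\emptyset)$ and Corollary~\ref{cor:Hersymcob}), and the adjunction~\eqref{adjofsurface} is derived from functoriality of $\VV$ together with Lemma~\ref{lem:dagofZ}. You merely make explicit the short dagger-category computation that the paper delegates to \cite[Theorem III.5.3]{Tu}, which is a welcome but not a different argument.
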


\begin{proof}
The fact that the pairing is non-degenerate and Hermitian follows from the discussion above.

The equality \eqref{adjofsurface} follows from the functoriality of the TQFT and Lemma \ref{lem:dagofZ}.
The details follow as in \cite[Theorem III.5.3]{Tu}.
\end{proof}

The mapping class group action in $\Cob$ is given through mapping
cylinders: if $f:\dSu_1\to\dSu_2$ is a diffeomorphism, the mapping
cylinder of $f$ is the decorated cobordism from $\Su_1$ to $\Su_2$
given by
$M_f=(\Su_2\times[0,1],\{p_i^2\}\times[0,1],f\times\{0\}\sqcup
\Id\times\{1\},\pi^*(\coh_2),0)$.  The mapping cylinder construction
is functorial: $M_f\circ M_g=M_{fg}$.
\begin{prop} \label{prop:mapping}
  If $f$, $M_f$ are as above, $M_f^\dagger=M_{f^{-1}}$. In particular
  the TQFT $\VV$ induces projective representations of the mapping
  class group in the group of indefinite unitary matrices.
\end{prop}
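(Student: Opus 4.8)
The plan is first to verify the identity $M_f^\dagger=M_{f^{-1}}$ at the level of decorated cobordisms, and then to combine it with equation~\eqref{adjofsurface} to recognize each operator $\VV(M_f)$ as an isometry of the indefinite Hermitian pairing on $\VV(\dSu)$. Recall that a diffeomorphism $f\colon\dSu_1\to\dSu_2$ of decorated surfaces preserves orientations, base points, the colored framed points, and the cohomology class; in particular $f(p_i^1)=p_i^2$ (with matching signs, framings, and colors) and $f^*\coh_2=\coh_1$. Unwinding the definition of the dagger of a cobordism, and using $-0=0$, we have
\[
M_f^\dagger=\bigl(\wb{\Su_2\times[0,1]},\ (\{p_i^2\}\times[0,1])^\dagger,\ \wb{f\times\{0\}\sqcup\Id\times\{1\}},\ \pi^*\coh_2,\ 0\bigr).
\]
I would then exhibit the diffeomorphism $\phi\colon\wb{\Su_2\times[0,1]}\to\Su_1\times[0,1]$ given by $(x,t)\mapsto(f^{-1}(x),1-t)$. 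It is orientation preserving: the reflection $t\mapsto1-t$ reverses the orientation of $\Su_2\times[0,1]$, cancelling the bar, while $f^{-1}\colon\Su_2\to\Su_1$ is orientation preserving.

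Next I would check that $\phi$ carries all the remaining data of $M_f^\dagger$ onto those of $M_{f^{-1}}$, the latter being the cylinder $\Su_1\times[0,1]$ with vertical bands over $\{p_i^1\}$, parametrization $f^{-1}\times\{0\}\sqcup\Id\times\{1\}$, cohomology class $\pi^*\coh_1$, and signature-defect $0$. The set $\{p_i^2\}\times[0,1]$ is sent by $\phi$ to $\{f^{-1}(p_i^2)\}\times[0,1]=\{p_i^1\}\times[0,1]$, and the composition of the dagger on the vertical bands---which reverses band directions and framings and swaps the bottom and top bases of each band---with the reflection in the $[0,1]$ factor restores the upward vertical bands of $M_{f^{-1}}$; the colors are edge colors, hence unchanged. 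The cohomology class $\pi^*\coh_2$ is transported by $\phi$ to $\pi^*f^*\coh_2=\pi^*\coh_1$. Finally $\phi$ interchanges the $t=0$ and $t=1$ ends of the cylinder, and a short diagram chase, using that $\phi$ restricts to $x\mapsto f^{-1}(x)$ on each end, shows that $\wb{f\times\{0\}\sqcup\Id\times\{1\}}$ transported along $\phi$ is exactly $f^{-1}\times\{0\}\sqcup\Id\times\{1\}$. Hence $M_f^\dagger=M_{f^{-1}}$ in $\Cob$.

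For the representation statement, functoriality of the mapping cylinder gives $M_{f^{-1}}\circ M_f=M_{f^{-1}f}=M_{\Id_{\Su}}$, which $\VV$ sends to the identity of $\VV(\dSu)$; thus $\VV(M_f)$ is invertible with $\VV(M_f)^{-1}=\VV(M_{f^{-1}})$. Applying equation~\eqref{adjofsurface} with $\dSu_-=\dSu_+=\dSu$, together with $M_f^\dagger=M_{f^{-1}}$, we obtain for all $x,y\in\VV(\dSu)$ that
\[
\brk{x,\VV(M_f)(y)}=\brk{\VV(M_{f^{-1}})(x),y}=\brk{\VV(M_f)^{-1}(x),y},
\]
so $\VV(M_f)$ preserves the pairing $\brk{\cdot,\cdot}_{\VV(\dSu)}$, i.e.\ it lies in the indefinite unitary group $\mathsf{U}(p,q)$ attached to the signature $(p,q)$ of this Hermitian form. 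Since, as established in \cite{BCGP1}, the assignment $f\mapsto\VV(M_f)$ only descends to a \emph{projective} representation of $\mcg(\Su)$---the scalar ambiguity coming from the choice of Lagrangian and the signature-defect---we conclude that $\VV$ induces a projective representation of the mapping class group in $\mathsf{U}(p,q)$. The main obstacle is purely one of bookkeeping: tracking orientations, framings, and the source/target roles of the two ends of the cylinder so that Turaev's dagger conventions align with the mapping-cylinder conventions; all the analytic content is already supplied by the Hermicity of $\VV$ proved above.
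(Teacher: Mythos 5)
Your proof is correct and takes essentially the same approach as the paper: the paper's proof is the single sentence exhibiting the diffeomorphism $f\times(t\mapsto1-t)\colon M_{f^{-1}}\to M_f^\dagger$ (the inverse of your $\phi$), and leaves the unitarity consequence implicit. Your more detailed verification of the cobordism data and the explicit derivation of $\brk{x,\VV(M_f)y}=\brk{\VV(M_f)^{-1}x,y}$ from \eqref{adjofsurface} merely spell out what the paper compresses.
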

\begin{proof}
 The map
 $f\times (t\mapsto1-t)%\Id_{[0,1]}
 :M_{f^{-1}}\to M_f^\dagger$ is an isomorphism, and thus the
  two cobordisms are equal in $\Cob$.
\end{proof}

% ==============================================================================
% REFERENCES
%
%\bibliographystyle{amsplain}
%\bibliography{bib_unitary}
\providecommand{\bysame}{\leavevmode\hbox to3em{\hrulefill}\thinspace}
\providecommand{\MR}{\relax\ifhmode\unskip\space\fi MR }
% \MRhref is called by the amsart/book/proc definition of \MR.
\providecommand{\MRhref}[2]{%
  \href{http://www.ams.org/mathscinet-getitem?mr=#1}{#2}
}
\providecommand{\href}[2]{#2}

\end{document}